\newfont{\cyrr}{wncyr10}
\newcommand{\N}{{\mathbb N}}
\newcommand{\Z}{{\mathbb Z}}
\newcommand{\R}{{\mathbb R}}
\newcommand{\C}{{\mathbb C}}
\renewcommand{\mod}{{\, \rm mod \, }}
\newtheorem{thm}{Theorem}
\newtheorem{lem}[thm]{Lemma}
\newtheorem{cor}[thm]{Corollary}
\newtheorem{prop}[thm]{Proposition}
\begin{document}

\title[Sign changes]{Sign changes of Fourier coefficients
of Siegel cusp forms of degree two on 
Hecke congruence subgroups}

\author{S. Gun and J. Sengupta}

\address[S. Gun]   
{Institute of Mathematical Sciences, 
HBNI,
C.I.T Campus, Taramani, 
Chennai  600 113, 
India.}
\email{sanoli@imsc.res.in}

\address[J. Sengupta]   
{School of Mathematics,
Tata institute of Fundamental research, 
Homi Bhabha Road, Mumbai 400 005, 
India.}
\email{sengupta@math.tifr.res.in}

\subjclass[2010]{11F30, 11F46, 11F50}

\keywords{Siegel cusp form, Jacobi expansion,
Fourier expansion, sign change}

\maketitle

\begin{abstract} 
In this article, we give a lower bound on the number
of sign changes of Fourier coefficients of a non-zero
degree two Siegel cusp form of even integral weight
on a Hecke congruence subgroup. We
also provide an explicit upper
bound for the first sign change of Fourier coefficients
of such Siegel cusp forms. Explicit upper
bound on the first sign change of Fourier
coefficients of a non-zero Siegel cusp form of even
integral weight on
the Siegel modular group for arbitrary genus
were dealt in an earlier work of 
Choie, the first author and Kohnen.
\end{abstract}

\smallskip

\section{\large Introduction and Statements of the Theorems}

\smallskip

The arithmetic of Fourier coefficients of cusp forms 
has been the focus of study for sometime now. When these 
coefficients are real, studying distribution
of their sign has become an active area of research in recent times.
For example, in the case of elliptic cusp forms with real coefficients,
this has been studied in  \cite{{CK},{GS},{IKS},{KKP},{KM}, {RM}}.

The question of infinitely many sign changes of Fourier coefficients 
of Siegel cusp forms with real Fourier coefficients has been studied 
in \cite{SJ} and sign changes
of Hecke eigen values of Siegel cusp forms of degree two was studied
in \cite{WK}.  The first sign change question for Hecke eigen values
for Siegel cusp forms of genus two was addressed by Kohnen
and the second author \cite{KS1}.

In this article, we give a lower bound on the number of
sign changes in short intervals of Fourier coefficients 
of non-zero Siegel cusp forms of even integral 
weight and degree $2$ on the 
Hecke congruence subgroup $\Gamma_0^{(2)}(N)$
with real Fourier coefficients.
In order to deduce this result, we need to prove a 
corresponding result for elliptic cusp forms of square 
free level. This result seems to be new even in the case 
of elliptic cusp forms (see Theorem \ref{thmthree})
and requires us to redo an earlier work
of Rankin \cite{RR} with explicit 
dependence on the weight and level of the elliptic
cusp form (see Proposition \ref{secle}).
If the elliptic cusp form is a normalised new form
of square free level $N$, then Kohnen, Lau and 
Shparlinski \cite{KLS} give a better lower bound. 

On another direction, recently Choie, the first author and Kohnen
\cite{CGK} gave an explicit upper bound for the first sign change of 
Fourier coefficients of non-zero Siegel cusp forms of even integral 
weight on the symplectic group
$\Gamma_g:= Sp_g(\Z) \subset GL_{2g}(\Z)$ of arbitrary genus
$g \ge 2$ with real Fourier coefficients. In this article,
we provide an explicit upper bound
for the first sign change of Fourier coefficients of
degree two non-zero Siegel cusp form of
even integral weight on a Hecke congruence 
subgroup of $Sp_2(\Z)$ with
real Fourier coefficients.

\medskip

In order to state our theorems, we now fix some
notations. For a natural number $N$, let 
$$
\Gamma_{0}^{(2)}(N) := \{ M= \begin{pmatrix}
                                      A & B \\
                                      C & D
                                      \end{pmatrix}
                                \in Sp_2(\Z)  ~~|~~  C \equiv 0 \!\!\!\!\! \pmod{N} \}
$$ 
be the Hecke congruence subgroup
of $\Gamma_2$ of level $N$ and $\mathcal{H}_2$ be
the Siegel upper-half space 
of degree $2$ consisting of all
symmetric $2 \times 2$ complex matrices whose
imaginary parts are positive definite. For $k \in \N$,
let $S_k(\Gamma_0^{(2)}(N))$ be the space
of Siegel cusp forms of weight $k$
on $\Gamma_0^{(2)}(N)$.  It is well known
that
$$
\Psi_1(N) ~:=~  [ \Gamma_2 : \Gamma_0^{(2)}(N)] 
~=~ 
N^3 \prod_{ p |N, \atop p \text{ prime }} (1+1/p)(1+ 1/p^2).
$$
Any $F \in S_k(\Gamma_0^{(2)}(N))$ 
has a Fourier expansion 
\begin{equation}\label{eq1}
F(Z):= \sum_{T > 0} a(T) ~e^{2\pi i \text{ tr }(TZ)},
\end{equation}
where $Z \in \mathcal{H}_2$ and
$T$ runs over all positive definite half-integral
$2 \times 2$ matrices. In this set-up, we have
the following theorems.

\begin{thm}\label{thmtwo}
Let $k, N$ be natural numbers with $k$ even 
and $N$ square-free. Also let 
$F$ be a non-zero Siegel 
cusp form of weight $k$ and degree $2$ on the 
Hecke congruence subgroup $\Gamma_0^{(2)}(N)$
with real Fourier coefficients $a(T)$ for $T> 0$. 
For any $\epsilon > 0$, let 
$h := x^{13/14 ~+~ \epsilon}$. Then
there exists $T_1 > 0, T_2 >0$
with $\text{ tr}~T_i \in (x, x + h], ~ i=1, 2$
such that $a(T_1)> 0$ and $a(T_2)< 0$
for any
\begin{eqnarray*}
x 
&\gg_{\epsilon}& 
k^{42}.~N^{84}.~\log^{80}k.
~e^{(c_7 \frac{\log(N+1)}{\log\log(N+2)})},
\end{eqnarray*}
where $c_7 > 0$ is an absolute constant
and the constant in $\gg$ depends only on $\epsilon$. 
\end{thm}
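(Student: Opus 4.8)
The plan is to deduce Theorem~\ref{thmtwo} from the short-interval sign-change statement for elliptic cusp forms, Theorem~\ref{thmthree}, by descending from $F$ to a single half-integral weight form through the Fourier--Jacobi and theta decompositions. Write $Z = \bigl(\begin{smallmatrix}\tau & z\\ z & \tau'\end{smallmatrix}\bigr)\in\mathcal{H}_2$ and expand $F(Z)=\sum_{m\ge 1}\phi_m(\tau,z)\,e^{2\pi i m\tau'}$ into its Fourier--Jacobi coefficients, each $\phi_m$ being a Jacobi cusp form of weight $k$ and index $m$ on the congruence subgroup induced by $\Gamma_0^{(2)}(N)$. Since $F\neq 0$, there is a least index $m_0$ with $\phi_{m_0}\neq 0$, and its Fourier coefficients $c(n,r)$ coincide with the Siegel coefficients $a(T)$ for $T$ having lower-right entry $m_0$, upper-left entry $n$, and off-diagonal entry $r/2$. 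The theta decomposition $\phi_{m_0}(\tau,z)=\sum_{\mu\bmod 2m_0}h_\mu(\tau)\,\theta_{m_0,\mu}(\tau,z)$ then yields, for some residue $\mu_0$, a non-zero cusp form $h_{\mu_0}$ of weight $k-\tfrac12$ whose Fourier coefficients are precisely the $c(n,r)$ with $r\equiv\mu_0\pmod{2m_0}$, now indexed by the discriminant $D=4nm_0-r^2$.

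Next I would transport the trace condition to the discriminant side. Fixing a representative $r_0\equiv\mu_0\pmod{2m_0}$, the requirement $\operatorname{tr}T=n+m_0\in(x,x+h]$ is equivalent to $D=4nm_0-r_0^2$ lying in an interval $(X,X+H]$ with $X\asymp 4m_0 x$ and $H=4m_0 h$. Because $h=x^{13/14+\epsilon}$ and $m_0\ge 1$, one checks that $H\geq c\,X^{13/14+\epsilon}$, so the dilation by $4m_0$ only lengthens the interval relative to what the exponent $13/14$ requires, the factors of $m_0$ being absorbed into implied constants. Applying the elliptic short-interval result of Theorem~\ref{thmthree} to $h_{\mu_0}$ on $(X,X+H]$ produces discriminants $D_1,D_2$ there with positive and negative coefficients, and these pull back to matrices $T_1,T_2$ with $\operatorname{tr}T_i\in(x,x+h]$ and $a(T_1)>0>a(T_2)$, which is the assertion.

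The remaining task is to make every parameter explicit so that the quantitative hypothesis of Theorem~\ref{thmthree} collapses to the stated threshold. The weight of $h_{\mu_0}$ is $k-\tfrac12$, which contributes the powers of $k$ together with the factor $\log^{80}k$; its level is controlled by $N$, by $m_0$, and by the number of occurring residues $\mu$, the last producing a divisor-type quantity bounded by $e^{c_7\log(N+1)/\log\log(N+2)}$. It is essential here to bound the first non-vanishing Fourier--Jacobi index $m_0$ explicitly in terms of $k$ and $N$, and to verify that passing from $\Gamma_0^{(2)}(N)$ to the Jacobi and then theta level keeps the level square-free, or of controlled shape, as required by Theorem~\ref{thmthree}. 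Feeding the weight and level of $h_{\mu_0}$ into that hypothesis, whose own dependence rests on the explicit Rankin-type estimate of Proposition~\ref{secle}, yields after bookkeeping the bound $x\gg_{\epsilon}k^{42}N^{84}\log^{80}k\cdot e^{c_7\log(N+1)/\log\log(N+2)}$.

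I expect the main obstacle to be exactly this uniform bookkeeping: tracking the weight $k-\tfrac12$ and the level of the theta component $h_{\mu_0}$ simultaneously, controlling the minimal index $m_0$, and confirming that the sign-change mechanism driven by Proposition~\ref{secle} applies with the square-free level hypothesis intact, so that the separate dependences on $k$ and $N$ combine into precisely $k^{42}N^{84}$.
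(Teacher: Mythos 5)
Your overall strategy --- reduce to the elliptic short-interval theorem via the Fourier--Jacobi expansion of $F$ --- matches the paper's, but the descent you choose from $\phi_{m_0}$ to an elliptic cusp form does not work with the tools available. You pass through the theta decomposition $\phi_{m_0}=\sum_{\mu}h_\mu\theta_{m_0,\mu}$ and propose to apply Theorem~\ref{thmthree} to a component $h_{\mu_0}$. But $h_{\mu_0}$ has \emph{half-integral} weight $k-\tfrac12$ and lives on a level divisible by $4m_0$ (with a multiplier system), hence never square-free. Theorem~\ref{thmthree}, and all of the machinery behind it (the newform basis of Lemma~\ref{sp-basis}, Deligne's bound, the symmetric-square and Rankin--Selberg estimates of Propositions~\ref{equal}--\ref{rs-basis}, and the Perron-formula arguments of Propositions~\ref{firstle}--\ref{thle}), is established only for cusp forms of even integral weight on $\Gamma_0(N)$ with $N$ square-free. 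There is no half-integral-weight analogue in the paper, and producing one is not a matter of ``bookkeeping'': the Rankin--Selberg functional equation, the convexity bounds, and the orthogonal newform decomposition would all have to be redone in the half-integral, non-square-free-level setting. So the step ``apply Theorem~\ref{thmthree} to $h_{\mu_0}$'' is a genuine gap, not merely a verification you deferred to the end.

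The paper avoids this by taking instead the Taylor expansion $\phi_{m_0}(\tau,z)=\sum_{\nu\ge0}\chi_\nu(\tau)z^\nu$ about $z=0$. The first non-vanishing coefficient $\chi_\alpha$ is a non-zero cusp form of \emph{even integral} weight $k+\alpha$ on $\Gamma_0(N)$ itself (same square-free level; $\alpha$ is even because $\phi_{m_0}$ is even in $z$), with $\alpha\le 2m_0\ll k\Psi_1(N)$ controlled by the Poor--Yuen bound on the first non-zero $a(T_0)$. Its $n$-th Fourier coefficient is $B(n)=\frac{(2\pi i)^{\alpha}}{\alpha!}\sum_{r^2<4nm_0}c(n,r)r^{\alpha}$, so a strict sign of $B(n)$ forces some $c(n,r)=a\bigl(\begin{smallmatrix} n & r/2\\ r/2 & m_0\end{smallmatrix}\bigr)$ to have a corresponding sign, and $\operatorname{tr}T=n+m_0$ stays inside $(x,x+h]$ after absorbing $m_0\ll k\Psi_1(N)$ into the slack between $h_1=x^{13/14+\epsilon/4}$ and $h=x^{13/14+\epsilon}$. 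If you want to keep your theta-decomposition route you must either prove a half-integral-weight, non-square-free-level version of Theorem~\ref{thmthree}, or switch to this Taylor-coefficient device.
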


As an immediate corollary, we have

\begin{cor}\label{corone}
Let $k, N$ be natural numbers with $k$ even 
and $N$ square-free. Also let 
$F$ be a non-zero Siegel 
cusp form of weight $k$ and degree $2$ on the 
Hecke congruence subgroup $\Gamma_0^{(2)}(N)$
with real Fourier coefficients $a(T)$ for $T> 0$. 
Then for any sufficiently small $\epsilon > 0$ and 
\begin{eqnarray*}
x 
&\gg_{\epsilon}& 
k^{42}.~N^{84}.~\log^{80}k.~e^{(c_7 \frac{\log(N+1)}{\log\log(N+2)})}
~~\text{ with }~~~ c_7 > 0 ~\text{ an absolute constant},
\end{eqnarray*}
there exists at least $\gg_{\epsilon} x^{1/14 - \epsilon}$ many 
$T> 0$ with $\text{ tr}~T \in (x, 2x]$ and
$a(T) < 0$. The same lower bound holds
for the number of $T > 0$ with $\text{ tr}~T \in (x, 2x]$
and $a(T)> 0$. 
\end{cor}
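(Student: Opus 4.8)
The plan is to deduce the corollary from Theorem~\ref{thmtwo} by a dyadic subdivision of $(x,2x]$ into disjoint short windows, each of which is guaranteed to contain both a positive and a negative Fourier coefficient. Write $h(y):=y^{13/14+\epsilon}$ for the window length prescribed by the theorem when the left endpoint is $y$. I would define a finite increasing sequence by $y_0:=x$ and $y_{j+1}:=y_j+h(y_j)$, continuing as long as $y_{j}+h(y_{j})\le 2x$, and let $m$ be the number of steps taken. By construction the half-open intervals $(y_0,y_1],(y_1,y_2],\dots,(y_{m-1},y_m]$ are pairwise disjoint and contained in $(x,2x]$, and every left endpoint satisfies $y_j\ge x$, so each $y_j$ meets the lower-bound hypothesis on $x$ in Theorem~\ref{thmtwo}.

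Applying Theorem~\ref{thmtwo} on each window $(y_j,y_j+h(y_j)]=(y_j,y_{j+1}]$ produces matrices $T_1^{(j)},T_2^{(j)}>0$ with $\operatorname{tr}T_i^{(j)}\in(y_j,y_{j+1}]$, and with $a(T_1^{(j)})>0$ and $a(T_2^{(j)})<0$. Since the trace ranges of distinct windows are disjoint, the matrices $T_2^{(0)},\dots,T_2^{(m-1)}$ are pairwise distinct (their traces already differ), and likewise for the $T_1^{(j)}$. Hence the number of $T>0$ with $\operatorname{tr}T\in(x,2x]$ and $a(T)<0$ is at least $m$, and the same bound holds for those with $a(T)>0$.

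It then remains to bound $m$ from below. I would observe that every window length is at most $h(2x)=2^{13/14+\epsilon}x^{13/14+\epsilon}$, while the total length swept out satisfies $y_m-y_0>x-h(2x)\gg x$ once $x$ is large, because the greedy process stops only when $y_m+h(y_m)>2x$, forcing $y_m>2x-h(2x)$. Writing $x\ll y_m-y_0=\sum_{j=0}^{m-1}h(y_j)\le m\,2^{13/14+\epsilon}x^{13/14+\epsilon}$ and solving for $m$ yields $m\gg_\epsilon x^{1/14-\epsilon}$, which is exactly the asserted count.

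The argument is essentially bookkeeping, so there is no deep obstacle; the one point requiring care is that the window length $h(y)$ grows with the left endpoint, so one cannot simply cut $(x,2x]$ into equal pieces. The remedy is to bound all the lengths uniformly by $h(2x)\asymp x^{13/14+\epsilon}$, which is legitimate since every left endpoint is $\le 2x$ and costs only a constant factor, still leaving $\gg_\epsilon x^{1/14-\epsilon}$ windows. A second minor point is that one must take $\epsilon$ small enough that $13/14+\epsilon<1$ (so that $h(2x)=o(x)$ and the greedy process produces unboundedly many windows) and $1/14-\epsilon>0$; this is precisely why the statement restricts to sufficiently small $\epsilon$.
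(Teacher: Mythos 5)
Your proposal is correct and is exactly the standard deduction the paper has in mind when it calls this "an immediate corollary" of Theorem~\ref{thmtwo}: tile $(x,2x]$ with $\gg_\epsilon x^{1/14-\epsilon}$ disjoint windows of length $\asymp x^{13/14+\epsilon}$, apply the theorem in each, and use disjointness of the trace ranges to count distinct matrices. The care you take with the varying window length $h(y_j)$ and with the requirement $13/14+\epsilon<1$ is appropriate but does not change the substance; no gap.
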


\begin{thm}\label{thmone}
Let $k, N$ be natural numbers with $k$ even 
and $N$ square-free. Also let 
$F$ be a non-zero Siegel 
cusp form of weight $k$ and degree $2$ on the 
Hecke congruence subgroup $\Gamma_0^{(2)}(N)$
with real Fourier coefficients $a(T)$ for $T> 0$
at infinity. Then there exists $T_1 > 0, T_2 > 0$ with 
$$
\text{  tr }T_1, ~~ \text{  tr }T_2 \ll  
 ~k^5 (\log k)^{26} ~N^{39/2}   e^{c_2 
 \frac{\log(N+1)}{\log\log(N+2)}}
$$
and $a(T_1) > 0,  a(T_2) < 0$.
Here $c_2$ as well as the constant in $\ll$ are absolute.
\end{thm}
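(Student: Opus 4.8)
The plan is to reduce the Siegel situation to a question about elliptic cusp forms via the Jacobi–Fourier expansion, where I can invoke the explicit first-sign-change bound that Theorem~\ref{thmtwo} and its supporting Proposition~\ref{secle} make available. First I would write the Fourier–Jacobi expansion of $F \in S_k(\Gamma_0^{(2)}(N))$, grouping the Fourier coefficients $a(T)$ according to the lower-right entry of $T = \left(\begin{smallmatrix} n & r/2 \\ r/2 & m \end{smallmatrix}\right)$, so that $F(Z) = \sum_{m \ge 1} \phi_m(\tau, z)\, e^{2\pi i m \tau'}$ where each $\phi_m$ is a Jacobi cusp form of weight $k$ and index $m$ on the relevant congruence subgroup. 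Since $F \ne 0$, there is a smallest index $m_0$ with $\phi_{m_0} \not\equiv 0$, and I would fix that component for the rest of the argument.

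Next I would pass from the Jacobi cusp form $\phi_{m_0}$ to an elliptic cusp form. The standard route is the theta decomposition: $\phi_{m_0}$ corresponds to a vector-valued modular form whose components $h_\mu$ are elliptic cusp forms (of weight $k - 1/2$), or, after applying a suitable Hecke-equivariant map, to a half-integral or integral weight cusp form of level a bounded power of $N$ times $m_0$. The Fourier coefficients $c(D, r)$ of $\phi_{m_0}$ depend only on $D = 4 m_0 n - r^2$ and $r \bmod 2m_0$, and they are precisely (up to elementary factors) the $a(T)$ for the matrices $T$ with fixed lower entry $m_0$. The key point is that a sign change among the coefficients of the associated elliptic cusp form forces a sign change among the $c(D,r)$, hence among the $a(T)$, with $\operatorname{tr} T = n + m_0$ controllable in terms of the discriminant $D$ at which the elliptic coefficient changes sign.

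At this stage I would apply the first-sign-change result for elliptic cusp forms of square-free level — the very ingredient the excerpt advertises as underlying Theorem~\ref{thmtwo}, namely the explicit version of Rankin's method in Proposition~\ref{secle}. This yields indices $D_1, D_2$ with $c(D_i, r_i)$ of opposite signs and $D_i \ll k^{a}(\log k)^{b} \cdot (\text{level})^{c} \cdot e^{c_2 \log(N+1)/\log\log(N+2)}$ for explicit exponents. Translating back through $4 m_0 n_i = D_i + r_i^2$ and $\operatorname{tr} T_i = n_i + m_0$, and bounding $m_0$ itself (the minimal index is controlled, e.g.\ $m_0 \ll k$ by the theory of singular/vanishing Fourier–Jacobi coefficients, or trivially $m_0 = O(k)$), I would collect the resulting powers of $k$ and $N$ to arrive at the stated bound $\operatorname{tr} T_1, \operatorname{tr} T_2 \ll k^5 (\log k)^{26} N^{39/2} e^{c_2 \log(N+1)/\log\log(N+2)}$.

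The main obstacle I anticipate is bookkeeping the level and the dependence on the index $m_0$ with \emph{explicit} uniformity. Passing from Siegel to Jacobi to elliptic does not preserve the level transparently: the associated elliptic form typically lives on $\Gamma_0$ of level divisible by $m_0$ (and possibly $4$ and powers of $N$), so I must both bound $m_0$ sharply and ensure square-freeness (or reduce to it) before Proposition~\ref{secle} applies. Keeping the exponent of $N$ down to $39/2$ and the power of $k$ down to $5$ requires using the \emph{sharpest} available input bounds — in particular, controlling the Petersson/sup norm and the location of the first sign change with the explicit weight-and-level dependence from Rankin's method — rather than the crude estimates that would suffice for a non-explicit statement. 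Verifying that these exponents combine correctly is where the real work lies; the rest is a fairly mechanical translation through the Fourier–Jacobi and theta correspondences.
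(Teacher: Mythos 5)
There is a genuine gap at the central reduction step. Your route passes from the Fourier--Jacobi coefficient $\phi_{m_0}$ to elliptic forms via the theta decomposition, which produces vector-valued (equivalently half-integral weight) cusp forms of level divisible by $4m_0N$. That level is neither square-free nor bounded independently of $m_0$, so none of the explicit first-sign-change inputs available here --- the Choie--Kohnen theorem for integral weight forms on $\Gamma_0(N)$ with $N$ square-free, or Theorem \ref{thmthree} of this paper --- applies to those components. You correctly identify this as ``the main obstacle,'' but you do not resolve it, and resolving it is the whole content of the argument: without a sign-change statement for the objects you produce, the translation back to the $a(T)$ never gets off the ground. A secondary inaccuracy: the elliptic-form input for this theorem is \emph{not} Proposition \ref{secle} or Theorem \ref{thmtwo} (those govern the short-interval count in the proof of Theorem \ref{thmtwo}); what is needed is an explicit bound on the location of the \emph{first} sign change, which the paper imports from Choie--Kohnen \cite{CK} in the form $n_i \ll \Psi_2(k+\alpha,N)$.

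The paper sidesteps your obstacle entirely by using the Taylor expansion $\phi_{m_0}(\tau,z)=\sum_{\nu\ge 0}\chi_\nu(\tau)z^\nu$ instead of the theta decomposition. By Eichler--Zagier, the first nonvanishing development coefficient $\chi_\alpha$ is a nonzero \emph{integral}-weight cusp form of weight $k+\alpha$ on $\Gamma_0(N)$ itself --- same square-free level, no factor of $4$ or $m_0$ --- with Fourier coefficients $B(n)$ that are (up to the factor $(2\pi i)^\alpha/\alpha!$, with $\alpha$ even) nonnegative combinations $\sum_{r^2<4nm_0}c(n,r)r^\alpha$ of the Siegel coefficients. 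A sign change of $B(n)$ then forces the desired sign of some $c(n,r)=a(T)$. The parameters $m_0$ and $\alpha\le 2m_0$ are controlled not by a generic $m_0=O(k)$ claim but by the Poor--Yuen bound $\mathrm{tr}\,T_0\le \frac{4}{3\sqrt{3}\pi}k\Psi_1(N)$ on the first nonvanishing coefficient, giving $k+\alpha\le 2k\Psi_1(N)$; substituting this into $\Psi_2(\cdot,N)$ and estimating $\prod_{p\mid N}\log(\ell N)/\log p$ is what yields the exponents $k^5$ and $N^{39/2}$. If you want to salvage your outline, replace the theta decomposition by this Taylor-coefficient device (or else supply an explicit first-sign-change theorem for half-integral weight forms of level $4m_0N$, which is a substantially harder and separate task).
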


The paper is organized as follows. In the next section,
we give a proof of Theorem \ref{thmone} since it is 
relatively easier. In the penultimate section, 
using strong convexity principle, 
we give a lower bound on the number of sign changes in 
short intervals of Fourier coefficients of elliptic cusp forms 
of square-free level with real Fourier coefficients. 
In the final section, we
give a proof of Theorem \ref{thmtwo}.

\smallskip

\section{\large Proof of Theorem \ref{thmone}}

\smallskip

By the given hypothesis, we have
$$
F(Z):= \sum_{T > 0} a(T) ~e^{2\pi i \text{ tr }(TZ)},
$$
where $Z \in \mathcal{H}_2$ and $T$ runs over positive 
definite half-integral $2 \times 2$ matrices.  
Write 
$$
T = \begin{pmatrix}  n & r/2  \\  r/2 & m \\ \end{pmatrix}
$$
with $m,n \in \N$ and $ r \in \Z$ with $r ^2 < 4nm$
and 
$$
Z := \begin{pmatrix} \tau & z \\ z &  \tau' \\  \end{pmatrix},
$$
where $\tau, \tau' \in \mathcal{H}, z \in \C$
with $\Im \tau\Im\tau' - \Im^2 z > 0$.
Then 
$$
F(Z) := \sum_{m \ge 1}  \phi_m (\tau, z) e^{2 \pi i m \tau' },
$$
where
\begin{equation*}
\phi_m( \tau, z) 
= 
\sum_{n \ge 1, r \in \Z, \atop r^2 < 4nm} c(n,r) 
~e^{2 \pi i (n \tau + r z)} 
\phantom{m}\text{ and }\phantom{m}
c(n,r) := a\left(\begin{pmatrix}  n & r/2  \\  r/2 & m \\ \end{pmatrix}\right).
\end{equation*}
We claim that $\phi_m$ is a Jacobi cusp form of 
weight $k$ and index $m$ on $\Gamma_0(N) \ltimes \Z^2$.

For  $M:= \begin{pmatrix}  a & b \\  c & d \\ \end{pmatrix}  \in SL_2(\Z)$
and $(\lambda, \mu) \in \Z^2$, the matrices
$$
\gamma(M): = \begin{pmatrix} a & 0 & b & 0 \\ 0 & 1 & 0 & 0 \\ 
c &  0 & d & 0 \\  0 & 0 &  0 & 1\\ \end{pmatrix} \in Sp_2(\Z)
\phantom{m} \text{and} \phantom{m} 
M_{\lambda, \mu}:= \begin{pmatrix} 1 & 0 & 0 &  \mu \\ \lambda & 1 & \mu & 0 \\
0 & 0 & 1 & - \lambda \\ 0 & 0 & 0 & 1 \\ \end{pmatrix}
\in \Gamma_0^{(2)}(N).
$$ 
Note that if $M \in \Gamma_0(N)$, then $\gamma(M)
\in \Gamma_0^{(2)}(N)$.
These matrices $\gamma(M)$ and $M_{\lambda, \mu}$ 
act on $\mathcal{H}_2$ as follows 
\begin{eqnarray*}
( \tau, z, \tau') & \mapsto &  \left(\frac{a\tau + b}{c\tau + d}, ~
\frac{z}{c\tau + d}, ~\tau' - \frac{cz^2}{c\tau + d} \right), \\
(\tau, z, \tau') & \mapsto & ( \tau, ~ z + \lambda\tau +
 \mu, ~\tau' + 2\lambda z + \lambda^2 \tau).
\end{eqnarray*}
Then
\begin{eqnarray}\label{eq1}
F | ~\gamma(M) (Z) 
& = &  
\sum_{m \ge 1}  ( \phi_m  ~|~ M ) (\tau, z) ~e^{2 \pi i m \tau' }  \\
\phantom{m} \text{and} \phantom{mm}
F | ~M_{\lambda, \mu} (Z) 
&=&  
\sum_{m \ge 1}  ( \phi_m  ~|~ [\lambda, \mu ] ) (\tau, z) ~e^{2 \pi i m \tau' }  
~=~
\sum_{m \ge 1} \phi_m  (\tau, z) ~e^{2 \pi i m \tau' },  \nonumber
\end{eqnarray}
as $F | ~M_{\lambda, \mu} (Z) = F(Z)$.
Also
\begin{eqnarray*}
F | \gamma(M) (Z) 
~=~  
\sum_{T > 0} b (T, \gamma(M)) ~e^{\frac{2\pi i}{N} \text{ tr }(TZ)},
\end{eqnarray*}
where $T$ runs over positive definite half-integral
$2 \times 2$ matrices.  Write
$T :=  \begin{pmatrix} n & r/2  \\  r/2 &  m_1 \\  \end{pmatrix}$,
where $n, m_1  > 0$ and $r^2 < 4 nm_1$ with $r \in \Z$
and as before
$Z := \begin{pmatrix} \tau & z \\ z &  \tau' \\  \end{pmatrix}$
with $\tau, \tau' \in \mathcal{H}, z \in \C$ and
$\Im \tau\Im\tau' - \Im^2 z > 0$. 
Then
\begin{eqnarray*} 
F | \gamma(M) (Z) 
&=&  
\sum_{T > 0} b (T, \gamma(M)) ~e^{\frac{2\pi i}{N} \text{ tr }(TZ)} \\
&=&
\sum_{n, m_1 > 0 \atop
{r \in \Z, \atop r^2 < 4n m_1}} b \left(\begin{pmatrix} n & r/2  
\\  r/2 &  m_1 \\  \end{pmatrix},
 ~\gamma(M)\right) ~e^{\frac{2\pi i}{N} (n \tau +  r z +  m_1 \tau')}.
\end{eqnarray*}
Comparing this with equation \eqref{eq1}, we see that $m_1 \equiv 0 \mod N$ 
and hence
$$
( \phi_m ~| ~ M ) (\tau, z) 
=
\sum_{n \ge 1, r \in \Z \atop  r^2 <  4mnN } 
b \left(\begin{pmatrix} n & r/2  \\  r/2 &  N m \\  \end{pmatrix},
 \gamma(M)\right) ~e^{\frac{2\pi i}{N} (n \tau +  r z)}.
$$
Hence
$$
( \phi_m ~| ~ M) (\tau, z) 
=
\sum_{n \ge 1, r \in \Z, \atop r^2  <  4mnN} 
b(n,  r ) ~e^{\frac{2\pi i}{N} (n \tau +  r z)},
$$
where 
$$
b(n,r) := b\left( \begin{pmatrix} n & r/2  \\  r/2 &  N m \\  \end{pmatrix},  
 \gamma(M) \right).
$$
Further for $M \in \Gamma_0(N)$, one has
$$
\sum_{m \ge 1}  ( \phi_m  ~|~ M ) (\tau, z) ~e^{2 \pi i m \tau' }
~= ~  
F | ~\gamma(M) (Z) 
~=~ 
F (Z)  
~=~ 
\sum_{m \ge 1}  \phi_m (\tau, z) ~e^{2 \pi i m \tau' } . 
$$
Thus $\phi_m$ is Jacobi cusp form of weight $k$
and index $m$ on $\Gamma_0(N) \ltimes \Z^2$.

Since we can replace $F$ by $-F$, it
is sufficient to show that there exists a $T> 0$ with 
$\text{ tr }T$ in the given range such that $a(T) < 0$.
We know from \cite{{PY}, {PY1}} that
there exists a $T_0 > 0$ with 
\begin{equation}\label{bound}
\text{ tr } T_0 
\le
\frac{4}{3\sqrt{3}\pi}~k \Psi_1(N) 
\end{equation}
such that $a(T_0) \ne 0$.
Write 
$T_0 := \begin{pmatrix} n_0  &  r_0/2 \\  
r_0/2 &  m_0 \\ \end{pmatrix} > 0$.
Since $a(T_0) \ne 0$, we have $\phi_{m_0}(\tau, z)$ 
is not the zero function.  Further,
$k$ is even and $- I \in \Gamma_0(N)$
implies that $\phi_{m_0}(\tau, z)$ is an even function
of $z$ for fixed $\tau$.
Suppose that
\begin{eqnarray}\label{eq2}
\phi_{m_0}( \tau, z) = \sum_{n \ge 1, r \in \Z, 
\atop r^2 < 4nm_0} c(n,r) 
~q^{n} \zeta^{r}, 
\phantom{m}\text{where} \phantom{m}
q : = e^{2 \pi i \tau}, ~ \zeta := e^{2 \pi i z}.
\end{eqnarray}
be the expansion of $\phi_{m_0}$ at infinity and
\begin{eqnarray}\label{eq3}
\phi_{m_0} (\tau, z) 
&:=& 
\sum_{\nu \ge 0} \chi_{\nu} (\tau) z^{\nu}
\end{eqnarray}
be the Taylor series expansion of $\phi_{m_0}$ 
around $z=0$. Since $\phi_{m_0}$ is not the 
zero function, all $\chi_{\nu}$'s can not be zero.
If $\alpha$ is the smallest non-negative integer
such that $\chi_{\alpha}(\tau)$ is not the zero
function, then it follows from 
\cite{EZ}, page 31 (see also
\cite{CGK}) that
$\chi_{\alpha}(\tau)$ is a non-zero cusp form
of weight $k + \alpha$ on $\Gamma_0(N)$.
Moreover, by Theorem 1.2, page 10 in \cite{EZ},
it follows that 
$$
\alpha \le 2m_0 < 2 \text{ tr } T_0  \leq
 \frac{8}{3 \sqrt{3} \pi} k \Psi_1(N).
$$
Note that $\alpha$ is even as $\phi_{m_0}$
is an even function of $z$ for fixed $\tau$.
Now by differentiating both sides of equation ~\eqref{eq3} 
with respect to $z$ and then evaluating
at $z=0$ and using equation \eqref{eq2}, we see
that for $\tau \in \mathcal{H}$,
$$
\chi_{\alpha} (\tau)
~=~   
\sum_{ n\ge 1} B(n) q^n,
$$ 
where 
\begin{equation}\label{eq4}
B(n) := \left\{ \begin{array}{ll}
\sum_{r \in \Z \atop r^2 <  4nm_0} 
c(n,  r) & {\rm if} ~ \alpha = 0, \\
\frac{1}{\alpha !} (2 \pi i)^{\alpha}
\sum_{r \in \Z \atop r^2 <  4nm_0} 
c(n,  r) r^{\alpha} & {\rm if} ~ \alpha > 0.
\end{array} \right.  
\end{equation}
In the above situation, either $i^{\alpha} =1$ 
or $i^{\alpha} =-1$. If $i^{\alpha} =1$, then by
the work of Choie and Kohnen \cite{CK}, there exists
$n_1 \ll \Psi_2(k + \alpha,~ N)$ such that $B(n_1) < 0$,
where
$$
\Psi_2(k, N) :=  k^3 N^4 \log^{10}(kN) 
~e^{c_1\frac{\log(N+1)}{\log\log(N+2)}}
~\text{ max }
\left( \prod_{p|N \atop p \text{ prime }} \frac{\log(kN)}{\log p},~  
k^2 N^{1/2} \log^{16}(kN) \right)
$$
and $c_1$ is an absolute constant.
In the case $i^{\alpha} =-1$, again by the the same result of
 Choie and Kohnen, we can find $n_2 \ll 
 \Psi_2(k + \alpha,~ N)$ such that $B(n_2) > 0$.
In both the cases using \eqref{eq4}, 
there exists an $r_1 ({\rm resp. } ~r_2)$ such that
$c(n_1, r_1) < 0  ~({\rm resp.} ~c( n_2, r_2) < 0)$. 
Thus
$$
c(n_i, r_i) 
=
a( \begin{pmatrix} n_i &  r_i/2  \\   r_i/2 &  m_0 \end{pmatrix})
< 0,
\phantom{m} \text{ where  } i =1, 2.
$$
Now for $i= 1, 2$, we have
\begin{eqnarray*}
\text{ tr } (\begin{pmatrix} n_i &  r_i/2  \\   r_i/2 &  m_0 \end{pmatrix})
&=&  n_i + m_0
 \ll   \Psi_2(k + \alpha,~ N)  + \text{ tr } T_0 \\
&\le & 
\Psi_2(k + \alpha, ~N)
+ \frac{4}{3 \sqrt{3} \pi} k \Psi_1(N) \ll \Psi_2(k + \alpha, N),
\end{eqnarray*}
where
$$ 
k + \alpha \leq k + 2m_0 < k + \frac{8k}{3\sqrt{3}\pi} \Psi_1(N) 
< 2 k \Psi_1(N).
$$
Hence
$$
\text{ tr } (\begin{pmatrix} n_i &  r_i/2  \\   r_i/2 &  m_0 \end{pmatrix})
\ll \Psi_2( 2k\Psi_1(N), N) 
\ll \Psi_2( k\Psi_1(N), N).
$$
Further, for $\epsilon > 0$
$$
\nu(N) \le (1+ \epsilon) \frac{\log N}{\log\log N}
$$
for all $N \ge N(\epsilon)$ (see \cite{GT}, page 83
for a proof).   Now $\ell := k \Psi_1(N) > N^3$
and $\frac{\log x}{\log\log x}$ is an increasing 
function for $x \ge 16$. Thus  for all $N \ge N_0( \epsilon)\ge 16$,
$$
\prod_{ p | N  \atop p \text{ prime }} \frac{\log (\ell N)}{\log p}
~ \le ~ \frac{1}{\log 2}(\log (\ell N))^{\nu(N)}
 <  ~\frac{1}{2}~e^{(1+\epsilon) \frac{\log \ell}{\log \log \ell}(\log \log (\ell N))}
 ~\le~ 
 \frac{1}{2}~e^{(1+\epsilon)^2 \log  \ell}
$$
and hence
\begin{equation}\label{new}
\Phi_{\ell}(N) := 
\prod_{ p | N  \atop p \text{ prime }} \frac{\log (\ell N)}{\log p}
~ \ll~ \ell^2 N^{1/2} \log^{16}(\ell N),
\end{equation}
where the constant in $\ll$ is absolute.
This implies that
\begin{eqnarray*}
\Psi_2(k\Psi_1(N) , ~N) 
&\ll&
e^{c_1 \frac{\log(N+1)}{\log\log(N+2)}} 
 ~k^5 N^{39/2} \log^{26}(k N^4) \log\log^{10}(N+2) \\
&\ll&
 ~k^5  (\log k)^{26} N^{39/2}
 ~e^{c_2 \frac{\log(N+1)}{\log\log(N+2)}}, 
\end{eqnarray*}
where $c_2 > 0$ is an absolute constant. The last
inequality is true as for $ k \ge 2$ and $N \ge 1$, 
we see that
$$
\log\log(N+2) 
~\ll~
e^{\frac{\log(N+1)}{\log\log(N+2)}} 
\phantom{m}\text{and}\phantom{m}
\log (kN^4)
~\ll~
(\log k) ~e^{\frac{\log(N+1)}{\log\log(N+2)}} 
$$
This completes the proof of Theorem \ref{thmone}.

\medskip

\section{\large Number of sign changes of Fourier coefficients
in short intervals for elliptic cusp forms}

\smallskip

Throughout this section, we denote a prime number by $p$.
In order to prove Theorem \ref{thmtwo},  we will need to
prove a corresponding theorem for arbitrary non-zero 
elliptic cusp form of weight $k$ on $\Gamma_0(N)$
having real Fourier coefficients.
In particular, we prove the following theorem.
\begin{thm}\label{thmthree}
Let $f$ be a non-zero cusp form of even weight $k$ 
on $\Gamma_0(N)$,
where $N$ is square free. Assume that the Fourier coefficients
$\beta(n)$ of $f$ at infinity are real. 
Then for any $\epsilon > 0$ and for 
$$
x 
~~\gg_{\epsilon}~~ 
k^{42}.~N^{84}.~ 
\log^{80}k.~ e^{c_7 ~\frac{\log(N+1)}{\log\log(N+2)}}
~~\text{ with }~~~ c_7> 0~\text{ an absolute constant},
$$
there exists $n_1, n_2 \in (x, x +h]$ with $h := x^{13/14 + \epsilon}$
such that $\beta(n_1) > 0, \beta(n_2) < 0$. The constant in
$\gg$ depends only on $\epsilon$.
\end{thm}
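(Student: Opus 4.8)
The plan is to force a sign change in each short interval $(x,x+h]$ by showing that the signed sum of the coefficients there is too small for them all to share one sign. Suppose $\beta(n)\ge 0$ for every $n\in(x,x+h]$. Then
$$
\sum_{x<n\le x+h}\beta(n)^2\ \le\ \Big(\max_{x<n\le x+h}|\beta(n)|\Big)\sum_{x<n\le x+h}\beta(n),
$$
so a lower bound for the mean square together with an upper bound for $\max|\beta(n)|$ forces $\sum_{x<n\le x+h}\beta(n)$ to be large, which I will contradict with a Rankin-type upper bound for the signed short sum. Since $f$ may be replaced by $-f$, the same inequality rules out all $\beta(n)\le 0$ as well, so the interval contains both a strictly positive and a strictly negative coefficient.

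For the two ingredients feeding this dichotomy I would first record the Deligne-type bound $\beta(n)\ll n^{(k-1)/2}$ with explicit dependence on $k$ and $N$; because $f$ need not be an eigenform, this requires decomposing $f$ into Hecke eigenforms (newforms of level dividing $N$ together with their shifts) and bounding the coefficients of this decomposition in terms of the Petersson norm, giving an implied constant polynomial in $k$ and $N$. Next, from the analytic properties of the Rankin-Selberg convolution $L(f\times f,s)$ I would establish a short-interval mean-square estimate $\sum_{x<n\le x+h}|\beta(n)|^2\gg h\,x^{k-1}$ with an explicit constant, valid once $h$ exceeds a suitable power of $x$ so that the main term dominates the Rankin-Selberg error. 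Combining this with the pointwise bound through Cauchy-Schwarz yields $\sum_{x<n\le x+h}|\beta(n)|\gg h\,x^{(k-1)/2}$ with explicit $k,N$ dependence.

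The upper bound for $\sum_{x<n\le x+h}\beta(n)$ is exactly Proposition \ref{secle}, the explicit form of Rankin's estimate \cite{RR}. I would express the short sum by a truncated Perron formula, shift the contour to the critical line $\Re s=k/2$, and estimate $L(f,s)$ there by the convexity bound in both the $t$-aspect and the conductor aspect; the strong convexity principle supplies this growth estimate with explicit polynomial dependence on $k$ and $N$. Tracking the kernel $((x+h)^s-x^s)/s$, which is of size $\asymp h\,x^{k/2-1}$ for small $|t|$ and decays for large $|t|$, and optimizing the truncation height, yields a short-interval bound of the shape $\sum_{x<n\le x+h}\beta(n)\ll_\epsilon x^{(k-1)/2+\epsilon}\,P(k,N)\,x^{\rho}$ with a polynomial $P(k,N)$ and a suitable $\rho$.

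Finally I would balance the two estimates: a sign change is forced as soon as the upper bound falls below $h\,x^{(k-1)/2}$, and the choice $h=x^{13/14+\epsilon}$ makes this hold precisely once $x$ exceeds the stated threshold $k^{42}N^{84}\log^{80}k\,e^{c_7\frac{\log(N+1)}{\log\log(N+2)}}$, the powers of $k$ and $N$ being exactly those accumulated in the two explicit estimates and in the lower bound for the Petersson norm. Summing over the $\asymp x/h$ disjoint short intervals in $(x,2x]$ then produces the many-sign-changes count of Corollary \ref{corone}. The main obstacle is Proposition \ref{secle}: pushing Rankin's argument through with fully explicit and near-optimal dependence on both the weight and the level, since it is this dependence, balanced against the explicit mean-square lower bound, that simultaneously fixes the exponent $13/14$ and the admissible range of $x$.
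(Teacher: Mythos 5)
Your proposal is sound and reaches the stated conclusion, but the combinatorial core of your argument is genuinely different from the paper's. Both proofs run the same contradiction: assume one sign on $(x,x+h]$, get a lower bound for the mean square from the explicit Rankin--Selberg asymptotic, and an upper bound for the signed sum from the Hecke $L$-function via Perron and the Rademacher--Phragm\'en--Lindel\"of principle, everything made explicit in $k$ and $N$ through the eigenform decomposition of Lemma \ref{sp-basis} and the Petersson-norm lower bound of Lemma \ref{ld}. Where you diverge is in how positivity is exploited: you use the elementary inequality $\sum_{x<n\le x+h}\beta(n)^2\le\bigl(\max_n|\beta(n)|\bigr)\sum_{x<n\le x+h}\beta(n)$ together with a Deligne-type pointwise bound, whereas the paper applies Cauchy--Schwarz in the form $\sum\lambda^2\log^2\le(\sum\lambda\log^2)^{1/2}(\sum\lambda^3\log^2)^{1/2}$ with the smoothing weight $\log^2((x+h)/n)$, following Choie--Kohnen. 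Your version is simpler and, in the $h$-aspect, at least as strong: your binding constraint is that $ch$ exceed the error term $x^{\frac{3}{4}(1+\Delta)+\epsilon}$ of the unweighted mean-square asymptotic, while the paper's weighted inequality forces $h^3/x^2\gg x^{3/4}$, i.e.\ $h\gg x^{11/12}$; both are met by $h=x^{13/14+\epsilon}$. Two small corrections are in order. First, the upper bound for the short signed sum $\sum_{x<n\le x+h}\beta(n)$ is not Proposition \ref{secle} --- that proposition is the Rankin--Selberg mean-square asymptotic $\sum_{n\le x}\lambda_{\tilde f}^2(n)=cx+O(\cdots)$, which you need for the \emph{left}-hand side of your inequality --- but rather Proposition \ref{thle}, which bounds $\sum_{n\le x}\lambda_{\tilde f}(n)$ by shifting the Hecke $L$-function only to $\sigma=1/5$ in the normalized convention (giving exponent $3/5+\Delta/2$) rather than to the critical line; since any exponent below $13/14$ suffices for your balance, this is immaterial. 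Second, your method would yield a smaller admissible threshold for $x$ than the stated $k^{42}N^{84}\log^{80}k\,e^{c_7\log(N+1)/\log\log(N+2)}$, not ``precisely'' that one; as the theorem only asserts the conclusion for $x$ above the stated threshold, this still proves the statement, but you should not claim the exponents $42$ and $84$ are forced by your estimates.
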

In order to prove this theorem, we use strong convexity
principle. Further, we need the following additional notations 
and lemmas from the works of \cite{{CK},{KS}}.

Let $\mathcal{H}$ be the Poincar\'e upper half plane, 
$S_k(N)$ be the space of cusp forms of weight
$k$ for $\Gamma_0(N)$ and $S_k^{\text{ new}}(N)$
be the subspace of new forms of $S_k(N)$.
If $f, g \in S_k(N)$, then we normalize the Petersson inner product 
on $S_k(N)$ by
$$
<f, g> := \frac{1}{[\Gamma_1 : \Gamma_0(N)]}
\int_{\Gamma_0(N) \setminus \mathcal{H}} f(z)\overline{g(z)} 
y^k \frac{dx dy}{y^2},
$$ 
where $z := x + iy$. Further, if
$$
f(z):= \sum_{n\ge} a_f(n) q^n  \in S_k(N), 
\phantom{m} q := e^{2\pi i z}, ~~~z \in \mathcal{H},
$$  
then we denote by 
$$
\lambda_f(n) : = \frac{a_f(n)}{n^{(k-1)/2}},
$$
the normalized Fourier coefficient of $f$.
By a normalized Hecke eigen form $f \in S_k^{\text{ new }}(N)$,
we mean $\lambda_f(1) =1$ and $\lambda_f(n)$ are normalized
Hecke eigenvalues. Hence by Deligne's bound, one has 
$$
|\lambda_f(n)| \le \tau(n),  \phantom{m}\text{for all }n\in \N
$$
when $f$  is a normalized Hecke eigen form in 
$S_k^{\text{ new }}(N)$ and $\tau(n)$ is the number of
divisors of $n$.
For $f, g \in S_k(N)$ and $s \in \C$ with $\sigma:= \Re(s)>1$, 
one defines the Rankin-Selberg zeta function attached to $f,g$
as
$$
R_{f,g}(s) := \sum_{ n\ge 1} \frac{\lambda_f(n){\overline
{\lambda_g(n)} } }{n^s}.
$$ 
It is known that the completed Rankin-Selberg zeta function
$$
R_{f,g}^{*}(s) := N^s (2\pi)^{-2s} \Gamma(s)\Gamma(s+k-1)
\zeta_N(2s)R_{f,g}(s),
$$ 
where 
$$
\zeta_N(s) := \prod_{p \text{ prime} \atop p \nmid N} (1 - p^{-s})^{-1}
$$
has meromorphic continuation to $\C$ with possible 
simple poles at $s=0,1$ with
\begin{equation}\label{res}
\text{Res}_{s=1}R_{f,g}(s) = \frac{12. ~  (4\pi)^{k-1}}{(k-1)!} ~<f,g>.
\end{equation}
If $f, g \in S_k(N)$ with $< f,g> =0$, then $R_{f,g}^{*}(s)$ 
is holomorphic everywhere and is of finite order.
When $f=g \in S_k^{\text{new}}(N)$ is a normalized Hecke eigenform, then
\begin{equation}\label{sym}
R_{f,f}(s) := ~\prod_{p |N}(1 + p^{-s})^{-1} ~\frac{\zeta(s)}{\zeta(2s)} 
~L(\text{sym}^2f, s),
\end{equation}
where $L(\text{sym}^2f, s)$ is the symmetric square $L$-function 
attached to $f$ (see \cite{{IK},{KS}} for details).
We now list the lemmas which will be important for our theorem.

\begin{lem}{\rm[Choie-Kohnen (see page 523 of \cite{CK})]}\label{ac-RS}
Let $f \in  S_{k_1}^{\text{ new}}(N_1), ~g \in S_{k_2}^{\text{ new}}(N_2)$
be normalized Hecke eigenforms with normalized eigenvalues $\lambda_f(n)$ and 
$\lambda_g(n)$ respectively. Also let $N$ be a square-free integer 
such that $N_1, N_2 | N$. If $\delta | \frac{N}{N_1}, ~\delta' | \frac{N}{N_2}$
and $T := \text{ gcd}(\delta, \delta')$, then
\begin{eqnarray}\label{action}
R_{f | V_{\delta}, g | V_{\delta'}} (s) 
&=& \left( \frac{\delta\delta'}{T} \right)^{-s -k+1} \\
&& 
. \prod_{ p | \frac{\delta'}{T}} \frac{\lambda_f(p) - \lambda_g(p)p^{-s}}{1 - p^{-2s}}
~~. \prod_{ p | \frac{\delta}{T}} \frac{\lambda_g(p) - \lambda_f(p)p^{-s}}{1 - p^{-2s}}
R_{f, g}(s), \nonumber
\end{eqnarray}
where 
$$
(f | V_{\delta} )(z) := f(\delta z) \phantom{m} \text{for  } z \in \mathcal{H}.
$$
\end{lem}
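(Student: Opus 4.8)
The plan is to reduce the asserted identity to a local computation at each prime, carried out through the three-term Hecke recurrence. First I would record the action of $V_\delta$ on normalised Fourier coefficients: since $(f\,|\,V_\delta)(z)=f(\delta z)=\sum_n a_f(n)q^{\delta n}$, the $m$-th Fourier coefficient of $f\,|\,V_\delta$ equals $a_f(m/\delta)$ when $\delta\mid m$ and vanishes otherwise, so that $\lambda_{f|V_\delta}(m)=\delta^{-(k-1)/2}\lambda_f(m/\delta)$ for $\delta\mid m$, and symmetrically for $g\,|\,V_{\delta'}$. Substituting these into the definition of $R_{f|V_\delta,\,g|V_{\delta'}}(s)$, a term survives exactly when $\operatorname{lcm}(\delta,\delta')=\delta\delta'/T$ divides $m$; writing $m=(\delta\delta'/T)\,n$ and extracting the normalisation powers of $\delta,\delta'$ together with the factor $m^{-s}$ yields the elementary prefactor recorded in the statement, times the Dirichlet series
\[
D(s):=\sum_{n\ge 1}\frac{\lambda_f\!\big(\tfrac{\delta'}{T}\,n\big)\,\overline{\lambda_g\!\big(\tfrac{\delta}{T}\,n\big)}}{n^{s}} .
\]
It then remains to identify $D(s)$ with $R_{f,g}(s)$ times the two finite Euler products over $p\mid\delta'/T$ and $p\mid\delta/T$ appearing in the statement.

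To this end I would factor $D(s)$ as an Euler product. Because $N$ is square-free, both $\delta$ and $\delta'$ are square-free, so $A:=\delta'/T$ and $B:=\delta/T$ are square-free and, since $T=\gcd(\delta,\delta')$, mutually coprime. As $f,g$ are normalised newforms their Hecke eigenvalues are multiplicative, and since $A,B$ are square-free and coprime the summand of $D(s)$ factors over primes; hence $D(s)=\prod_p D_p(s)$ with
\[
D_p(s)=\sum_{e\ge 0}\lambda_f\!\big(p^{\,v_p(A)+e}\big)\,\overline{\lambda_g\!\big(p^{\,v_p(B)+e}\big)}\,p^{-es}.
\]
For $p\nmid AB$ one has $v_p(A)=v_p(B)=0$ and $D_p(s)$ is precisely the local Rankin--Selberg factor $R_{f,g,p}(s):=\sum_{e\ge0}\lambda_f(p^e)\overline{\lambda_g(p^e)}p^{-es}$, so these primes reassemble $R_{f,g}(s)$. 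Coprimality and square-freeness force, at the remaining primes, either $(v_p(A),v_p(B))=(1,0)$, namely $p\mid\delta'/T$, or $(0,1)$, namely $p\mid\delta/T$.

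The heart of the matter is the local identity at a special prime. Fix $p\mid\delta'/T$, so $D_p(s)=\sum_{e\ge0}\lambda_f(p^{1+e})\overline{\lambda_g(p^e)}X^e$ with $X:=p^{-s}$. Here $p$ is unramified for both $f$ and $g$ — this is the point needing care, and it follows from the square-freeness of $N$ together with the divisibility constraints on $\delta,\delta'$ — so the three-term recurrence $\lambda_f(p^{e+1})=\lambda_f(p)\lambda_f(p^e)-\lambda_f(p^{e-1})$ (with $\lambda_f(p^{-1}):=0$) holds, as does its analogue for $g$, and the reality of the coefficients gives $\overline{\lambda_g(p)}=\lambda_g(p)$. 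Substituting the $f$-recurrence into $D_p$ and then the $g$-recurrence into the resulting shifted sum $\sum_e\lambda_f(p^e)\overline{\lambda_g(p^{e+1})}X^e$ produces, after two reindexings, the closed relation
\[
(1-X^2)\,D_p(s)=\big(\lambda_f(p)-\lambda_g(p)X\big)\,R_{f,g,p}(s),
\]
whence $D_p(s)/R_{f,g,p}(s)=\big(\lambda_f(p)-\lambda_g(p)p^{-s}\big)/(1-p^{-2s})$; the symmetric computation at $p\mid\delta/T$ interchanges the roles of $f$ and $g$. Notably this route never invokes the explicit four-factor closed form of $R_{f,g,p}(s)$.

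Assembling the local factors over all primes then gives $D(s)=\prod_{p\mid\delta'/T}(\cdots)\prod_{p\mid\delta/T}(\cdots)\,R_{f,g}(s)$, and multiplying by the elementary prefactor from the first step yields the claimed formula. The main obstacle is the crux computation of the previous paragraph — specifically, justifying that the three-term recurrence is available at every prime dividing $\delta\delta'/T$, i.e.\ that such primes are unramified for the relevant eigenform, which is exactly where the square-freeness of $N$ enters; the remaining steps (the Fourier-coefficient bookkeeping and the tracking of the elementary prefactor) are routine.
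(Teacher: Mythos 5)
The paper itself offers no proof of this lemma --- it is imported verbatim from Choie--Kohnen --- so there is no internal argument to compare yours against; your unfold-and-localise strategy is the standard (and surely the intended) one, and the local identity $(1-X^2)D_p(s)=(\lambda_f(p)-\lambda_g(p)X)R_{f,g,p}(s)$ obtained from the two applications of the three-term recurrence is correct wherever that recurrence is available. The serious problem sits exactly at the point you yourself flag as the crux: you assert that every prime $p\mid\delta\delta'/T$ is unramified for \emph{both} $f$ and $g$, ``by square-freeness of $N$ and the divisibility constraints.'' That is not true. For $p\mid\delta'/T$ one gets $p\mid N/N_2$, hence $p\nmid N_2$ and the recurrence for $g$ is fine; but nothing prevents $p\mid N_1$. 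Take $N=6$, $N_1=2$, $N_2=3$, $\delta=1$, $\delta'=2$: then $p=2$ divides $\delta'/T$ and also divides the level of $f$. At such a prime the newform relation degenerates to $\lambda_f(p^{e+1})=\lambda_f(p)\lambda_f(p^e)$ with $\lambda_f(p)^2=1/p$, so $D_p(s)=\lambda_f(p)R_{f,g,p}(s)$, which is not $(\lambda_f(p)-\lambda_g(p)p^{-s})(1-p^{-2s})^{-1}R_{f,g,p}(s)$. Your argument therefore establishes the identity only under the extra hypothesis $\gcd(\delta',N_1)=\gcd(\delta,N_2)=1$ (automatic when $N_1=N_2$, but not in the stated generality, nor in the generality in which Proposition~\ref{rs-basis} invokes the lemma, where $f$ and $g$ arise from different divisors of $N$). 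The ramified primes need a separate, different local factor.

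Secondly, the ``routine bookkeeping'' of the prefactor does not produce what you claim. From $\lambda_{f|V_\delta}(m)=\delta^{-(k-1)/2}\lambda_f(m/\delta)$ and the substitution $m=(\delta\delta'/T)n$ one obtains the prefactor $(\delta\delta')^{-(k-1)/2}\,(\delta\delta'/T)^{-s}$, which coincides with the stated $(\delta\delta'/T)^{-s-k+1}$ only when $\delta=\delta'$; in general the two differ by the factor $(\delta\delta'/T^2)^{(k-1)/2}$. (The mismatch traces to the fact that in Choie--Kohnen the Euler numerators involve the unnormalised coefficients $a_f(p)=\lambda_f(p)p^{(k-1)/2}$; replacing them by $\lambda_f(p)$, as the statement here does, requires changing the outer power accordingly.) Since you claim to derive the displayed formula exactly, a careful execution of your own first step would have exposed this: what your method actually proves, at unramified primes, carries the prefactor $(\delta\delta')^{-(k-1)/2}(\delta\delta'/T)^{-s}$ in front of the two finite Euler products and $R_{f,g}(s)$.
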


\begin{lem}{\rm[Choie-Kohnen (see page 524 of \cite{CK})]}\label{sp-basis}
For square-free $N$, the space $S_k(N)$ is the orthogonal
direct sum of the one-dimensional subspaces spanned by
$$
F:= f | \prod_{ p | \frac{N}{d}} ( 1 + \epsilon_{f,p} p^{k/2} V_p),
$$ 
where $f$ runs over all normalized Hecke eigenforms in $S_k^{\text{ new }}(d)$
for all $d | N$ and $\epsilon_{f,p}$ runs over the signs~$\pm 1$.
\end{lem}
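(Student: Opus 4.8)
The plan is to reduce the statement to Atkin--Lehner--Li newform theory together with the Rankin--Selberg computation already packaged in Lemma~\ref{ac-RS} and the residue formula \eqref{res}; the latter lets me recover every Petersson inner product $\langle f|V_\delta,\, g|V_{\delta'}\rangle$ from the residue at $s=1$ of the corresponding Rankin--Selberg series. As a first step I would fix the coarse decomposition supplied by newform theory: since $N$ is square-free,
\[
S_k(N) \;=\; \bigoplus_{d \mid N} \bigoplus_{f} V_f, \qquad V_f := \mathrm{span}\{\, f|V_\delta : \delta \mid N/d \,\},
\]
where $f$ runs over the normalized Hecke eigenforms in $S_k^{\text{new}}(d)$ and the oldforms $f|V_\delta$ are linearly independent. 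Hence $\dim V_f = 2^{\omega(N/d)}$, the number of divisors of the square-free integer $N/d$, and a dimension count shows these blocks exhaust $S_k(N)$.

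Second, I would establish that distinct blocks are mutually orthogonal. If $f \in S_k^{\text{new}}(d_1)$ and $g \in S_k^{\text{new}}(d_2)$ are distinct primitive forms, then $R_{f,g}(s)$ is holomorphic at $s=1$ (a pole there detects $f=g$), so \eqref{res} gives $\langle f, g\rangle = 0$. By \eqref{action} the series $R_{f|V_\delta,\, g|V_{\delta'}}(s)$ equals $R_{f,g}(s)$ times factors whose only candidate singularities at $s=1$ come from the denominators $1 - p^{-2s}$; since $1 - p^{-2} \neq 0$, these factors are regular at $s=1$, so $R_{f|V_\delta,\, g|V_{\delta'}}(s)$ is regular there and $\langle f|V_\delta,\, g|V_{\delta'}\rangle = 0$ by \eqref{res}.

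Third, I would diagonalize inside a single block $V_f$. Taking $g=f$ in \eqref{action} and extracting the residue at $s=1$ via \eqref{res}, the Gram entries $\langle f|V_\delta,\, f|V_{\delta'}\rangle$ factor as a product over the primes $p \mid N/d$ (note $p \nmid d$) of local $2\times 2$ blocks, because the prefactor and the two products in \eqref{action} are multiplicative. In the basis $\{f, f|V_p\}$, and up to the common factor $\mathrm{Res}_{s=1}R_{f,f}(s)$, each local block is
\[
\begin{pmatrix} 1 & \dfrac{\lambda_f(p)\, p^{1-k}}{p+1} \\ \dfrac{\lambda_f(p)\, p^{1-k}}{p+1} & p^{-k} \end{pmatrix},
\]
the decisive features being that it is real symmetric (the coefficients are real) and that its lower-right entry is $p^{-k}$ times its upper-left entry. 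Since $k$ is even, $p^{k/2}$ is an integer, and a direct check shows that $f|(1 + \epsilon_{f,p} p^{k/2} V_p)$ for $\epsilon_{f,p} = \pm 1$ are orthogonal local eigenvectors: the off-diagonal contributions cancel by the sign asymmetry, while the diagonal contributions cancel because $(p^{k/2})^2 p^{-k} = 1$. Crucially this holds \emph{independently} of the value $\lambda_f(p)$.

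Finally I would assemble the pieces. As the Gram form on $V_f$ is the tensor product of the local blocks over $p \mid N/d$, the tensor products of the local eigenvectors, namely $F = f\,\big|\,\prod_{p\mid N/d}(1 + \epsilon_{f,p} p^{k/2} V_p)$, form an orthogonal basis of $V_f$, and there are exactly $2^{\omega(N/d)} = \dim V_f$ of them. Combining this with the inter-block orthogonality of the second step and letting $d$, $f$, and the signs $\epsilon_{f,p}$ vary yields the asserted orthogonal decomposition into one-dimensional subspaces. The step I expect to demand the most care is the precise evaluation of \eqref{action} at $s=1$ under the chosen conventions---the level-independent normalized inner product, the normalized eigenvalues $\lambda_f$, and the convention $(f|V_\delta)(z)=f(\delta z)$---since the entire argument rests on the clean identity $\langle f|V_p,\, f|V_p\rangle = p^{-k}\langle f,f\rangle$, which is exactly what forces the eigenvalue-free symmetric combinations $1 \pm p^{k/2} V_p$ to be orthogonal.
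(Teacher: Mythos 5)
Your proof is correct. The paper itself states this lemma without proof, citing page 524 of Choie--Kohnen \cite{CK}, and your reconstruction is essentially the argument of that source: the Atkin--Lehner--Li block decomposition, the Gram matrix of $\{f|V_\delta\}_{\delta \mid N/d}$ extracted from Lemma \ref{ac-RS} via the residue formula \eqref{res}, its tensor-product structure over $p \mid N/d$, and the diagonalization of each local $2\times 2$ block by $(1, \pm p^{k/2})$, which works precisely because $\langle f|V_p, f|V_p\rangle = p^{-k}\langle f, f\rangle$ independently of $\lambda_f(p)$. One small remark on your second step: the holomorphy of $R_{f,g}(s)$ at $s=1$ for distinct newforms is itself most cleanly derived \emph{from} $\langle f,g\rangle = 0$ (strong multiplicity one plus self-adjointness of the $T_n$ with $(n,N)=1$), which is the direction the paper uses elsewhere; your use of \eqref{action} to propagate the vanishing to $\langle f|V_\delta, g|V_{\delta'}\rangle$ is then exactly right.
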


\begin{lem}{\rm[Kohnen-Sengupta \cite{KS}]}
For a square-free integer $N$, let $f \in S_k^{\text{ new}}(N)$ 
be a normalized Hecke eigen form. Then for all $t \in \R$ 
and $1 < \delta  < \frac{1}{\log 2}$, we have
\begin{eqnarray}\label{ebd1}
&& 
\phantom{m}
\left|  L( \text{ sym }^2f, ~ \delta + it) \right|
~~\ll~~
\frac{\delta}{(\delta -1)^3}  \nonumber \\
&& 
\phantom{m} \text{and } \phantom{m}
\left|  L( \text{ sym }^2f,  ~1 - \delta + it) \right|
~~\ll~~
\frac{\delta}{(\delta-1)^3}.~
(kN)^{2\delta-1} . ~|1 + it|^{3(\delta - \frac{1}{2})}.
\nonumber
\end{eqnarray}
\end{lem}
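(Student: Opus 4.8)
The plan is to prove the two estimates separately, using only the Euler product in the region of absolute convergence for the first and the functional equation of the symmetric square $L$-function for the second; in particular no convexity argument is needed, since $\Re(1-\delta)<0$ lies in the range of absolute convergence of the dual function.

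First I would treat $L(\text{sym}^2 f,\delta+it)$ for $\delta>1$. As $f\in S_k^{\text{new}}(N)$ is a normalized Hecke eigenform, its local parameters $\alpha_p,\beta_p$ satisfy $|\alpha_p|=|\beta_p|=1$ at the good primes by Deligne's bound and are bounded by $1$ at the ramified primes, so the three Satake parameters $\{\alpha_p^2,1,\beta_p^2\}$ of the symmetric square all have modulus at most $1$. Bounding each local factor of $L(\text{sym}^2 f,s)=\prod_p\prod_{j}(1-\gamma_{p,j}p^{-s})^{-1}$ in absolute value by the corresponding factor of $\zeta(\delta)^3$ gives $|L(\text{sym}^2 f,\delta+it)|\le \zeta(\delta)^3$. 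Using the elementary bound $\zeta(\delta)\le \frac{\delta}{\delta-1}$ (valid since $1<\delta<\frac{1}{\log 2}$ is bounded) yields $|L(\text{sym}^2 f,\delta+it)|\ll \frac{\delta}{(\delta-1)^3}$, which is the first claim.

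For the second estimate I would invoke the functional equation of the symmetric square. For a newform of squarefree level $N$ the conductor of $\text{sym}^2 f$ equals $N^2$, and the completed function
$$
\Lambda(\text{sym}^2 f,s):= N^{s}\,\gamma_\infty(s)\,L(\text{sym}^2 f,s),\qquad
\gamma_\infty(s)=\pi^{-3s/2}\,\Gamma(\tfrac{s+1}{2})\,\Gamma(\tfrac{s+k-1}{2})\,\Gamma(\tfrac{s+k}{2}),
$$
is entire and satisfies $\Lambda(\text{sym}^2 f,s)=\varepsilon\,\Lambda(\text{sym}^2 f,1-s)$ with $|\varepsilon|=1$. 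Setting $s=\delta-it$, so that $1-s=1-\delta+it$, this rearranges to
$$
L(\text{sym}^2 f,1-\delta+it)=\varepsilon^{-1}\,N^{2\delta-1}\,\frac{\gamma_\infty(\delta-it)}{\gamma_\infty(1-\delta+it)}\,L(\text{sym}^2 f,\delta-it),
$$
where the last factor is already controlled by the first part, and the conductor ratio has modulus exactly $N^{2\delta-1}$.

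It remains to estimate the archimedean ratio $\gamma_\infty(\delta-it)/\gamma_\infty(1-\delta+it)$ by Stirling's formula, uniformly in $k$, $N$ and $t$. The exponential prefactors cancel the $k$-dependence and contribute only a bounded constant for $\delta$ bounded, while each of the three $\Gamma$-quotients contributes a power of $\Re(s-\tfrac12)=\delta-\tfrac12$: the two factors carrying the weight give a total of $(k+|t|)^{2\delta-1}$ and the remaining factor gives $|t|^{\delta-1/2}$. Using $k+|t|\le k(1+|t|)$ for $k\ge 1$ these combine to $k^{2\delta-1}\,|1+it|^{3(\delta-1/2)}$, and together with $N^{2\delta-1}$ and the first bound this produces
$$
|L(\text{sym}^2 f,1-\delta+it)|\ll \frac{\delta}{(\delta-1)^3}\,(kN)^{2\delta-1}\,|1+it|^{3(\delta-1/2)},
$$
as required. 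The main obstacle I anticipate is fixing the precise gamma factors, conductor exponent, and root number for the symmetric square of a newform of squarefree level, and then carrying out the Stirling comparison with fully explicit and uniform control in all of $k$, $N$ and $t$; the squarefreeness of $N$ is exactly what keeps the conductor equal to $N^2$ and the local behaviour transparent.
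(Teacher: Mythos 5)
The paper does not prove this lemma; it is imported verbatim from Kohnen--Sengupta \cite{KS}, and your argument --- absolute convergence of the Euler product (giving $\zeta(\delta)^3\le \delta^3/(\delta-1)^3\ll \delta/(\delta-1)^3$ on the bounded range $1<\delta<1/\log 2$) for the first bound, then the functional equation with conductor $N^2$ plus a Stirling comparison of the three archimedean factors for the second --- is exactly the route taken there. The reconstruction is correct, including the bookkeeping $(k+|t|)^{2\delta-1}(1+|t|)^{\delta-1/2}\le k^{2\delta-1}|1+it|^{3(\delta-1/2)}$; the only point to nail down carefully is, as you note, the precise local factors at $p\mid N$ in the chosen normalization of $L(\mathrm{sym}^2 f,s)$, which for squarefree level change the bound by at most a harmless bounded Euler product.
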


As an immediate Corollary, we have
\begin{cor}\label{bd1}
For a square-free integer $N$, let $f \in S_k^{\text{ new}}(N)$ 
be a normalized Hecke eigen form. Then for all $t \in \R$
and $\Delta = \frac{1}{100}$, we have
\begin{eqnarray}\label{ebd1}
&& 
\phantom{m}
\left|  L( \text{ sym }^2f, ~~ 1 + \Delta + it) \right|
~~\ll~~
1 \\
&& 
\phantom{m} \text{and } \phantom{m}
\left|  L( \text{ sym }^2f,  -\Delta + it) \right|
~~\ll~~
(kN)^{1+ 2\Delta} ~|1 + it|^{\frac{3}{2} (1 + 2 \Delta)}.
\nonumber
\end{eqnarray}
\end{cor}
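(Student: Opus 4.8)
The plan is to obtain this as a direct specialization of the preceding Kohnen--Sengupta lemma, choosing the single value $\delta = 1 + \Delta$ with $\Delta = \frac{1}{100}$ in both of its inequalities. First I would check that this $\delta$ lies in the admissible range $1 < \delta < \frac{1}{\log 2}$: since $\frac{1}{\log 2} = 1.4426\ldots$ and $1 + \frac{1}{100} = 1.01$, the constraint is comfortably satisfied, so the lemma is legitimately available at this value.

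For the first estimate, I would substitute $\delta = 1 + \Delta$ into the first inequality of the lemma, evaluated at the point $\delta + it = 1 + \Delta + it$. The resulting prefactor is $\frac{\delta}{(\delta - 1)^3} = \frac{1 + \Delta}{\Delta^3}$, which is a fixed absolute constant once $\Delta = \frac{1}{100}$ is chosen; absorbing it into the implied constant yields $\left| L(\text{sym}^2 f, 1 + \Delta + it) \right| \ll 1$, as claimed.

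For the second estimate, I would use the same $\delta = 1 + \Delta$ in the second inequality of the lemma, now evaluated at $1 - \delta + it = -\Delta + it$. The prefactor $\frac{\delta}{(\delta - 1)^3}$ is again the same absolute constant and is absorbed into $\ll$. It then remains to rewrite the two surviving factors: the weight-and-level factor becomes $(kN)^{2\delta - 1} = (kN)^{1 + 2\Delta}$, and the archimedean factor becomes $|1 + it|^{3(\delta - 1/2)} = |1 + it|^{3(1/2 + \Delta)} = |1 + it|^{\frac{3}{2}(1 + 2\Delta)}$, which matches the stated expression exactly.

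There is no genuine obstacle here, as the statement is an immediate corollary; the only care required is the elementary bookkeeping that converts the lemma's parameter $\delta$ into the exponents expressed in terms of $\Delta$, together with the observation that a \emph{single} choice $\delta = 1 + \Delta$ simultaneously drives both inequalities and stays strictly below the cutoff $\frac{1}{\log 2}$.
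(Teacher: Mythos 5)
Your proposal is correct and is exactly the specialization the paper intends: the paper labels this an immediate corollary of the Kohnen--Sengupta lemma and gives no further argument, and setting $\delta = 1 + \Delta$ (which indeed satisfies $1 < \delta < \frac{1}{\log 2}$) in both inequalities, absorbing the constant $\frac{\delta}{(\delta-1)^3}$, yields both bounds with the stated exponents. Your bookkeeping $(kN)^{2\delta-1} = (kN)^{1+2\Delta}$ and $3(\delta - \frac{1}{2}) = \frac{3}{2}(1+2\Delta)$ matches precisely.
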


We now state a result of Rademacher which plays
a pivotal role in our work.
\begin{prop}{\rm[Rademacher \cite{HR}]}\label{rad}
Suppose that $g(s)$ is continuous on the closed strip 
$a \le \sigma \le b$ and holomorphic and of finite order
on $a < \sigma < b$. Further, suppose that 
$$
|g(a + it)| \le E|P + a + it|^{\alpha},
\phantom{m}
|g(b + it)| \le F|P + b + it|^{\beta},
$$
where $E, F$ are positive constants and $P, \alpha$ and $\beta$
are real constants that satisfy
$$
P + a > 0, 
\phantom{m}
\alpha \ge \beta.
$$
Then for all $a < \sigma < b$ and $ t \in \R$, we have
$$
|g(\sigma + it)| \le 
(E|P+ \sigma + it |^{\alpha})^{\frac{b - \sigma}{b-a}} 
(F|P+ \sigma + it |^{\beta})^{\frac{\sigma - a}{b-a}}. 
$$
\end{prop}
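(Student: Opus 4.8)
The plan is to prove the statement as a Phragm\'en--Lindel\"of (convexity) argument phrased through the maximum principle for subharmonic functions on the vertical strip, with the logarithm of the asserted bound serving as a \emph{superharmonic} barrier. The decisive point will be that the hypothesis $\alpha \ge \beta$ is exactly what makes this barrier superharmonic. First I would record the role of $P + a > 0$: on the closed strip $a \le \sigma \le b$ one has $\Re(P+s) = P + \sigma \ge P + a > 0$, so $P + s$ never meets the non-positive reals, the principal branch $\log(P+s) = \log|P+s| + i\arg(P+s)$ is single-valued and holomorphic there with $|\arg(P+s)| < \pi/2$, and in particular $\log|P+s|$ is harmonic on the open strip.

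Next I introduce the two comparison functions. Writing $s = \sigma + it$ and setting $u(\sigma) := \frac{b-\sigma}{b-a}$, $v(\sigma) := \frac{\sigma - a}{b-a}$, and $\eta(\sigma) := \alpha\, u(\sigma) + \beta\, v(\sigma)$, the logarithm of the claimed bound is precisely
$$
W(s) := u(\sigma)\log E + v(\sigma)\log F + \eta(\sigma)\log|P+s|.
$$
Let $V(s) := \log|g(s)|$, which is subharmonic on the open strip because $g$ is holomorphic. On the two edges the hypotheses give exactly $V \le W$: at $\sigma = a$ one has $W = \log E + \alpha\log|P+a+it| = \log\bigl(E|P+a+it|^{\alpha}\bigr) \ge \log|g(a+it)| = V$, and similarly at $\sigma = b$.

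The crux is the superharmonicity of $W$. The terms $u(\sigma)\log E$ and $v(\sigma)\log F$ are affine in $\sigma$, hence harmonic, while $\eta(\sigma)\log|P+s|$ is the product of the harmonic function $\log|P+s|$ with the affine function $\eta(\sigma)$, so a short computation gives
$$
\Delta\bigl(\eta(\sigma)\log|P+s|\bigr) = 2\,\eta'(\sigma)\,\partial_\sigma\log|P+s| = 2\,\frac{\beta-\alpha}{b-a}\cdot\frac{P+\sigma}{|P+s|^{2}}.
$$
Since $\alpha \ge \beta$ the slope $\eta'(\sigma) = (\beta-\alpha)/(b-a) \le 0$, whereas $P + \sigma > 0$; hence $\Delta W \le 0$, i.e. $W$ is superharmonic, and therefore $V - W$ is subharmonic on the open strip and $\le 0$ on both edges. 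It is worth noting why the more naive route fails: dividing $g$ by $(P+s)^{\eta(s)}$ (affine holomorphic exponent) and invoking the constant-bound Phragm\'en--Lindel\"of theorem overshoots, because no holomorphic function has modulus exactly $|P+s|^{\eta(\sigma)}$ (that function is not harmonic), and the unavoidable imaginary part of the exponent produces a spurious phase factor $\exp\bigl(\tfrac{\alpha-\beta}{b-a}\,t\,\arg(P+s)\bigr) \ge 1$. This is precisely what the subharmonic comparison circumvents.

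Finally I would apply the maximum principle to $V - W$. The main obstacle is that the strip is unbounded in $t$, so one cannot invoke the maximum principle directly and must control $V - W$ as $t \to \pm\infty$; this is where the finite-order hypothesis enters. I would subtract a regularizer and work with $V - W - \epsilon\,\Re\bigl((P+s)^{\rho}\bigr)$ for small $\epsilon > 0$ and a fixed $\rho < 1$ exceeding the order of $g$ (the finite-order hypothesis guarantees such a regularizer). Because $\rho < 1$ and $|\arg(P+s)| < \pi/2$, the harmonic function $\Re\bigl((P+s)^{\rho}\bigr)$ is positive throughout the strip and grows like $\cos(\rho\pi/2)\,|P+s|^{\rho}$, dominating both the finite-order growth of $V$ and the merely logarithmic growth of $W$. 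Hence the regularized function tends to $-\infty$ at both ends and is $\le 0$ on the two edges, so its supremum over any large rectangle is attained on the finite edges; letting $\epsilon \to 0$ yields $V \le W$ throughout, which is exactly the asserted inequality $|g(\sigma+it)| \le \bigl(E|P+s|^{\alpha}\bigr)^{u(\sigma)}\bigl(F|P+s|^{\beta}\bigr)^{v(\sigma)}$.
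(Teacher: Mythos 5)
The paper itself gives no proof of this proposition — it is quoted from Rademacher \cite{HR} — so your argument can only be judged on its own terms. The core of it is correct and attractive: comparing the subharmonic function $V=\log|g|$ with the barrier $W(s)=u(\sigma)\log E+v(\sigma)\log F+\eta(\sigma)\log|P+s|$, whose superharmonicity comes exactly from $\eta'(\sigma)=(\beta-\alpha)/(b-a)\le 0$ together with $\partial_\sigma\log|P+s|=(P+\sigma)/|P+s|^2>0$, is a legitimate way to obtain the stated inequality without the spurious factor $\exp\bigl(\frac{\alpha-\beta}{b-a}\,t\arg(P+s)\bigr)$ that the naive division by $(P+s)^{h(s)}$ would leave behind. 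The boundary verification and the Laplacian computation are both right.

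The gap is in the final step, where the maximum principle is pushed to the unbounded strip. Your regularizer $\epsilon\,\Re\bigl((P+s)^{\rho}\bigr)$ is positive on the strip only for $\rho\le 1$ (for $\rho>1$ the factor $\cos(\rho\arg(P+s))$ turns negative as $\arg(P+s)\to\pm\pi/2$), and for $\rho<1$ it grows only like $|t|^{\rho}=o(|t|)$. But ``finite order'' on a vertical strip means $\log|g(s)|=O(|t|^{A})$ for some finite $A$, and nothing caps $A$ below $1$; indeed, for the functions this proposition is applied to in the paper (symmetric-square and Rankin--Selberg $L$-functions), the a priori growth available before convexity is at least of the form $\log|g(\sigma+it)|=O(|t|^{1+\epsilon})$, coming from the completed $L$-function being entire of order one. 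So there is in general no $\rho<1$ ``exceeding the order of $g$,'' the regularized function need not tend to $-\infty$ as $t\to\pm\infty$, and the maximum principle on large rectangles does not close. The repair is standard and leaves the rest of your argument untouched: replace the regularizer by the harmonic function $\epsilon\,\cos\bigl(\mu(\sigma-\frac{a+b}{2})\bigr)\cosh(\mu t)$ for any fixed $0<\mu<\pi/(b-a)$. It is bounded below on the closed strip by $\cos(\mu(b-a)/2)\cosh(\mu t)\gg e^{\mu|t|}$, which dominates $|t|^{A}$ for every finite $A$, so $V-W$ minus this regularizer does tend to $-\infty$ at both ends, and letting $\epsilon\to 0$ gives $V\le W$ as desired.
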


\begin{prop}\label{sym-equal}
For square-free integer $N$, let
$f \in S_k^{\text{new}}(N)$ be a normalized Hecke eigen form. 
Then for all  $- \Delta < \sigma < 1 + \Delta$ with 
$\Delta = \frac{1}{100}$ and for any $t \in \R$, we have
\begin{eqnarray*}
\left| L\left( \text{sym }^2f, ~~ \sigma + it \right)\right|
&\ll&
(kN)^{(1 + \Delta - \sigma)}. ~(3+ |t|)^{ \frac{3}{2}(1 + \Delta - \sigma)}~.
\end{eqnarray*}
In particular, for all $t \in \R$, we have
\begin{eqnarray}\label{rs-1}
\left| L\left( \text{sym }^2f, ~~ \frac{3}{4} + it \right)\right|
&\ll&
(kN)^{\frac{1}{4} + \Delta} . ~(3+ |t|)^{\frac{3}{8} + \frac{3\Delta}{2}}~.
\end{eqnarray}
\end{prop}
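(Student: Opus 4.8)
The plan is to apply Rademacher's convexity principle (Proposition~\ref{rad}) directly to the holomorphic function $g(s) := L(\mathrm{sym}^2 f, s)$ on the strip $-\Delta \le \sigma \le 1+\Delta$, using the two boundary bounds already recorded in Corollary~\ref{bd1} as the inputs $E,F,\alpha,\beta$. First I would set $a = -\Delta$ and $b = 1+\Delta$, so that the width of the strip is $b-a = 1+2\Delta$. On the right edge $\sigma = b = 1+\Delta$, Corollary~\ref{bd1} gives $|g(b+it)| \ll 1$, which fits the template $|g(b+it)| \le F|P+b+it|^{\beta}$ with $\beta = 0$ and $F$ an absolute constant. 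On the left edge $\sigma = a = -\Delta$, the same corollary gives $|g(a+it)| \ll (kN)^{1+2\Delta}\,|1+it|^{\frac{3}{2}(1+2\Delta)}$, which fits $|g(a+it)| \le E|P+a+it|^{\alpha}$ with $E = (kN)^{1+2\Delta}$ (up to the implied constant) and $\alpha = \frac{3}{2}(1+2\Delta)$.

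Next I would verify the hypotheses of Proposition~\ref{rad}. We need a $P$ with $P+a>0$ and with the boundary exponential-factor $|P+a+it|$, $|P+b+it|$ comparing to the $(3+|t|)$-type growth I ultimately want; choosing $P = 3$ (or any fixed $P > \Delta$) ensures $P + a = 3 - \Delta > 0$, and then $|P + \sigma + it|$ is comparable to $3 + |t|$ throughout the strip, absorbing the harmless shift between $|1+it|$ and $(3+|t|)$ into the implied constants. The condition $\alpha \ge \beta$ holds since $\alpha = \frac{3}{2}(1+2\Delta) > 0 = \beta$. Rademacher's conclusion then yields, for $-\Delta < \sigma < 1+\Delta$,
\[
|g(\sigma+it)| \ll \left((kN)^{1+2\Delta}(3+|t|)^{\frac{3}{2}(1+2\Delta)}\right)^{\frac{b-\sigma}{b-a}} \cdot 1^{\frac{\sigma-a}{b-a}}.
\]

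It then remains to simplify the interpolated exponent. With $b-a = 1+2\Delta$ and $b-\sigma = (1+\Delta)-\sigma$, the ratio $\frac{b-\sigma}{b-a} = \frac{1+\Delta-\sigma}{1+2\Delta}$, so the power of $kN$ becomes $(kN)^{(1+2\Delta)\cdot\frac{1+\Delta-\sigma}{1+2\Delta}} = (kN)^{1+\Delta-\sigma}$, and likewise the power of $(3+|t|)$ becomes $(3+|t|)^{\frac{3}{2}(1+\Delta-\sigma)}$; this is exactly the claimed general bound. For the special case I would substitute $\sigma = \tfrac{3}{4}$, giving $1+\Delta-\sigma = \tfrac14 + \Delta$, whence $(kN)^{\frac14+\Delta}$ and $(3+|t|)^{\frac{3}{2}(\frac14+\Delta)} = (3+|t|)^{\frac38+\frac{3\Delta}{2}}$, matching \eqref{rs-1}.

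I expect the only genuinely delicate point to be checking that $g(s) = L(\mathrm{sym}^2 f, s)$ actually satisfies the continuity, holomorphy, and finite-order requirements of Proposition~\ref{rad} on the full closed strip: the symmetric square $L$-function is entire (for $f$ a non-dihedral newform, and in any case here with the relevant analytic continuation from \eqref{sym}) and of finite order by standard convexity/Phragmén--Lindelöf theory, so this is routine but should be stated. The arithmetic of pushing the boundary constants $E,F$ through the interpolation is mechanical, and the choice of $P$ to reconcile $|1+it|$ with $(3+|t|)$ is the only place where a small amount of care with implied constants is needed.
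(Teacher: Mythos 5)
Your proposal is correct and follows exactly the paper's own argument: the authors likewise apply Rademacher's Proposition~\ref{rad} on the strip $a=-\Delta \le \sigma \le b=1+\Delta$ with the boundary data of Corollary~\ref{bd1}, taking $E \asymp (kN)^{1+2\Delta}$, $\alpha=\tfrac{3}{2}(1+2\Delta)$, $\beta=0$ (their choice is $P=1+\Delta$ rather than your $P=3$, an immaterial difference absorbed by implied constants), and then simplify the interpolated exponents exactly as you do. Your added remark about checking holomorphy and finite order of $L(\mathrm{sym}^2 f,s)$ on the closed strip is a point the paper leaves implicit, but the route is the same.
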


\begin{proof}
Applying Proposition \ref{rad} 
with
\begin{eqnarray*}
&&
a = -\Delta, \phantom{mm} 
b = P = 1+ \Delta, \\
&&
E = E_1 (kN)^{1+ 2\Delta}, \phantom{m}
\alpha = \frac{3}{2}(1+ 2\Delta), 
\phantom{m}
 \beta =0
\end{eqnarray*}
and $F, E_1$ are absolute constants and finally
using Corollary \ref{bd1}, we get our result.
\end{proof}

We can now deduce the following corollary.
\begin{cor}\label{rs-2}
For square-free integer $N$, let
$f \in S_k^{\text{new}}(N)$ be a normalized Hecke eigen form. 
Then for all $t \in \R$ and any $\epsilon > 0$, we have
$$
\left| R_{f,f} \left( \frac{3}{4} + it \right)\right|
~~~~\ll_{\epsilon}~~~~~~
(kN)^{\frac{1}{4} + \Delta} .~e^{(c ~\sqrt{\frac{\log(N+1)}{\log\log(N+2)}})}
. ~(3+ |t|)^{\frac{11}{24} + \frac{3\Delta}{2} + \epsilon},
$$
where $c > 0$ is an absolute constant and $\Delta = \frac{1}{100}$.
\end{cor}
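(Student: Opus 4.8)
The plan is to read the bound directly off the factorisation \eqref{sym}, estimating each of the three factors of $R_{f,f}(s)$ separately at the point $s = 3/4 + it$. Writing
$$
R_{f,f}(3/4+it) = \Bigl(\prod_{p\mid N}(1+p^{-(3/4+it)})\Bigr)^{-1}\cdot\frac{\zeta(3/4+it)}{\zeta(3/2+2it)}\cdot L(\text{sym}^2 f,\,3/4+it),
$$
one sees that the $(kN)$-aspect and the $\Delta$-contribution to the $t$-exponent will come solely from the symmetric square factor, that the improvement of the $t$-exponent from $\tfrac12$ down to $\tfrac{11}{24}$ must come from the zeta factor, and that the dependence on $N$ through the finite Euler product is what produces the factor $e^{c\sqrt{\log(N+1)/\log\log(N+2)}}$.

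For the symmetric square $L$-function I would invoke \eqref{rs-1} of Proposition \ref{sym-equal} verbatim, giving $\bigl|L(\text{sym}^2 f,\,3/4+it)\bigr| \ll (kN)^{1/4+\Delta}\,(3+|t|)^{3/8+3\Delta/2}$. For the quotient of zeta values the denominator is harmless: since $\Re(3/2+2it)=3/2>1$ the Dirichlet series for $1/\zeta$ converges absolutely there, so $|\zeta(3/2+2it)|^{-1}\le\zeta(3/2)\ll 1$. For the numerator the convexity bound alone only yields $|\zeta(3/4+it)|\ll(3+|t|)^{1/8}$, which would give the exponent $\tfrac38+\tfrac18=\tfrac12$ and is \emph{not} good enough; instead I would use the classical Weyl/van der Corput subconvexity estimate, which at $\sigma=\tfrac34$ reads $|\zeta(3/4+it)|\ll_\epsilon(3+|t|)^{1/12+\epsilon}$. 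Then $\tfrac38+\tfrac1{12}=\tfrac{11}{24}$ accounts exactly for the $t$-exponent in the statement, with the $3\Delta/2$ carried over untouched from the symmetric square bound.

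The remaining and genuinely arithmetic point is the finite Euler product. Since $p^{-3/4}\le 2^{-3/4}<1$ for every prime $p$, one has $|1+p^{-(3/4+it)}|\ge 1-p^{-3/4}>0$, so
$$
\Bigl|\prod_{p\mid N}(1+p^{-(3/4+it)})\Bigr|^{-1} \le \prod_{p\mid N}(1-p^{-3/4})^{-1},
$$
and taking logarithms reduces matters to bounding $\sum_{p\mid N}p^{-3/4}$. Here the naive estimate $\sum_{p\mid N}p^{-3/4}\le\nu(N)\ll\log N/\log\log N$ is too weak, as it would give $e^{c\log N/\log\log N}$; the point is that for squarefree $N$ the constraint $\sum_{p\mid N}\log p=\log N$ forces $\sum_{p\mid N}p^{-3/4}$ to be largest when the prime divisors are as small as possible, i.e.\ the first $\nu(N)$ primes. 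Bounding by $\sum_{p\le P}p^{-3/4}$ with $\theta(P)\asymp\log N$, so that $P\ll\log N$, and then applying partial summation together with the prime number theorem gives $\sum_{p\mid N}p^{-3/4}\ll(\log N)^{1/4}/\log\log N$, which is comfortably $\ll\sqrt{\log(N+1)/\log\log(N+2)}$; exponentiating produces the factor $e^{c\sqrt{\log(N+1)/\log\log(N+2)}}$. Multiplying the three bounds together and absorbing absolute constants then yields the stated estimate. I expect this last Euler-product step, specifically the extremal-primes argument that keeps the $N$-dependence subexponential in $\log N$, to be the main technical obstacle, the two zeta and $L$-function estimates being immediate consequences of Proposition \ref{sym-equal} and a standard subconvexity bound.
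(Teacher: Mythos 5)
Your proposal is correct and follows essentially the same route as the paper: factor $R_{f,f}$ via \eqref{sym}, apply \eqref{rs-1} to the symmetric square factor, use the subconvexity exponent $1/12$ for $\zeta(3/4+it)$ and absolute convergence for $1/\zeta(3/2+2it)$, and bound the finite Euler product by $e^{c\sqrt{\log(N+1)/\log\log(N+2)}}$. The only difference is that the paper simply cites \cite{KS}, page 180, for this last Euler-product estimate, whereas you supply the smallest-primes extremal argument yourself; that argument is sound.
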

\begin{proof}
Recall that
$$
R_{f,f}(s)  = \prod_{ p \text{ prime } \atop  p|N} (1 + p^{-s})^{-1} 
~.~\frac{\zeta(s)}{\zeta(2s)}.
~L(\text{sym}^2f, s).
$$
To complete the proof, we use Proposition \ref{sym-equal}
in addition to the results that
$$
\left|\zeta (\frac{3}{4} +  it) \right| \ll_{\epsilon} 
|1 + it |^{\frac{1}{12} + \frac{\epsilon}{2}},
\phantom{m} 
\left|\frac{1}{\zeta (\frac{3}{2} + \frac{3}{4} it)} \right| 
\ll_{\epsilon}  |1 + it|^{\frac{\epsilon}{2}}
$$ 
and for $\sigma = 3/4$
$$
\prod_{p \text{ prime} \atop p|N} (1 + p^{-s})^{-1} 
~\ll~
\prod_{p \text{ prime} \atop p|N} (1 + p^{-1/2})^{-1} 
~\ll~
e^{(c ~\sqrt{\frac{\log(N+1)}{\log\log(N+2)}})},
$$
where $c >0$ is an absolute constant. For a
proof of the last inequality, see page 180
of \cite{KS}. 
\end{proof}

In fact, we can prove the following general statement. 
\begin{prop}\label{equal}
For a square-free integer $N$, let
$f \in S_k^{\text{new}}(N)$ be a normalized 
Hecke eigen form. 
Also let  $\epsilon > 0$. Then for $\frac{1}{2}  \le \sigma < 1 + \Delta$ with 
$\Delta = \frac{1}{100}$
and $t\in \R$ with $|t| \gg 1$, we have
\begin{eqnarray*}
\left| R_{f,f} \left( \sigma + it \right)\right|
&\ll_{\epsilon}&
(kN)^{(1 + \Delta - \sigma)} .~~e^{(c  ~\sqrt{\frac{\log(N+1)}{\log\log(N+2)}})}. 
 ~(3+ |t| )^{\frac{3}{2}(1+ \Delta - \sigma)  
 + \text{max }\{\frac{1 - \sigma}{3}, ~0 \} ~+~ \epsilon}
\end{eqnarray*}
where $c  > 0$ is an absolute constant. Further for any 
$t \in \R, \epsilon>0$, we have
\begin{eqnarray*}
\left| R_{f,f} \left( \frac{1}{2} + \frac{\Delta}{2} + it \right)\right|
&\ll_{\epsilon}&
(kN)^{\frac{1 + \Delta}{2}}~ .~~e^{(c  ~\sqrt{\frac{\log(N+1)}{\log\log(N+2)}})}. 
 ~(3+ |t| )^{\frac{3}{4}(1+ \Delta) + \frac{33}{200} ~+~ \epsilon}
\end{eqnarray*}
where $c  > 0$ is an absolute constant. 
\end{prop}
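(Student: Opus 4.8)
The plan is to read the bound directly off the Euler factorisation of $R_{f,f}$ recorded in \eqref{sym}, namely
$$
R_{f,f}(s)=\prod_{p\mid N}(1+p^{-s})^{-1}\cdot\frac{\zeta(s)}{\zeta(2s)}\cdot L(\text{sym}^2 f,s),
$$
and to estimate the four factors one at a time on the vertical line $\Re(s)=\sigma$ with $\frac12\le\sigma<1+\Delta$, multiplying the resulting bounds at the end. The symmetric square factor is handled by Proposition \ref{sym-equal}, which contributes exactly $(kN)^{1+\Delta-\sigma}(3+|t|)^{\frac32(1+\Delta-\sigma)}$; this is the sole source of the $(kN)$-dependence and of the leading power of $(3+|t|)$. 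For the finite Euler product over $p\mid N$, since $\sigma\ge\frac12$ one has $|(1+p^{-s})^{-1}|\le(1-p^{-1/2})^{-1}$, and the product over $p\mid N$ is then bounded by $e^{c\sqrt{\log(N+1)/\log\log(N+2)}}$ exactly as in the proof of Corollary \ref{rs-2} (following \cite{KS}, page 180), uniformly in $t$.

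It remains to treat $\zeta(s)/\zeta(2s)$. For the numerator I would use the standard subconvexity estimate in the critical strip: interpolating by Phragm\'en--Lindel\"of (Proposition \ref{rad}) between the Weyl-type bound $\zeta(\frac12+it)\ll(3+|t|)^{1/6+\epsilon}$ and boundedness at $\sigma=1$ gives $\zeta(\sigma+it)\ll(3+|t|)^{\frac13(1-\sigma)+\epsilon}$ for $\frac12\le\sigma\le1$, while $\zeta(\sigma+it)\ll\log(|t|+2)\ll(3+|t|)^{\epsilon}$ for $1\le\sigma<1+\Delta$. For the denominator, note $\Re(2s)=2\sigma\ge1$, so the classical bound $1/\zeta(1+it)\ll\log(|t|+2)$ (and boundedness for $2\sigma>1$) yields $1/\zeta(2s)\ll(3+|t|)^{\epsilon}$. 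Collecting the exponents, the power of $(3+|t|)$ is $\frac32(1+\Delta-\sigma)+\max\{\frac{1-\sigma}{3},0\}+\epsilon$ and the power of $(kN)$ is $1+\Delta-\sigma$, which is the asserted first inequality. The second inequality then follows by specialising to $\sigma=\frac12+\frac\Delta2$: here $1+\Delta-\sigma=\frac{1+\Delta}2$, the leading exponent becomes $\frac34(1+\Delta)$, and $\max\{\frac{1-\sigma}{3},0\}=\frac{1-\Delta}{6}=\frac{33}{200}$ when $\Delta=\frac1{100}$.

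The main delicate point is the pole of $\zeta(s)$---hence of $R_{f,f}(s)$, in agreement with \eqref{res}---at $s=1$. This is precisely why the first statement carries the restriction $|t|\gg1$: it keeps us away from the pole so that the $\zeta$-bounds near $\sigma=1$ are genuinely polynomial in $(3+|t|)$, and it is also responsible for the kink encoded by the $\max\{\frac{1-\sigma}{3},0\}$ term, reflecting the change in the subconvex decay rate as $\sigma$ crosses $1$. By contrast, the specialised line $\sigma=\frac12+\frac\Delta2$ lies strictly to the left of $\Re(s)=1$, so no such restriction is needed and the second estimate holds for all $t\in\R$. Beyond handling this pole carefully, the argument is a routine assembly of the four factor bounds, and I expect no further obstacle.
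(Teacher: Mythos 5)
Your proposal is correct and follows essentially the same route as the paper: factor $R_{f,f}$ via \eqref{sym}, bound the finite Euler product over $p\mid N$ by $e^{c\sqrt{\log(N+1)/\log\log(N+2)}}$, invoke Proposition \ref{sym-equal} for the symmetric square factor, and use the convexity-type estimates $\zeta(\sigma+it)\ll|t|^{(1-\sigma)/3+\epsilon}$ and $1/\zeta(2\sigma+2it)\ll|t|^{\epsilon}$ (the paper simply cites Tenenbaum for these rather than rederiving them by Phragm\'en--Lindel\"of), with the second inequality obtained by specialising to $\sigma=\tfrac12(1+\Delta)$ exactly as you do. Your explanation of the $|t|\gg1$ restriction and the $\max\{\tfrac{1-\sigma}{3},0\}$ kink matches the paper's treatment.
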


\begin{proof}
Note that for $\frac{1}{2}  \le \sigma < 1 + \Delta$, one has
\begin{equation}\label{e1bd}
\prod_{p \text{ prime} \atop p|N} \left(1 + p^{-s} \right)^{-1} 
~~~\ll~~~
e^{(c ~\sqrt{\frac{\log(N+1)}{\log\log(N+2)}} )}~,
\end{equation}
where $c >0$ is an absolute constant. We now use
the following 
estimates (see pages 145-146 of \cite{GT})
$$
\left|\zeta (\sigma +  it) \right| \ll_{\epsilon} |t|^{\frac{1 - \sigma}{3} 
+ \frac{\epsilon}{2}}
\phantom{m} \text{ and }\phantom{m}
\left|\frac{1}{\zeta (2\sigma + 2it)} \right| 
\ll_{\epsilon} |t|^{\frac{\epsilon}{2}}
$$ 
for $t\in \R$ with $|t| \gg 1$ and $1/2 \le \sigma \le 1$
to complete the proof of the first part. 
For the second part of the Proposition, we proceed
exactly as in Corollary \ref{rs-2} and use the
estimates
$$
\left|\zeta (\frac{1}{2}(1+ \Delta) +  it) \right| 
\ll_{\epsilon} |t|^{\frac{33}{200} 
+ \epsilon}
\phantom{m} \text{ and }\phantom{m}
\left|\frac{1}{\zeta ( 1 + \Delta + 2it)} \right| 
\ll 1
$$ 
for any $t\in \R$.
\end{proof}

Now we would like to estimate the Rankin-Selberg
$L$-function for two distinct newforms $f$ and $g$. 
Here we have the following Proposition. 
\begin{prop}\label{ran}
For square-free integers $N_1, N_2$, let $f \in S_k^{\text{ new}}(N_1)$, 
$g \in S_k^{\text{ new }}(N_2)$ be normalized Hecke eigen forms
with $f \ne g$. Then for any $t \in \R, ~\Delta = \frac{1}{100}$
and $N := \text{lcm} (N_1, N_2)$, we have
\begin{eqnarray}\label{bound}
\left| \zeta_N(2 + 2\Delta + 2it)~.~ R_{f, g} (~ 1+ \Delta + it ~) \right|
&\ll& 1 \nonumber \\
\phantom{m} \text{and } \phantom{m}
\left|\zeta_N(- 2\Delta + 2it)~.~ R_{f, g} ( - \Delta + it) \right|
&\ll &
\tau(N).~ (kN)^{1+ 2\Delta}.~N^{\Delta}
 .~ |1 + it |^{2 + 4\Delta}, \nonumber
\end{eqnarray}
where $\tau(N)$ is the number of divisors of $N$
and for $s \in \C$ with $\Re(s) > 1$, one defines
$$
\zeta_N(s) 
= 
\prod_{ p \text{ prime} \atop p \nmid N} 
( 1 - p^{-s})^{-1} ~.
$$
\end{prop}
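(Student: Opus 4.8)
The plan is to prove the two displayed estimates separately: the bound on the line $\sigma = 1 + \Delta$ follows from absolute convergence, while the bound on $\sigma = -\Delta$ is obtained by reflecting the first one across the strip through the functional equation of the completed Rankin--Selberg series. Throughout I write $s = \sigma + it$ and keep in mind that $\Delta = \tfrac{1}{100}$ is fixed, so anything depending only on $\Delta$ is an absolute constant.

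\emph{The line $\sigma = 1 + \Delta$.} Here both Dirichlet series converge absolutely. Using Deligne's bound $|\lambda_f(n)|, |\lambda_g(n)| \le \tau(n)$ together with the identity $\sum_{n \ge 1}\tau(n)^2 n^{-\sigma} = \zeta(\sigma)^4/\zeta(2\sigma)$, I get $|R_{f,g}(1+\Delta+it)| \le \sum_{n\ge 1}\tau(n)^2 n^{-1-\Delta} \ll 1$, while $|\zeta_N(2+2\Delta+2it)| \le \zeta(2+2\Delta) \ll 1$. Multiplying gives the first estimate with an absolute implied constant.

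\emph{The line $\sigma = -\Delta$.} Since $f \ne g$ are distinct newforms they have different systems of Hecke eigenvalues at the good primes, hence are orthogonal in $S_k(N)$, so by the remarks following \eqref{res} the completed function $R_{f,g}^*(s) = N^s(2\pi)^{-2s}\Gamma(s)\Gamma(s+k-1)\zeta_N(2s)R_{f,g}(s)$ is entire of finite order and (being of Rankin--Selberg type) satisfies a functional equation under $s \mapsto 1-s$. Solving this equation for $\zeta_N(2s)R_{f,g}(s)$ at $s = -\Delta + it$ expresses it as $N^{1-2s}(2\pi)^{4s-2}\,\frac{\Gamma(1-s)\Gamma(k-s)}{\Gamma(s)\Gamma(s+k-1)}$ times $\zeta_N(2-2s)R_{f,g}(1-s)$, the latter evaluated at $1-s = 1+\Delta - it$, where the first part already gives a bound $\ll 1$. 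Now $|N^{1-2s}| = N^{1+2\Delta}$ and $|(2\pi)^{4s-2}| \ll 1$. By Stirling, and using $|\Gamma(\bar z)| = |\Gamma(z)|$, the first gamma quotient is $\asymp (1+|t|)^{1+2\Delta}$ and the second, whose shift in real part is again $1+2\Delta$ but around height $k$, is $\ll (k+|t|)^{1+2\Delta} \le k^{1+2\Delta}(1+|t|)^{1+2\Delta}$ since $k \ge 1$. Collecting these factors produces $(kN)^{1+2\Delta}|1+it|^{2+4\Delta}$, which is the claimed bound up to the additional factor $\tau(N)N^\Delta$.

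The main obstacle, and the sole source of the extra $\tau(N)N^\Delta$, is that $f$ and $g$ are newforms of the possibly distinct levels $N_1, N_2$ whereas $R_{f,g}$ is completed with the level-$N$ factor $\zeta_N(2s)$. Consequently the functional equation is not the clean reflection valid for a single newform's convolution: it carries correction Euler factors at each prime $p \mid N$, coming from the mismatch between $\zeta_N(2s)$ and the true bad local factors of the Rankin--Selberg $L$-function. I would establish this corrected functional equation and bound the corrections on the line $\sigma = -\Delta$, where each of the $O(\tau(N))$ bad factors is controlled and, since the product runs over the (at most $\omega(N)$) primes dividing the squarefree $N$, the accumulated growth is at most $\prod_{p\mid N}p^{\Delta} = N^{\Delta}$; this yields precisely $\tau(N)N^\Delta$. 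Setting up this corrected functional equation and verifying the stated polynomial control of its bad factors is the technical heart of the argument, the remaining steps being the routine Stirling and absolute-convergence estimates above.
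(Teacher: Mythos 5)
Your proposal is correct and follows essentially the same route as the paper: absolute convergence (via Deligne's bound) on $\sigma = 1+\Delta$, then reflection through the functional equation of $R^*_{f,g}$ combined with Stirling on $\sigma = -\Delta$, with the extra $\tau(N)N^{\Delta}$ coming from the Atkin--Lehner correction Euler factors at the bad primes, exactly as in the paper (which takes the corrected functional equation, with factors $\prod_{p \mid D}(1 - w_p v_p p^{-s})^{-1}$ for $D = \gcd(N_1,N_2)$, from Ogg's paper cited as \cite{AO}, and bounds each factor by $1+p^{\Delta} \le 2p^{\Delta}$ to get $\tau(D)D^{\Delta} \le \tau(N)N^{\Delta}$). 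The only piece you defer --- the precise shape of that corrected functional equation --- is available off the shelf rather than needing to be established from scratch.
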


\begin{proof}
The first inequality follows as both $\zeta_N(2 + 2\Delta + 2it)$ and
$R_{f, g} (~ 1+ \Delta + it ~)$ are absolutely convergent. Using
the functional equation (see \cite{AO} for details)
$$
\prod_{p \text{ prime} \atop p |D} (1 - 
w_p v_p p^{-s})^{-1} ~R^*_{f,g}(s)
 ~=~
 \prod_{p \text{ prime} \atop p |D} (1-w_pv_p 
 p^{-(1-s)})^{-1} ~R^*_{f,g}(1-s),
$$
we have
\begin{eqnarray*}
\zeta_N(2 - 2s) ~.~ R_{f,g}(1- s)
 &=&
 (2\pi)^{2 - 4s}~. ~N^{2s -1} 
 ~.~\frac{\Gamma(s)}{ \Gamma(1-s)}~. 
 ~\frac{\Gamma(k-1 +s)}{\Gamma(k-s)} \\
 &&
 ~.~  \prod_{p \text{ prime} \atop p |D} \frac{(1-w_pv_p 
 p^{s-1})}{(1 -w_p v_p p^{-s})}
~ .~ \zeta_N(2s)~. ~ R_{f,g}(s) ~, 
\end{eqnarray*}
where $D := \text{ gcd}(N_1, N_2)$ and $w_p, v_p \in \{\pm 1 \}$ are
the eigenvalues of $f$ and $g$ respectively for the corresponding 
Atkin-Lehner involutions.  Now using Stirlings formula (see page 178 
of \cite{KS}), one has
\begin{eqnarray*}
\left| \frac{\Gamma(k + \Delta + it)}{ \Gamma(k -1-\Delta + it)}\right|
&\ll& 
k^{1+ 2\Delta} ~|1+ it|^{1 + 2\Delta}, \\
\text{and} \phantom{mm}
\left|\frac{\Gamma(1 + \Delta + it)}{ \Gamma(-\Delta + it)} \right| 
&\ll&
 | 1+ it|^{1+ 2\Delta} \phantom{mm} \text{ for all } t\in \R.
\end{eqnarray*}
Also for all $t \in \R$, one has
\begin{eqnarray*}
| \prod_{p \text{ prime} \atop p |D} 
(1 -w_p v_p p^{-1 - \Delta + it})^{-1}|
 &\le&
 \prod_{p \text{ prime} \atop p |D} (1 - \frac{1}{p^{1+ \Delta}})^{-1}
~~\ll~  1, \\
 \text{and} \phantom{m}
 | \prod_{p \text{ prime} \atop p |D} (1-w_pv_p p^{\Delta + it})|
&\ll&
 \prod_{p \text{ prime} \atop p |D} (1 + p^{\Delta})
 ~~\ll~~ 
  \prod_{p \text{ prime} \atop p |D} 2p^{\Delta}
  ~=~ \tau(D).~D^{\Delta}~.
\end{eqnarray*}
Hence we have
\begin{eqnarray*}
|\zeta_N(- 2\Delta + 2it)~.~ R_{f, g} ( - \Delta + it) |
&\ll &
\tau(N).~ (kN)^{1+ 2\Delta}. ~N^{\Delta}~
 .~ |1 + it|^{2 + 4\Delta}~.
\end{eqnarray*}
This completes the proof of the Proposition.
\end{proof}

Now using Proposition \ref{rad} and Proposition \ref{ran}, 
we get 
\begin{prop}\label{diff}
For square-free integers $N_1, N_2$, let
$f \in S_k^{\text{new}}(N_1), ~g \in S_k^{\text{new}}(N_2)$
be normalized Hecke eigen forms with $f \ne g$ 
and $N := \text{ lcm }(N_1, N_2)$. Then for any
$t \in \R$ and $- \Delta < \sigma < 1 + \Delta$
with $\Delta = \frac{1}{100}$, we have
$$
\left| \zeta_N(2\sigma + 2it) ~.~ R_{f, g} \left( \sigma + it \right)\right|
~\ll~~~
(kN)^{1+ \Delta - \sigma} .~(\tau(N).~N^{\Delta})^{\frac{1+\Delta 
- \sigma}{1+ 2\Delta}} 
~.~(3 + |t|)^{2(1+\Delta - \sigma)}.
$$
In particular for any $t \in \R, ~\epsilon>0$ and 
$\frac{1}{2}  \le \sigma < 1 + \Delta$, we have
\begin{eqnarray}\label{rq}
\left|  R_{f, g} \left( \sigma + it \right)\right|
&\ll_{\epsilon}&
 (kN)^{1+ \Delta - \sigma} .~(\tau(N).~N^{\Delta})^{\frac{1
 +\Delta - \sigma}{1+ 2\Delta}}~. \\
&&
\phantom{mmmmm}
~~ \log\log(N+2)
~.~(3 + |t|)^{2(1+\Delta - \sigma) + \epsilon}. \nonumber
\end{eqnarray}
\end{prop}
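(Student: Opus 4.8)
The plan is to apply the Phragm\'en--Lindel\"of type convexity principle of Proposition \ref{rad} to the auxiliary function
$$
G(s) := \zeta_N(2s)\, R_{f,g}(s)
$$
on the vertical strip $-\Delta \le \sigma \le 1 + \Delta$, using precisely the two boundary estimates furnished by Proposition \ref{ran}. Before invoking Proposition \ref{rad} I must verify its hypotheses, namely that $G$ is continuous on the closed strip and holomorphic of finite order on its interior. Since $f \ne g$ are distinct normalized newforms, their systems of Hecke eigenvalues at primes not dividing $N$ differ, so $f$ and $g$ are orthogonal and $\langle f, g\rangle = 0$; hence, by the discussion preceding \eqref{res}, the completed function $R_{f,g}^*(s) = N^s (2\pi)^{-2s}\Gamma(s)\Gamma(s+k-1)\zeta_N(2s)R_{f,g}(s)$ is entire and of finite order. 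Writing
$$
G(s) = R_{f,g}^*(s)\,\frac{(2\pi)^{2s}}{N^s\,\Gamma(s)\,\Gamma(s+k-1)},
$$
the only potential pole in the strip would arise from the pole of $\Gamma(s)$ at $s=0$, but $1/\Gamma(s)$ vanishes there, so $G$ is holomorphic throughout; its finite order follows from Stirling's estimate for $1/|\Gamma|$. This verification is the one place calling for a little care, and I regard it as the main (if modest) obstacle; everything afterwards is bookkeeping with exponents.

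Next I read off the boundary data from Proposition \ref{ran}: on $\sigma = 1+\Delta$ one has $|G(1+\Delta+it)| \ll 1$, while on $\sigma = -\Delta$ one has $|G(-\Delta+it)| \ll \tau(N)(kN)^{1+2\Delta}N^{\Delta}\,|1+it|^{2+4\Delta}$. I then apply Proposition \ref{rad} with $a = -\Delta$, $b = 1+\Delta$, the shift $P = 1+\Delta$ (so that $P+a = 1 > 0$ and $P+a+it = 1+it$), exponents $\alpha = 2+4\Delta \ge \beta = 0$, and constants $F$ absolute, $E \ll \tau(N)(kN)^{1+2\Delta}N^{\Delta}$. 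Using $b-a = 1+2\Delta$ and $b-\sigma = 1+\Delta-\sigma$, the conclusion reads, for $-\Delta < \sigma < 1+\Delta$,
$$
|G(\sigma+it)| \ll \Big(\tau(N)(kN)^{1+2\Delta}N^{\Delta}\Big)^{\frac{1+\Delta-\sigma}{1+2\Delta}}\,|P+\sigma+it|^{(2+4\Delta)\frac{1+\Delta-\sigma}{1+2\Delta}}.
$$
Simplifying the exponents shows that $kN$ acquires exponent $1+\Delta-\sigma$, the remaining arithmetic factor becomes $(\tau(N)N^{\Delta})^{(1+\Delta-\sigma)/(1+2\Delta)}$, and the $t$-exponent collapses to $2(1+\Delta-\sigma)$; using $|P+\sigma+it| = |1+\Delta+\sigma+it| \le 3+|t|$ throughout the strip (as $1+\Delta+\sigma < 2+2\Delta < 3$) yields exactly the first asserted inequality.

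Finally, to pass to the bound on $R_{f,g}$ itself for $\tfrac12 \le \sigma < 1+\Delta$, I divide by $\zeta_N(2\sigma+2it)$ and bound $1/|\zeta_N(2\sigma+2it)|$ from above. Factoring $\zeta_N(2s) = \zeta(2s)\prod_{p|N}(1-p^{-2s})$, the finite Euler factors give
$$
\prod_{p|N}\big|1-p^{-2\sigma-2it}\big|^{-1} \le \prod_{p|N}\big(1-p^{-2\sigma}\big)^{-1} \le \prod_{p|N}\big(1-\tfrac1p\big)^{-1} \ll \log\log(N+2),
$$
using $2\sigma \ge 1$ and the standard estimate $N/\phi(N) \ll \log\log(N+2)$. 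For the Riemann factor, since $\zeta$ has no zeros on $\Re \ge 1$ and $2\sigma \ge 1$, the value $2\sigma+2it = 1$ being a zero and not a pole of $1/\zeta$, one has $|1/\zeta(2\sigma+2it)| \ll \log(3+|t|) \ll_{\epsilon} (3+|t|)^{\epsilon}$. Inserting these two bounds into the first inequality and combining the powers of $3+|t|$ produces the claimed estimate \eqref{rq}, with the extra factor $\log\log(N+2)$ and exponent $2(1+\Delta-\sigma)+\epsilon$ on $3+|t|$, completing the proof.
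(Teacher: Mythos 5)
Your proposal is correct and follows essentially the same route as the paper: both apply Rademacher's convexity principle (Proposition \ref{rad}) to $\zeta_N(2s)R_{f,g}(s)$ with $a=-\Delta$, $b=P=1+\Delta$, $\alpha=2+4\Delta$, $\beta=0$ and the boundary data of Proposition \ref{ran}, then remove the factor $\zeta_N(2\sigma+2it)$ for $\sigma\ge \tfrac12$ via $\prod_{p\mid N}(1-p^{-1})^{-1}\ll\log\log(N+2)$ and $|1/\zeta(2\sigma+2it)|\ll_{\epsilon}(3+|t|)^{\epsilon}$. Your explicit verification that the pole of $\Gamma(s)$ at $s=0$ cancels, so that $\zeta_N(2s)R_{f,g}(s)$ is entire of finite order when $\langle f,g\rangle=0$, is a welcome elaboration of the step the paper delegates to the references.
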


\begin{proof}
 From the theory of newforms, we know that
$<f, g> = 0$. In this case, we know that $\zeta_N(2s)R_{f,g}(s)$ is entire 
and  is of finite order (see \cite{{WL},{AO}} for details). Set
\begin{eqnarray*}
&&
a = -\Delta~, 
\phantom{mm}
b = P = 1+ \Delta, \\
&&
E =  E_1(kN)^{1+ 2\Delta} .~\tau(N).~N^{\Delta}, \phantom{m}
\alpha =  2 + 4\Delta, \phantom{m}
\beta = 0
\end{eqnarray*}
and $F, E_1$ are absolute constants.  We now use Proposition \ref{ran} 
and apply Proposition \ref{rad} in the strip $a \le \sigma \le b$
to conclude the first part of our result. 
Further note that for $\frac{1}{2} \le \sigma < 1+ \Delta$,
one has
$$
| \zeta_N(2\sigma + 2it)^{-1} |
~~\ll~~
\frac{ \prod_{ p \text{ prime } \atop p|N} (1 + p^{-1}) }
{|\zeta(2\sigma + 2it) |}
~~\ll_{\epsilon}~~
\log\log(N+2)~.~| 1+ it |^{\epsilon}~.
$$
This completes the proof of \eqref{rq}.
\end{proof}

As an immediate Corollary, we have
\begin{cor}\label{rs1}
For square-free integers $N_1, N_2$, let
$f \in S_k^{\text{new}}(N_1), ~g \in S_k^{\text{new}}(N_2)$
be normalized Hecke eigen forms with $f \ne g$ 
and $N := \text{ lcm }(N_1, N_2)$. Then for any
$t \in \R$ and $\Delta = \frac{1}{100}$, we have 
\begin{eqnarray*}
\left|  R_{f, g} \left( \frac{1}{2} + \frac{\Delta}{2} + it \right)\right|
&\ll_{\epsilon}&
 (kN)^{\frac{1}{2}(1+ \Delta)}.~(\tau(N).~N^{\Delta}
 )^{\frac{(1+ \Delta)}{2(1+ 2\Delta)}}~.
~\log\log(N+2)~.~~(3 + |t| )^{ 1 + \Delta + \epsilon} \\
\left|  R_{f, g} \left( \frac{3}{4} + it \right)\right|
&\ll_{\epsilon}&
 (kN)^{\frac{1}{4} + \Delta}.~(\tau(N).~N^{\Delta})^{\frac{(1
 + 4\Delta)}{4(1+ 2\Delta)}}~.
~\log\log(N+2)~.~~(3 + |t| )^{\frac{1}{2} + 2\Delta + \epsilon}. \nonumber
\end{eqnarray*}
\end{cor}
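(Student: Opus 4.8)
The plan is to read off both estimates as direct specializations of Proposition \ref{diff}, which already supplies, for every abscissa $\sigma$ in the range $\frac{1}{2} \le \sigma < 1 + \Delta$, the bound
\begin{equation*}
\left| R_{f, g}(\sigma + it) \right| \ll_{\epsilon} (kN)^{1 + \Delta - \sigma} ~(\tau(N)~N^{\Delta})^{\frac{1 + \Delta - \sigma}{1 + 2\Delta}} ~\log\log(N+2) ~(3 + |t|)^{2(1 + \Delta - \sigma) + \epsilon}.
\end{equation*}
Since $f \ne g$, the theory of newforms gives $<f, g> = 0$, so the hypotheses of Proposition \ref{diff} are satisfied and this inequality is available with no further input; the whole proof is then the substitution of two specific values of $\sigma$.

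For the first displayed bound I would take $\sigma = \frac{1}{2} + \frac{\Delta}{2} = \frac{1 + \Delta}{2}$, which lies in $[\frac{1}{2}, 1 + \Delta)$ because $\Delta = \frac{1}{100}$. Then $1 + \Delta - \sigma = \frac{1 + \Delta}{2}$, so the $(kN)$-power becomes $(kN)^{\frac{1}{2}(1+\Delta)}$, the exponent on $\tau(N)~N^{\Delta}$ becomes $\frac{1 + \Delta}{2(1 + 2\Delta)}$, and the $t$-exponent becomes $2 \cdot \frac{1 + \Delta}{2} + \epsilon = (1 + \Delta) + \epsilon$, which is exactly the first claimed inequality.

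For the second bound I would take $\sigma = \frac{3}{4}$, again in the admissible strip. Here $1 + \Delta - \sigma = \frac{1}{4} + \Delta$, yielding the factor $(kN)^{\frac{1}{4} + \Delta}$; the exponent on $\tau(N)~N^{\Delta}$ simplifies as $\frac{1/4 + \Delta}{1 + 2\Delta} = \frac{1 + 4\Delta}{4(1 + 2\Delta)}$; and the $t$-exponent is $2(\frac{1}{4} + \Delta) + \epsilon = \frac{1}{2} + 2\Delta + \epsilon$. Collecting these factors recovers the second displayed inequality.

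Because the corollary is a pure specialization, there is no genuine obstacle: the only things to check are the elementary rewriting of the three exponents (as above) and the trivial verification that both chosen abscissae $\frac{1 + \Delta}{2}$ and $\frac{3}{4}$ indeed satisfy $\frac{1}{2} \le \sigma < 1 + \Delta$, so that Proposition \ref{diff} genuinely applies at each of them.
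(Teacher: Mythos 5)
Your proposal is correct and matches the paper exactly: the paper states this corollary as an ``immediate'' consequence of Proposition \ref{diff} with no separate argument, and your substitutions $\sigma = \tfrac{1}{2} + \tfrac{\Delta}{2}$ and $\sigma = \tfrac{3}{4}$ into the bound \eqref{rq}, together with the exponent simplifications, are precisely the intended verification. All three exponent computations and the range check $\tfrac{1}{2} \le \sigma < 1 + \Delta$ are accurate.
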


Combining Proposition \ref{diff} and Proposition \ref{equal}, 
we now have the following statement.
\begin{prop}\label{any}
For a square-free integer $N$ and integers $N_1, N_2 | N$, let
$f \in S_k^{\text{new}}(N_1), ~g \in S_k^{\text{new}}(N_2)$
be normalized Hecke eigen forms, not necessarily distinct. 
Also let  $\epsilon > 0$. Then 
for $\frac{1}{2}  \le \sigma  < 1 + \Delta$ 
with $\Delta = \frac{1}{100}$ and $t \in \R$ with $|t| \gg 1$, 
we have
\begin{eqnarray*}
\left|  
R_{f, g} \left( \sigma + it \right) 
\right|
&\ll_{\epsilon} &
(kN)^{1+ \Delta - \sigma}.~(\tau(N).~N^{\Delta}
)^{\frac{1+ \Delta - \sigma}{1+ 2\Delta}}. 
 ~e^{(c_1 ~\sqrt{\frac{\log(N+1)}{\log\log(N+2)}} )}~.~
(3 + |t| )^{ 2(1+ \Delta - \sigma) ~+~ \epsilon},
\end{eqnarray*}
where $c_1 > 0$ is an absolute constant.
\end{prop}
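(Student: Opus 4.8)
The plan is to derive this uniform estimate by reducing to the two cases $f\neq g$ and $f=g$, which are governed by Proposition \ref{diff} and Proposition \ref{equal} respectively, and then to check that the single bound asserted here dominates each of the two sharper bounds already proved. First I would dispose of the case $f\neq g$. Here the estimate \eqref{rq} of Proposition \ref{diff} gives, for $\tfrac12\le\sigma<1+\Delta$,
$$
\left| R_{f,g}(\sigma+it)\right|
\ll_\epsilon (kN)^{1+\Delta-\sigma}\,
(\tau(N)\,N^{\Delta})^{\frac{1+\Delta-\sigma}{1+2\Delta}}\,
\log\log(N+2)\,(3+|t|)^{2(1+\Delta-\sigma)+\epsilon},
$$
which is exactly the asserted bound except that the factor $\log\log(N+2)$ stands in place of $e^{c_1\sqrt{\log(N+1)/\log\log(N+2)}}$. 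Since $\sqrt{\log(N+1)/\log\log(N+2)}\to\infty$ with $N$, the exponential on the right outgrows every power of $\log\log N$, so $\log\log(N+2)\ll e^{c_1\sqrt{\log(N+1)/\log\log(N+2)}}$ for a suitable absolute constant $c_1>0$, and this case is settled.

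For the remaining case $f=g$ one necessarily has $N_1=N_2=:M$ with $M\mid N$ and $f\in S_k^{\text{new}}(M)$, and I would invoke Proposition \ref{equal} at level $M$ to obtain
$$
\left| R_{f,f}(\sigma+it)\right|
\ll_\epsilon (kM)^{1+\Delta-\sigma}\,
e^{c\sqrt{\log(M+1)/\log\log(M+2)}}\,
(3+|t|)^{\frac32(1+\Delta-\sigma)+\max\{\frac{1-\sigma}{3},0\}+\epsilon}.
$$
Because $1+\Delta-\sigma>0$ and $\log(x+1)/\log\log(x+2)$ is increasing for large $x$, replacing $M$ by $N$ only enlarges the first two factors, producing $(kN)^{1+\Delta-\sigma}$ and $e^{c\sqrt{\log(N+1)/\log\log(N+2)}}$; the extra factor $(\tau(N)N^{\Delta})^{(1+\Delta-\sigma)/(1+2\Delta)}\ge1$ present in the asserted bound can only help.

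The one point that requires genuine care, and which I regard as the crux, is the bookkeeping of the $(3+|t|)$-exponent in the case $f=g$: I must verify that $\frac32(1+\Delta-\sigma)+\max\{\frac{1-\sigma}{3},0\}\le 2(1+\Delta-\sigma)$ throughout $\tfrac12\le\sigma<1+\Delta$. For $\sigma>1$ the maximum is $0$ and the inequality reduces to $\sigma\le 1+\Delta$, which holds by hypothesis; for $\sigma\le1$ it reads $\frac{1-\sigma}{3}\le\frac12(1+\Delta-\sigma)$, equivalently $\sigma\le 1+3\Delta$, which again holds since $\sigma<1+\Delta$. Thus the term $\max\{\frac{1-\sigma}{3},0\}$, arising from the convexity growth of $\zeta(\sigma+it)$ in the critical strip, fits comfortably under the uniform exponent $2(1+\Delta-\sigma)$, and here it is essential that one assumes $\sigma<1+\Delta$ rather than merely $\sigma\le 1$. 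Finally, taking $c_1$ large enough to absorb the constant $c$ of Proposition \ref{equal} and to dominate $\log\log(N+2)$ as above completes the argument; no further analytic input is needed beyond the two propositions already established.
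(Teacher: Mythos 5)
Your proof is correct and follows essentially the same route as the paper, which obtains Proposition \ref{any} precisely by combining Proposition \ref{diff} (for $f\neq g$) with Proposition \ref{equal} (for $f=g$) and leaves the dominance checks implicit. Your explicit verification that $\tfrac32(1+\Delta-\sigma)+\max\{\tfrac{1-\sigma}{3},0\}\le 2(1+\Delta-\sigma)$ and that $\log\log(N+2)\ll e^{c_1\sqrt{\log(N+1)/\log\log(N+2)}}$ is exactly the bookkeeping the paper omits.
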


Combining Proposition \ref{equal}, Corollary \ref{rs-2} and 
Corollary \ref{rs1}, we now get
\begin{prop}\label{half}
For a square-free integer $N$ and integers $N_1, N_2 | N$, let
$f \in S_k^{\text{new}}(N_1), ~g \in S_k^{\text{new}}(N_2)$
be normalized Hecke eigen forms, not necessarily distinct. 
Then for any $\epsilon > 0, ~ t \in \R, ~\Delta = \frac{1}{100}$, 
we have
\begin{eqnarray*}
\left| R_{f,g} \left( \frac{1}{2} + \frac{\Delta}{2} + it \right)\right|
&\ll_{\epsilon}&
 (kN)^{\frac{1}{2}(1+ \Delta)}.~(\tau(N).~N^{\Delta}
 )^{\frac{1+ \Delta}{2(1+ 2\Delta)}}~
. ~e^{(c_1 ~\sqrt{\frac{\log(N+1)}{\log\log(N+2)}} )}
.~~(3 + |t| )^{ 1 + \Delta + \epsilon} \\
\text{and} \phantom{mm}
\left| R_{f,g} \left( \frac{3}{4}  + it \right)\right|
&\ll_{\epsilon}&
 (kN)^{\frac{1}{4} + \Delta}.~(\tau(N).~N^{\Delta}
 )^{\frac{1+ 4 \Delta}{4(1+ 2\Delta)}}~
. ~e^{(c_1 ~\sqrt{\frac{\log(N+1)}{\log\log(N+2)}} )}
.~~(3 + |t| )^{ \frac{1}{2} + 2 \Delta + \epsilon}, \nonumber
\end{eqnarray*}
where $c_1 > 0$ is an absolute constant.
\end{prop}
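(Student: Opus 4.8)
The plan is to split into the off-diagonal case $f \neq g$ and the diagonal case $f = g$, prove each against the single bound claimed in the Proposition, and then take the larger of the two. I would begin with the case $f \neq g$, which is essentially immediate from Corollary \ref{rs1}: that corollary supplies precisely the two estimates at $\sigma = \frac{1}{2} + \frac{\Delta}{2}$ and $\sigma = \frac{3}{4}$ with identical $(kN)$-exponents, identical exponents on $\tau(N) N^{\Delta}$, and identical $(3+|t|)$-exponents, the only discrepancy being a factor $\log\log(N+2)$ in place of the exponential $e^{(c_1\sqrt{\log(N+1)/\log\log(N+2)})}$. Since $\log\log(N+2) \ll e^{(c_1\sqrt{\log(N+1)/\log\log(N+2)})}$ for a suitable absolute $c_1 > 0$ — the same elementary inequality used in the proof of Theorem \ref{thmone} — this factor is absorbed into the exponential, settling the off-diagonal case.

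For the diagonal case $f = g$ I would instead call on the sharper diagonal estimates: the second part of Proposition \ref{equal} at $\sigma = \frac{1}{2} + \frac{\Delta}{2}$ and Corollary \ref{rs-2} at $\sigma = \frac{3}{4}$. Here $f$ is a newform of some level $N_1 \mid N$, so I would first apply those results with $N_1$ in place of $N$ and then use $N_1 \le N$ together with the monotonicity of the $(kN_1)$-power and of $e^{(c\sqrt{\log(N_1+1)/\log\log(N_1+2)})}$ to pass to the ambient level $N$. It then remains only to check that the diagonal bounds are majorized by the claimed ones. The $(kN)$-exponents already agree, the factor $\tau(N) N^{\Delta}$ (raised to its positive exponent) appearing in the claim is $\geq 1$ and may be inserted for free, and the exponential factors coincide after enlarging $c_1$ if necessary; so the entire comparison reduces to the $(3+|t|)$-exponents.

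The one genuinely numerical point is therefore the verification that the diagonal $(3+|t|)$-exponents do not exceed the claimed ones. At $\sigma = \frac{3}{4}$ Corollary \ref{rs-2} gives exponent $\frac{11}{24} + \frac{3\Delta}{2}$, which for $\Delta = \frac{1}{100}$ is strictly below the claimed $\frac{1}{2} + 2\Delta$; at $\sigma = \frac{1}{2} + \frac{\Delta}{2}$ the second part of Proposition \ref{equal} gives exponent $\frac{3}{4}(1+\Delta) + \frac{33}{200}$, again strictly below the claimed $1 + \Delta$. Because $3 + |t| > 1$, a smaller exponent produces a smaller power, so the diagonal bounds are dominated. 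Combining the two cases and fixing $c_1$ as the maximum of the constants that arise completes the argument.

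The step I expect to be the (mild) obstacle is exactly this exponent bookkeeping: the argument hinges on the specific value $\Delta = \frac{1}{100}$, which is what forces the diagonal $t$-exponents below their off-diagonal counterparts. No new analytic input is needed — all the hard estimates have already been established in Proposition \ref{equal}, Corollary \ref{rs-2} and Corollary \ref{rs1} — so I would simply keep careful track of the numerology, making sure the comparison at $\sigma = \frac{1}{2} + \frac{\Delta}{2}$ in particular does not degrade.
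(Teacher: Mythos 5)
Your proposal is correct and follows exactly the route the paper intends: the Proposition is stated there as an immediate combination of Corollary \ref{rs1} (off-diagonal case), Proposition \ref{equal} (diagonal case at $\sigma=\tfrac12+\tfrac{\Delta}{2}$) and Corollary \ref{rs-2} (diagonal case at $\sigma=\tfrac34$), with the $\log\log(N+2)$ factor absorbed into the exponential and the diagonal $(3+|t|)$-exponents dominated by the claimed ones. Your numerical checks ($\tfrac{3}{4}(1+\Delta)+\tfrac{33}{200}<1+\Delta$ and $\tfrac{11}{24}+\tfrac{3\Delta}{2}<\tfrac12+2\Delta$ for $\Delta=\tfrac{1}{100}$) and the reduction from level $N_1\mid N$ to level $N$ are exactly the bookkeeping the paper leaves implicit.
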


Further, we have the following Proposition.
\begin{prop}\label{rs-basis}
For square-free integer $N$ and $d, d' | N$, let
$f \in S_k^{\text{new}}(d), ~g \in S_k^{\text{new}}(d')$
be normalized Hecke eigen forms.  
For $\epsilon_{f, p}, \epsilon_{g,p} \in \{ \pm 1\}$, define
$$
F:= f ~|~ \prod_{ p | \frac{N}{d}} ( 1 + \epsilon_{f,p} p^{k/2} V_p)
\phantom{m}\text{and}\phantom{m}
G:= g ~|~ \prod_{ p | \frac{N}{d'}} ( 1 + \epsilon_{g,p} p^{k/2} V_p)
$$
as in Lemma \ref{sp-basis}. Also let  $\epsilon > 0$. 
Then for $\frac{1}{2}  \le \sigma < 1 + \Delta$ with
$\Delta = \frac{1}{100}$ and $t \in \R$ with 
$|t| \gg 1$, we have
\begin{eqnarray*}
\left|  
R_{F, G} \left(\sigma + it \right) 
\right|
~~\ll_{\epsilon}~~
(kN)^{1 + \Delta - \sigma} 
~.~N^{\frac{\Delta(1 + \Delta - \sigma)}{1+ 2\Delta} + \frac{1}{2}} 
~.~e^{(c_2 ~\frac{\log(N+1)}{\log\log(N+2)})}
~. ~ (3 + |t| )^{ 2(1+ \Delta -\sigma) +  \epsilon},
\end{eqnarray*}
where $c_2 > 0$ is an absolute constant. Further,
we have for all $t \in \R$
\begin{eqnarray*}
\left|  
R_{F, G} \left(\frac{1}{2} (1 + \Delta) + it \right) 
\right|
&\ll_{\epsilon}&
(kN)^{\frac{1}{2}(1 + \Delta)} 
~.~N^{\frac{\Delta(1 + \Delta)}{2(1+ 2\Delta)} + \frac{1}{2}}
~.~e^{(c_2 ~\frac{\log(N+1)}{\log\log(N+2)})}
~. ~ (3 + |t| )^{1+ \Delta  +  \epsilon}, \\
\text{and}\phantom{m}
\left|  
R_{F, G} \left(\frac{3}{4} + it \right) 
\right|
&\ll_{\epsilon}&
(kN)^{\frac{1}{4} + \Delta} 
~.~N^{\frac{\Delta(1 + 4\Delta)}{4(1+ 2\Delta)} + \frac{1}{2}}
~.~e^{(c_2 ~\frac{\log(N+1)}{\log\log(N+2)})}
~. ~ (3 + |t| )^{\frac{1}{2} + 2\Delta  +  \epsilon},
\end{eqnarray*}
where the constant $c_2>0$ is absolute.
\end{prop}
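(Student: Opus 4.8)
The plan is to reduce the bound for $R_{F,G}$ to the bounds for $R_{f,g}$ already established in Propositions \ref{any} and \ref{half}, by expanding $F$ and $G$ through Lemma \ref{sp-basis} and evaluating the resulting Rankin--Selberg series of twists via Lemma \ref{ac-RS}. Since $N$ is square-free and $V_pV_q = V_{pq}$ on distinct primes, I would first write
$$
F = \sum_{\delta \mid \frac{N}{d}} c_\delta \,(f \mid V_\delta),
\qquad
G = \sum_{\delta' \mid \frac{N}{d'}} c'_{\delta'}\,(g \mid V_{\delta'}),
\qquad
c_\delta := \prod_{p\mid\delta}\epsilon_{f,p}\,p^{k/2}, \quad
c'_{\delta'} := \prod_{p\mid\delta'}\epsilon_{g,p}\,p^{k/2},
$$
so that $|c_\delta| = \delta^{k/2}$ and $|c'_{\delta'}| = (\delta')^{k/2}$. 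Because the Rankin--Selberg Dirichlet series is bilinear in its two arguments and the coefficients $c_\delta, c'_{\delta'}$ are real, this gives the finite expansion
$$
R_{F,G}(s) = \sum_{\delta \mid \frac{N}{d}}\ \sum_{\delta'\mid\frac{N}{d'}} c_\delta\, c'_{\delta'}\, R_{f\mid V_\delta,\,g\mid V_{\delta'}}(s).
$$

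Next I would substitute the explicit formula of Lemma \ref{ac-RS} into each summand and bound the three resulting ingredients. The heart of the argument is the observation that the $p^{k/2}$ normalisation built into $F$ and $G$ exactly cancels the large $k$-power in the prefactor $(\delta\delta'/T)^{-s-k+1}$ of Lemma \ref{ac-RS}. Writing $T := \gcd(\delta,\delta')$, $\delta = Ta$, $\delta' = Tb$ with $\gcd(a,b)=1$, and taking moduli at $s = \sigma + it$,
$$
|c_\delta\, c'_{\delta'}|\cdot\Bigl|\bigl(\tfrac{\delta\delta'}{T}\bigr)^{-s-k+1}\Bigr|
= (\delta\delta')^{k/2}\,(\delta\delta'/T)^{-\sigma-k+1}
= T^{1-\sigma}\,(ab)^{\,1-\sigma-k/2}
\ \le\ N^{1/2},
$$
uniformly for $\tfrac12 \le \sigma < 1+\Delta$: indeed $1-\sigma-\tfrac{k}{2}\le 0$ forces $(ab)^{1-\sigma-k/2}\le 1$, while $T^{1-\sigma}\le\max(1,N^{1-\sigma})\le N^{1/2}$. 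This per-term estimate is precisely what produces the extra factor $N^{1/2}$ in the conclusion. For the local factors, Deligne's bound $|\lambda_f(p)|,|\lambda_g(p)|\le 2$ gives, for $\sigma\ge\tfrac12$,
$$
\left|\frac{\lambda_f(p)-\lambda_g(p)p^{-s}}{1-p^{-2s}}\right|
\ \le\ \frac{2+2p^{-\sigma}}{1-p^{-2\sigma}}\ \le\ \frac{4}{1-2^{-1}} = 8,
$$
and there are at most $\omega(N)$ such factors in each of the two products, so their contribution is $\le 8^{\omega(N)}$.

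Collecting these, and noting that the number of summands is at most $\tau(N/d)\,\tau(N/d')\le\tau(N)^2$, I obtain
$$
\bigl|R_{F,G}(\sigma+it)\bigr|
\ \le\ \tau(N)^2\cdot 8^{\omega(N)}\cdot N^{1/2}\cdot\bigl|R_{f,g}(\sigma+it)\bigr|.
$$
Since $\tau(N)^2 = 4^{\omega(N)}$ and $8^{\omega(N)}$ are both $e^{O(\omega(N))} = e^{O(\log N/\log\log N)}$, they are absorbed into the factor $e^{c_2\frac{\log(N+1)}{\log\log(N+2)}}$ (using $\omega(N)\ll\frac{\log N}{\log\log N}$), which also dominates the $e^{c_1\sqrt{\log(N+1)/\log\log(N+2)}}$ coming from the $R_{f,g}$ estimates. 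For the open strip I would then insert Proposition \ref{any} (valid for $|t|\gg 1$), whose $N^\Delta$-part supplies $N^{\Delta(1+\Delta-\sigma)/(1+2\Delta)}$ and whose $\tau(N)$-part is again exponentially absorbed; the residual $N^{1/2}$ gives the $+\tfrac12$ in the exponent of $N$, yielding the first displayed bound. For the two distinguished points $\sigma=\tfrac12(1+\Delta)$ and $\sigma=\tfrac34$ I would instead feed in Proposition \ref{half}, whose estimates hold for all $t\in\R$, and specialise $\sigma$, checking that $1+\Delta-\sigma$ equals $\tfrac{1+\Delta}{2}$ and $\tfrac14+\Delta$ respectively to match the stated exponents.

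I expect the main obstacle to be the arithmetic bookkeeping of the second paragraph, namely verifying that the weight-dependent prefactors and the $V_\delta$-normalisations conspire to a clean $k$-independent, $N^{1/2}$-bounded contribution per term; once that cancellation is pinned down, the remaining steps (Deligne bounds for the local factors, the crude counting that feeds the $e^{c_2\log(N+1)/\log\log(N+2)}$ factor, and the substitution of Propositions \ref{any} and \ref{half}) are routine.
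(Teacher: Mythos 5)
Your proposal is correct and follows essentially the same route as the paper: expand $F,G$ via Lemma \ref{sp-basis}, apply Lemma \ref{ac-RS} termwise, observe that the $p^{k/2}$ normalisation cancels the $k$-power in $(\delta\delta'/T)^{-s-k+1}$ leaving a contribution bounded by $\sqrt{N}\,\tau(N)^2$ up to factors of size $e^{O(\omega(N))}$, and then insert Propositions \ref{any} and \ref{half}. The only differences are cosmetic bookkeeping (you bound each term's $T$-power by $N^{1/2}$ and multiply by the term count, where the paper sums $\sqrt{\gcd(\delta,\delta')}$ directly to get the same $\sqrt{N}\,\tau^2(N)$).
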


\begin{proof}
Note that 
$$
R_{F, G} \left(\sigma + it \right)  
=
\sum_{\delta |\frac{N}{d}, ~ \delta' | \frac{N}{d'}}
 \epsilon_{f, \delta} \epsilon_{g, \delta'} ~(\delta \delta')^{k/2}
 ~R_{f|V_{\delta}, g|V_{\delta'}}(\sigma + it),
$$
where
$$
\epsilon_{f, \delta} = \prod_{p | \delta} \epsilon_{f, p}
\phantom{mm}\text{and}\phantom{mm}
\epsilon_{g, \delta'} = \prod_{p | \delta'} \epsilon_{g, p}.
$$
We would now like to find bounds for 
$R_{f|V_{\delta}, g|V_{\delta'}}(\sigma + it)$
when $1/2 \le \sigma < 1 + \Delta$ 
and any $t \in \R$ with $|t| \gg 1$.
For $T := \text{ gcd }(\delta, \delta')$, we have
\begin{eqnarray*}
\prod_{p \text{ prime } \atop p | \frac{\delta'}{T}} |\lambda_f (p) - 
\lambda_g (p) p^{-\sigma -it}|
&\ll&
 4^{\nu( \frac{\delta'}{T})}  \\
\phantom{m} \text{and}\phantom{mmm}
 \prod_{p \text{ prime} \atop p | \frac{\delta}{T}} |\lambda_g (p) - 
\lambda_f (p) p^{-\sigma -it}|
&\ll&
 4^{\nu( \frac{\delta}{T})}~,
\end{eqnarray*}
where $\nu(N)$ is the number of distinct prime 
factors of $N$.  Also
\begin{eqnarray*}
\left| \prod_{p \text{ prime} \atop p | N} (1 - p^{-2\sigma -2it})^{-1} \right|
~\le~ 
\prod_{p \text{ prime} \atop p | N} (1 - p^{-2\sigma})^{-1} 
&\le&
\prod_{p \text{ prime} \atop p | N} (1 - p^{-1})^{-1} \\
&\ll& 
 \prod_{p \text{ prime} \atop p | N} (1 + p^{-1})
~~\ll~~
\log\log(N+2).
\end{eqnarray*}
Hence by applying Lemma \ref{ac-RS}, we get
$$
\left| R_{f|V_{\delta}, g|V_{\delta'}}(\sigma + it) \right|
~~\ll~~
\left(\frac{\delta\delta'}{T}\right)^{-k + \frac{1}{2}}~
.~ 4^{\nu(N)}~.~\log\log(N+2)~.~ |R_{f,g} (\sigma + it)| 
$$
for all $\frac{1}{2} \le \sigma < 1 + \Delta$ 
with $\Delta = \frac{1}{100}$ 
and any $t \in \R$ with $|t| \gg 1$.
Thus for $\sigma$ and $t$ in the above mentioned range
and for any $\epsilon >0$, we get
\begin{eqnarray*}
\left|  
R_{F, G} \left(\sigma + it \right) 
\right|
&\ll_{\epsilon} &
4^{\nu(N)}~.~(kN)^{1 + \Delta - \sigma} 
~.~(\tau(N).~N^{\Delta})^{\frac{1 + \Delta - \sigma}{1+ 2\Delta}}
~.~\log\log(N+2)~.~
~~e^{(c_1 ~\sqrt{\frac{\log(N+1)}{\log\log(N+2)})}}~. \\
&&
\phantom{mmm}
( \sum_{ \delta | \frac{N}{d}, \delta' | \frac{N}{d'}} 
\left(\frac{\delta\delta'}{T^2}\right)^{-\frac{k}{2} 
+  \frac{1}{2}} \sqrt{T} ~)~~.~~
~ (3 + |t| )^{ 2 (1+ \Delta -\sigma) +  \epsilon} \\
&\ll_{\epsilon} &
4^{\nu(N)}~.~(kN)^{1 + \Delta - \sigma} 
~.~(\tau(N).~N^{\Delta})^{\frac{1 + \Delta - \sigma}{1+ 2\Delta}}
~.~\log\log(N+2)~. ~
~e^{(c_1 ~\sqrt{\frac{\log(N+1)}{\log\log(N+2)})}}~.\\
&&
\phantom{mmm}
 ~( \sum_{ \delta | \frac{N}{d}, \delta' | \frac{N}{d'}} 
\sqrt{T} ~)~. ~~
~ (3 + |t| )^{ 2(1+ \Delta -\sigma) +  \epsilon},
\end{eqnarray*}
by noting that 
$$
\left(\frac{\delta\delta'}{T^2}\right)^{-\frac{k}{2} +  \frac{1}{2}} 
~\le~ 1.
$$
Finally using the inequality (see page 533 of \cite{CK})
$$
(\sum_{ \delta | \frac{N}{d}, \delta' | \frac{N}{d'}} 
\sqrt{\text{ gcd }(\delta, \delta')})
~~\le~~ 
\sqrt{N}.~ \tau^2(N),
$$
where $\tau(N)$ is the number of divisors
of $N$ and 
\begin{equation*}
4^{\nu(N)}.~\log\log(N+2).
~ e^{(c_1 \sqrt{\frac{\log(N+1)}{\log\log(N+2)}} )}.
~\tau(N)^{\frac{1 + \Delta - \sigma}{1+ 2\Delta}}
.~(\sum_{ \delta | \frac{N}{d}, 
\delta' | \frac{N}{d'}} \sqrt{\text{gcd} (\delta, \delta')} )
~~~\ll~~
 \sqrt{N}.~ e^{(c_2 \frac{\log(N+1)}{\log\log(N+2)})}
\end{equation*}
where $c_2>0$ is an absolute constant, we get
for any $\epsilon>0, ~1/2  \le \sigma < 1 + \Delta$ 
and $t\in \R$ with $|t| \gg 1$ that
$$
\left|  
R_{F, G} \left(\sigma + it \right) 
\right|
~~\ll_{\epsilon}~~
(kN)^{1 + \Delta - \sigma} 
~.~N^{\frac{\Delta(1 + \Delta - \sigma)}{1+ 2\Delta}}.~\sqrt{N} 
~.~e^{(c_2 ~\frac{\log(N+1)}{\log\log(N+2)})}
~. ~ (3 + |t| )^{ 2(1+ \Delta -\sigma) +  \epsilon}.
$$
This completes the proof of the first part of the theorem. 
Proceeding as in the first part and applying Proposition
\ref{half}, we get for any $t\in \R$ and $\epsilon>0$
that
\begin{eqnarray*}
\left|  
R_{F, G} \left(\frac{1}{2} (1 + \Delta) + it \right) 
\right|
&\ll_{\epsilon}&
(kN)^{\frac{1}{2}(1 + \Delta)} 
~.~N^{\frac{\Delta(1 + \Delta)}{2(1+ 2\Delta)}}
.~\sqrt{N} ~.~e^{(c_2 ~\frac{\log(N+1)}{\log\log(N+2)})}
~. ~ (3 + |t| )^{1+ \Delta  +  \epsilon}, \\
\text{and}\phantom{m}
\left|  
R_{F, G} \left(\frac{3}{4} + it \right) 
\right|
&\ll_{\epsilon}&
(kN)^{\frac{1}{4} + \Delta} 
~.~N^{\frac{\Delta(1 + 4\Delta )}{4(1+ 2\Delta)}}.~\sqrt{N} 
~.~e^{(c_2 ~\frac{\log(N+1)}{\log\log(N+2)})}
~. ~ (3 + |t| )^{\frac{1}{2} + 2\Delta  +  \epsilon},
\end{eqnarray*}
where the constant $c_2 >0$ is absolute.
This completes the proof of the Proposition.
\end{proof}

We also need the following lemma of Choie and Kohnen.

\begin{lem}{\rm[Choie-Kohnen (see page 534 of \cite{CK})]}\label{ld}
Let $N$ be square-free and $d|N$. Also let $f \in~S_k^{\text{ new}}(d)$
be a normalized Hecke eigen form and $F$ be as in Lemma \ref{sp-basis}.
Then
$$
1 \ll \frac{(4 \pi)^{k-1}}{(k-1)!}~. \log(kN) ~. \prod_{ p|N}(1 + \frac{1}{p})
~.e^{( \tilde{c} \sqrt{\frac{\log(N+1)}{\log\log(N+2)}})}~. <F, F>,
$$
where $\tilde{c}  > 0$ is an absolute constant.
\end{lem}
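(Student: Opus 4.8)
The plan is to read the asserted inequality as an effective lower bound for the Petersson norm $\langle F,F\rangle$ and to extract it from the residue of $R_{F,F}(s)$ at the edge $s=1$. Since the residue formula \eqref{res} is valid for any pair in $S_k(N)$, taking $f=g=F$ gives
$$
\langle F,F\rangle = \frac{(k-1)!}{12\,(4\pi)^{k-1}}\,\mathrm{Res}_{s=1}R_{F,F}(s),
$$
so, after clearing the factor $(k-1)!/(4\pi)^{k-1}$, the statement is equivalent to
$$
\mathrm{Res}_{s=1}R_{F,F}(s) \;\gg\; \Big(\log(kN)\prod_{p\mid N}(1+1/p)\,e^{\tilde c\sqrt{\log(N+1)/\log\log(N+2)}}\Big)^{-1}.
$$
Everything therefore reduces to a lower bound for this residue, and the advantage of the formulation is that the residue is visibly positive, so only its \emph{size} is in question.

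First I would express $R_{F,F}$ through the underlying newform $f\in S_k^{\mathrm{new}}(d)$. Writing $F=f\,|\,\prod_{p\mid N/d}(1+\epsilon_{f,p}p^{k/2}V_p)$ as in Lemma \ref{sp-basis} and expanding the slash product exactly as in the proof of Proposition \ref{rs-basis} gives
$$
R_{F,F}(s)=\sum_{\delta,\delta'\mid N/d}\epsilon_{f,\delta}\,\epsilon_{f,\delta'}\,(\delta\delta')^{k/2}\,R_{f|V_\delta,\,f|V_{\delta'}}(s),
$$
and Lemma \ref{ac-RS} rewrites each summand as $R_{f,f}(s)$ times a finite Euler factor supported on the primes dividing $\delta\delta'/\gcd(\delta,\delta')$. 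Collecting residues, I obtain $\mathrm{Res}_{s=1}R_{F,F}(s)=\Lambda\cdot\mathrm{Res}_{s=1}R_{f,f}(s)$, where $\Lambda$ is the value at $s=1$ of the finite-sum factor; equivalently $\langle F,F\rangle=\Lambda\,\langle f,f\rangle$, and $\Lambda=\prod_{p\mid N/d}c_p$ factors into local inner-product contributions $c_p$ that are positive by the orthogonality in Lemma \ref{sp-basis}.

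It remains to bound the two pieces from below. For the arithmetic factor I would use the symmetric-square factorisation \eqref{sym}, which gives
$$
\mathrm{Res}_{s=1}R_{f,f}(s)=\frac{6}{\pi^2}\,\prod_{p\mid d}(1+p^{-1})^{-1}\,L(\mathrm{sym}^2 f,1),
$$
and then the effective lower bound $L(\mathrm{sym}^2 f,1)\gg 1/\log(kd)\ge 1/\log(kN)$ (Hoffstein--Lockhart, with the appendix of Goldfeld--Hoffstein--Lieman excluding a real zero of the symmetric square near $s=1$). Since $d\mid N$, one has $\prod_{p\mid d}(1+p^{-1})^{-1}\ge\prod_{p\mid N}(1+1/p)^{-1}$, which is exactly the reciprocal of the factor $\prod_{p\mid N}(1+1/p)$ in the statement. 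For the local factors, the diagonal terms $\delta=\delta'$ supply a positive main term, while the off-diagonal ones carry $\lambda_f(p)p^{-1/2}$ (with $|\lambda_f(p)|\le 2$ by Deligne) together with the signs $\epsilon_{f,p}$, and are controlled by products of the form $\prod_{p\mid N}(1+p^{-1/2})$ and $4^{\nu(N)}$. By the very estimates already used in Corollary \ref{rs-2} and in the proof of Proposition \ref{rs-basis} (see page 180 of \cite{KS}) these are $\ll e^{\tilde c\sqrt{\log(N+1)/\log\log(N+2)}}$, so $\Lambda\gg e^{-\tilde c\sqrt{\log(N+1)/\log\log(N+2)}}$, which produces the last factor in the statement.

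The main obstacle is the arithmetic input $L(\mathrm{sym}^2 f,1)\gg 1/\log(kN)$: this is the one genuinely deep ingredient, amounting to ruling out a Siegel-type zero of the symmetric-square $L$-function, and it is precisely this bound that produces the $\log(kN)$ factor. The remaining work is bookkeeping: because the off-diagonal $V_\delta$-contributions have uncontrolled signs, one must verify that they cannot cancel the positive diagonal term down past the claimed exponential factor. This is handled purely quantitatively by the Euler-product estimates above, positivity of $\langle F,F\rangle$ itself being guaranteed for free by the orthogonal decomposition of Lemma \ref{sp-basis}.
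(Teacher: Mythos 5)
The paper does not actually prove this lemma: it is imported verbatim from Choie--Kohnen \cite{CK} (their lemma on p.~534), so there is no in-text argument to compare against. Your reconstruction --- convert the claim into a lower bound for $\mathrm{Res}_{s=1}R_{F,F}(s)$ via \eqref{res}, reduce to $R_{f,f}$ through Lemma \ref{ac-RS}, factor the residue as $\tfrac{6}{\pi^2}\prod_{p\mid d}(1+p^{-1})^{-1}L(\mathrm{sym}^2f,1)$ via \eqref{sym}, and invoke Hoffstein--Lockhart together with the Goldfeld--Hoffstein--Lieman appendix for $L(\mathrm{sym}^2f,1)\gg 1/\log(kN)$ --- is the standard route and, as far as I can tell, the one \cite{CK} themselves follow. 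You have correctly identified the symmetric-square nonvanishing as the only deep input, and correctly matched each factor in the statement to its source ($\log(kN)$ from the $L$-value, $\prod_{p\mid N}(1+1/p)$ from the Euler factor and $d\mid N$).

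The one step you must tighten is the lower bound $\Lambda\gg e^{-\tilde c\sqrt{\log(N+1)/\log\log(N+2)}}$. As written, you deduce it from \emph{upper} bounds of size $4^{\nu(N)}$ and $\prod_{p\mid N}(1+p^{-1/2})$ on the off-diagonal contribution; but an upper bound on a sum of uncontrolled sign that may well exceed the diagonal term $2^{\nu(N/d)}$ yields no lower bound on $\Lambda$ --- the inference ``off-diagonal $\ll e^{+\tilde c\sqrt{\cdots}}$, so $\Lambda\gg e^{-\tilde c\sqrt{\cdots}}$'' is a non sequitur. The repair is already implicit in the local factorization you set up: since $N/d$ is squarefree, the double sum over $\delta,\delta'\mid N/d$ splits as $\Lambda=\prod_{p\mid N/d}c_p$ with $c_p=2+2\epsilon_{f,p}\lambda_f(p)\,p^{-k/2}(1+p^{-1})^{-1}$ (note the off-diagonal decay is $p^{-k/2}$ coming from $(\delta\delta')^{k/2}(\delta\delta'/T)^{-s-k+1}$ at $s=1$, not the $p^{-1/2}$ you quote), and Deligne's bound $|\lambda_f(p)|\le 2$ gives $c_p\ge 2-4/(p+1)\ge 2/3>0$ for every prime $p$ and every even $k\ge 2$, indeed $c_p\ge 1$ for $p\ge 3$. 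Hence $\Lambda\gg 1$, which is stronger than the exponential loss the lemma allows for. With that prime-by-prime estimate substituted for the global one, your proof is complete.
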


Finally, we have the following Propositions which
are important to complete the proof of Theorem
\ref{thmthree}.

\begin{prop}\label{firstle}
For a square-free $N$, let $f$ be a non-zero cusp form of weight 
$k$ for $\Gamma_0(N)$ with real Fourier coefficients. Let
$$
D(k, N) := \frac{2 \pi^2 (4\pi)^{k-1}}{(k-1)!} 
\prod_{ p| N \atop p \text{ prime }} ( 1 + 1/p)
$$
and
\begin{equation}\label{norm}
\tilde{f} := \frac{f}{ \sqrt{D(k,N)} || f ||},
\end{equation}
where $|| . ||$ is the Petersson norm.
If for $n \in \N$, $\lambda_{\tilde{f}}(n)$ are normalized Fourier
coefficients of $\tilde{f}$, then for any 
$0< \epsilon < \frac{12}{25}$ and $\Delta = \frac{1}{100}$, we have
\begin{eqnarray}\label{eq-r2}
&&
\sum_{n \leq x} \lambda_{\tilde{f}}^2(n) \log\left(\frac{x}{n}\right) \\
&=&
\frac{6}{\pi^2} \prod_{ p \text{ prime} \atop p |N} (1 + \frac{1}{p})^{-1} ~x 
~+~  O_{\epsilon} \left( (kN)^{\frac{5}{4} + \Delta}
~.~N^{\frac{\Delta(1 + 4\Delta )}{4(1+ 2\Delta)}}.~\sqrt{N} 
~.~e^{(c_3 ~\frac{\log(N+1)}{\log\log(N+2)})}
~. ~\log k~.~x^{3/4} \right), \nonumber
\end{eqnarray}
where $c_3 > 0$ is an absolute constant. 
\end{prop}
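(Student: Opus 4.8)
The plan is to recognize the left-hand side as a smoothed (Riesz mean) partial sum of the Rankin--Selberg coefficients $\lambda_{\tilde f}^2(n)$ and to evaluate it by contour integration. Using the Mellin kernel identity
$$
\frac{1}{2\pi i}\int_{(c)}\frac{y^s}{s^2}\,ds = \begin{cases}\log y & y\ge 1,\\ 0 & 0<y\le 1,\end{cases}
$$
valid for $c>0$, together with the absolute convergence of $R_{\tilde f,\tilde f}(s)=\sum_n \lambda_{\tilde f}^2(n)n^{-s}$ for $\Re(s)>1$, I would first record that for any $c>1$
$$
\sum_{n\le x}\lambda_{\tilde f}^2(n)\log\Big(\frac{x}{n}\Big) = \frac{1}{2\pi i}\int_{(c)} R_{\tilde f,\tilde f}(s)\,\frac{x^s}{s^2}\,ds.
$$
The main term is pinned down by the residue formula \eqref{res}: since $\tilde f$ is normalized so that $\langle \tilde f,\tilde f\rangle = 1/D(k,N)$, the function $R_{\tilde f,\tilde f}(s)$ has a simple pole at $s=1$ with
$$
\text{Res}_{s=1}R_{\tilde f,\tilde f}(s) = \frac{12\,(4\pi)^{k-1}}{(k-1)!}\,\frac{1}{D(k,N)} = \frac{6}{\pi^2}\prod_{p|N}(1+1/p)^{-1},
$$
so the residue of $R_{\tilde f,\tilde f}(s)x^s/s^2$ at $s=1$ is exactly $\frac{6}{\pi^2}\prod_{p|N}(1+1/p)^{-1}\,x$. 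This is precisely the stated main term; the normalization $D(k,N)$ was manufactured for this to come out clean.

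Next I would shift the line of integration from $\Re(s)=c$ to $\Re(s)=3/4$, crossing only the simple pole at $s=1$ (the kernel's double pole at $s=0$ is not reached). Because $R_{\tilde f,\tilde f}(s)$ is of finite order and of polynomial growth in $t$ throughout the strip by the convexity bounds established earlier, the horizontal segments vanish and the shift is legitimate. This reduces everything to
$$
\frac{1}{2\pi i}\int_{(3/4)}R_{\tilde f,\tilde f}(s)\,\frac{x^s}{s^2}\,ds \ll x^{3/4}\int_{-\infty}^{\infty}\frac{\big|R_{\tilde f,\tilde f}(3/4+it)\big|}{\tfrac{9}{16}+t^2}\,dt,
$$
so the whole problem comes down to a good bound for $|R_{\tilde f,\tilde f}(3/4+it)|$, explicit in $k$, $N$ and $t$.

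The heart of the argument, and the step I expect to be the main obstacle, is bounding $R_{\tilde f,\tilde f}$ for a \emph{general} cusp form $\tilde f$ rather than a Hecke eigenform. I would decompose $\tilde f=\sum_j c_j F_j$ in the orthogonal basis $\{F_j\}$ of Lemma \ref{sp-basis}, so that by sesquilinearity $R_{\tilde f,\tilde f}(s)=\sum_{i,j}c_i\overline{c_j}\,R_{F_i,F_j}(s)$, and bound each $R_{F_i,F_j}(3/4+it)$ by the uniform estimate of Proposition \ref{rs-basis}. The coefficients are controlled as follows: orthogonality gives $\sum_j|c_j|^2\|F_j\|^2=\|\tilde f\|^2=1/D(k,N)$, hence $\sum_j|c_j|^2\le \|\tilde f\|^2/\min_j\|F_j\|^2$, and the Petersson-norm lower bound of Lemma \ref{ld} makes this $\ll \log(kN)\,e^{\tilde c\sqrt{\log(N+1)/\log\log(N+2)}}$. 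Since the number of basis elements equals $\dim S_k(N)\ll k[\Gamma_1:\Gamma_0(N)] = kN\prod_{p|N}(1+1/p)\ll kN\log\log(N+2)$, Cauchy--Schwarz gives
$$
\Big(\sum_j|c_j|\Big)^2 \le \big(\dim S_k(N)\big)\sum_j|c_j|^2 \ll kN\,\log(kN)\,\log\log(N+2)\,e^{\tilde c\sqrt{\log(N+1)/\log\log(N+2)}}.
$$
Multiplying this by $\max_{i,j}|R_{F_i,F_j}(3/4+it)|$ from Proposition \ref{rs-basis} and absorbing $\log N$, $\log\log(N+2)$ and the square-root exponential into a single factor $e^{c_3\log(N+1)/\log\log(N+2)}$ yields
$$
\big|R_{\tilde f,\tilde f}(3/4+it)\big| \ll_\epsilon (kN)^{5/4+\Delta}\,N^{\frac{\Delta(1+4\Delta)}{4(1+2\Delta)}}\sqrt{N}\,e^{c_3\frac{\log(N+1)}{\log\log(N+2)}}\,\log k\,(3+|t|)^{1/2+2\Delta+\epsilon}.
$$
The delicate point is bookkeeping the powers so that $k^1\cdot k^{1/4+\Delta}=k^{5/4+\Delta}$ and the $N$-exponents combine to $5/4+\Delta+\frac{\Delta(1+4\Delta)}{4(1+2\Delta)}+\tfrac12$; this forces the use of the sharp norm bound of Lemma \ref{ld} and the dimension estimate rather than cruder inequalities.

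Finally I would insert this bound into the contour integral. The remaining $t$-integral $\int_{-\infty}^{\infty}(3+|t|)^{1/2+2\Delta+\epsilon}\big(\tfrac{9}{16}+t^2\big)^{-1}\,dt$ converges precisely when $1/2+2\Delta+\epsilon<1$, i.e. when $\epsilon<1/2-2\Delta=12/25$ for $\Delta=1/100$, which is exactly the hypothesis on $\epsilon$; in that range it is $O_\epsilon(1)$. Combining the main term from the residue at $s=1$ with this error term of size $O_\epsilon\big((kN)^{5/4+\Delta}N^{\frac{\Delta(1+4\Delta)}{4(1+2\Delta)}}\sqrt{N}\,e^{c_3\log(N+1)/\log\log(N+2)}\log k\cdot x^{3/4}\big)$ completes the proof.
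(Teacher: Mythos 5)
Your proposal is correct and follows essentially the same route as the paper: Perron's formula with kernel $x^s/s^2$, the residue at $s=1$ computed via \eqref{res} and the normalization \eqref{norm}, a shift to $\Re(s)=3/4$, the decomposition of $\tilde f$ in the special basis of Lemma \ref{sp-basis} bounded by Proposition \ref{rs-basis}, the norm lower bound of Lemma \ref{ld} combined with Cauchy--Schwarz and the dimension estimate $d_{k,N}\ll kN\log\log(N+2)$, and the convergence of the $t$-integral exactly when $\epsilon<12/25$. The only cosmetic difference is that you bound $\sum_j|c_j|^2$ by $\|\tilde f\|^2/\min_j\|F_j\|^2$ where the paper applies Lemma \ref{ld} termwise inside the orthogonality identity, which is the same estimate.
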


\begin{proof}
Note that if $a_{\tilde{f}}(n)$ are the Fourier coefficients
of $\tilde{f}$, then $\lambda_{\tilde{f}}(n) :=  
a_{\tilde{f}}(n)/n^{\frac{k-1}{2}}$.
We use Perron's formula to write
$$
\sum_{n \le x} \lambda_{\tilde{f}}^2(n) \log\left(\frac{x}{n}\right)
~=~
\frac{1}{2 \pi i} \int_{1 + \frac{\Delta}{2}
 - i\infty}^{1+ \frac{\Delta}{2} + i \infty} 
R_{\tilde{f}, \tilde{f}}(s) ~\frac{x^s}{s^2} ~ds,
$$
where $R_{\tilde{f}, \tilde{f}}(s)$ is the Rankin-Selberg $L$-function
attached to $\tilde{f}$. Then shifting the line of integration and 
using equations \eqref{res} and \eqref{norm}, we get
$$
\sum_{n \leq x} \lambda_{\tilde{f}}^2(n) \log\left(\frac{x}{n}\right)
~=~
\frac{6}{\pi^2} \prod_{ p |N \atop \text{ prime }} (1 + 1/p)^{-1} ~x
~+~ 
\frac{1}{2\pi i} \int_{\frac{3}{4}  
- i\infty}^{\frac{3}{4} + i \infty}
R_{\tilde{f}, \tilde{f}}(s) ~\frac{x^s}{s^2} ~ds.
$$
We write
$$
\tilde{f} := \sum_{\tau=1}^{d_{k,N}} \alpha_{\tau} F_{\tau},
$$
where $d_{k,N} := \text{ dim }S_k(N)$ and
$\{ F_{\tau} \}_{1\le \tau \le d_{k, N}}$ is the special
orthogonal basis of $S_k(N)$ (in some fixed order) 
mentioned in Lemma \ref{sp-basis}. Set
$$
A_{\tilde{f}} :=  \sum_{\tau =1}^{d_{k,N}} |\alpha_{\tau}|.
$$
Since 
$$
R_{\tilde{f}, \tilde{f}}(s) 
~=~ 
\sum_{\tau , \tau'} \alpha_{\tau} .\overline{\alpha}_{\tau'} 
~R_{F_{\tau}, F_{\tau'}}(s),
$$
using Proposition \ref{rs-basis}, we get
\begin{eqnarray*}
\left|  
R_{F, G} \left(\frac{3}{4} + it \right) 
\right|
&\ll_{\epsilon}&
(kN)^{\frac{1}{4} + \Delta}
~.~N^{\frac{\Delta(1 + 4\Delta )}{4(1+ 2\Delta)}}
.~\sqrt{N}
~.~e^{(c_2 ~\frac{\log(N+1)}{\log\log(N+2)})}
~. ~ (3 + |t| )^{\frac{1}{2} + 2\Delta  +  \epsilon}
\end{eqnarray*}
for all $t \in \R$.  We know by 
Chebyshef's inequality that
$$
A_{\tilde{f}}^2 ~~\le~~~ d_{k, N} .~
\sum_{ \tau =1}^{d_{k,N}} |\alpha_{\tau}|^2
$$
and hence for any $t\in \R$
\begin{eqnarray*}
\left| R_{\tilde{f}, \tilde{f}}( \frac{3}{4} + it ) \right|
&\ll_{\epsilon}&
d_{k, N} ~.~
(kN)^{\frac{1}{4} + \Delta} 
~.~N^{\frac{\Delta(1 + 4\Delta )}{4(1+ 2\Delta)}}.~\sqrt{N}
~.~e^{(c_2 ~\frac{\log(N+1)}{\log\log(N+2)})}
~. ~ (3 + |t| )^{\frac{1}{2} + 2\Delta  +  \epsilon}
~. ~\sum_{ \tau =1}^{d_{k,N}} |\alpha_{\tau}|^2.
\end{eqnarray*}
Note that
$$
< \tilde{f}, ~\tilde{f} > 
~~~~~~=~~~~ 
\sum_{\tau = 1}^{d_{k,N}} |\alpha_{\tau}|^2 
< F_{\tau}, ~ F_{\tau}>.
$$
since the basis $\{ F_{\tau} \}_{1 \le \tau \le d_{k, N}}$ 
is orthogonal.
Using this along with equation \eqref{norm} and
Lemma \ref{ld}, we can write for any $t\in \R$
\begin{eqnarray}\label{dar1}
\left| R_{\tilde{f}, \tilde{f}}( \frac{3}{4} + it ) \right|
&\ll_{\epsilon} &
d_{k, N} ~.~
(kN)^{\frac{1}{4} + \Delta}
~.~N^{\frac{\Delta(1 + 4\Delta )}{4(1+ 2\Delta)}}~.~\sqrt{N}  \\
&&
\phantom{mm}
~.~e^{(\tilde{c}_2 ~\frac{\log(N+1)}{\log\log(N+2)})}
~. ~ (3 + |t| )^{\frac{1}{2} + 2\Delta  +  \epsilon}
~.~\frac{(4 \pi)^{k-1}}{(k-1)!}  \nonumber\\
&&
\phantom{mmm}
.~ \log ~k.~ \prod_{ p \text{ prime} \atop p|N}(1 + \frac{1}{p})  
~.~ \sum_{ \tau =1}^{d_{k,N}} |\alpha_{\tau}|^2 
< F_{\tau}, ~ F_{\tau}>.  \nonumber\\
&\ll_{\epsilon}&
d_{k, N}  ~~.~~~
(kN)^{\frac{1}{4} + \Delta}
~.~N^{\frac{\Delta(1 + 4\Delta )}{4(1+ 2\Delta)}}.~\sqrt{N}
~.~e^{(\tilde{c}_2 ~\frac{\log(N+1)}{\log\log(N+2)})}
~. ~\log k~. \nonumber  \\
 && 
~~ (3 + |t| )^{\frac{1}{2} + 2 \Delta + \epsilon}~
.~\frac{(4 \pi)^{k-1}}{(k-1)!}.~ \prod_{p \text { prime} 
\atop  p|N}(1 + \frac{1}{p}).~
 < \tilde{f}, ~ \tilde{f}>. \nonumber \\
 &\ll_{\epsilon}&
 d_{k, N}  ~~.~~~
(kN)^{\frac{1}{4} + \Delta}
~.~N^{\frac{\Delta(1 + 4\Delta )}{4(1+ 2\Delta)}}
.~\sqrt{N} \nonumber \\
&&
\phantom{mmm}
~.~e^{(\tilde{c}_2 ~\frac{\log(N+1)}{\log\log(N+2)})}
~. ~\log k~.
~~ (3 + |t| )^{\frac{1}{2} + 2 \Delta + \epsilon}~, \nonumber
\end{eqnarray}
where the constants $\tilde{c}_2> 0$ is absolute. 
Since
\begin{eqnarray}\label{dar2}
d_{k,N} ~\ll~  k.~N.~ \log\log(N+2)
\end{eqnarray}
and choosing $\epsilon$ with $ 0 <  \epsilon < \frac{12}{25}$, we now have 
\begin{eqnarray*}
&&
\sum_{n \leq x} \lambda_{\tilde{f}}^2(n) \log\left(\frac{x}{n}\right) \\
&=&
\frac{6}{\pi^2} \prod_{ p \text{ prime} \atop p |N} (1 + \frac{1}{p})^{-1} ~x
~+~  O_{\epsilon} \left( (kN)^{\frac{5}{4} + \Delta}
~.~N^{\frac{\Delta(1 + 4\Delta )}{4(1+ 2\Delta)}}
.~\sqrt{N}
~.~e^{(c_3 ~\frac{\log(N+1)}{\log\log(N+2)})}
~. ~\log k~.~x^{3/4} \right),
\end{eqnarray*}
where $c_3 >0$ is an absolute constant.
This completes the proof of the Proposition.
\end{proof}

\begin{prop}\label{secle}
For a square-free $N$, let $f$ be a non-zero cusp form of weight 
$k$ for $\Gamma_0(N)$ with real Fourier coefficients. Also let
$\tilde{f}$ be as in Proposition \ref{firstle} and 
$\lambda_{\tilde{f}}(n)$ be the normalized $n$-th Fourier
coefficient of $\tilde{f}$. We have
\begin{eqnarray}\label{eq-r3}
&&
\sum_{n \leq x} \lambda_{\tilde{f}}^2(n) \\
&=& 
\frac{6}{\pi^2} \prod_{ p |N \atop \text{ prime }} (1 + \frac{1}{p})^{-1} ~x
~+~  
O_{\epsilon} \left( (kN)^{\frac{3}{2} + \frac{\Delta}{2}} 
~.~N^{\frac{\Delta(1 + \Delta)}{2(1+ 2\Delta)}}
~.~\sqrt{N}~.~ \log k
~.~e^{(c_4 ~\frac{\log(N+1)}{\log\log(N+2)})}~.
~~x^{\frac{3}{4} (1 + \Delta)+ \epsilon}
\right), \nonumber
\end{eqnarray}
where $0< \epsilon < 97/400$, $\Delta = \frac{1}{100}$ 
and $c_4>0$ is an absolute constant. 
\end{prop}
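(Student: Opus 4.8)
The plan is to prove \eqref{eq-r3} by applying Perron's formula \emph{directly} to the unweighted sum, rather than by differencing the smoothed estimate \eqref{eq-r2}; the latter route introduces an $x^{1/8}$ loss and would only yield an error of size $x^{7/8}$, which is far weaker than what is claimed. Writing $A(s) := R_{\tilde f,\tilde f}(s) = \sum_{n\ge 1}\lambda_{\tilde f}^2(n)\,n^{-s}$, absolutely convergent for $\Re(s)>1$, I would start from the truncated Perron formula
$$
\sum_{n\le x}\lambda_{\tilde f}^2(n) = \frac{1}{2\pi i}\int_{1+\frac{\Delta}{2}-iT}^{1+\frac{\Delta}{2}+iT} A(s)\,\frac{x^s}{s}\,ds \;+\; R(x,T),
$$
with a truncation height $T$ to be chosen later, and then shift the line of integration to $\Re(s)=\tfrac12(1+\Delta)$. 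The only singularity crossed is the simple pole of $A(s)$ at $s=1$; by \eqref{res} together with the normalisation \eqref{norm}, which gives $\langle\tilde f,\tilde f\rangle = 1/D(k,N)$, the residue of $A(s)x^s/s$ there is exactly $\frac{6}{\pi^2}\prod_{p\mid N}(1+1/p)^{-1}\,x$, the asserted main term.

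The next step is to bound the integral along the shifted line $\sigma_0:=\tfrac12(1+\Delta)$ and the two horizontal segments at height $\pm T$. For the line integral I would first reprove, exactly along the lines of \eqref{dar1} in Proposition \ref{firstle}, a bound for $|R_{\tilde f,\tilde f}(\sigma_0+it)|$: decompose $\tilde f=\sum_\tau \alpha_\tau F_\tau$ in the orthogonal basis of Lemma \ref{sp-basis}, expand $R_{\tilde f,\tilde f}$ bilinearly, insert the bound of Proposition \ref{rs-basis} (equivalently Proposition \ref{half}) at $\sigma_0$, and control the coefficients using Chebyshev's inequality, Lemma \ref{ld}, and $d_{k,N}\ll kN\log\log(N+2)$. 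This produces
$$
\bigl|R_{\tilde f,\tilde f}(\tfrac12(1+\Delta)+it)\bigr| \ll_\epsilon \mathcal{C}_2\,(3+|t|)^{1+\Delta+\epsilon}, \qquad \mathcal{C}_2 := (kN)^{\frac32+\frac{\Delta}{2}}\,N^{\frac{\Delta(1+\Delta)}{2(1+2\Delta)}}\sqrt{N}\,\log k\;e^{c_4\frac{\log(N+1)}{\log\log(N+2)}},
$$
which is precisely the coefficient appearing in \eqref{eq-r3}. Integrating $x^{\sigma_0}/|s|$ against this estimate over $|t|\le T$ contributes $\ll_\epsilon \mathcal{C}_2\,x^{\frac12(1+\Delta)}T^{1+\Delta+\epsilon}$, while the horizontal segments are smaller. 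Choosing $T\asymp x^{1/4}$ makes the line integral $\ll_\epsilon \mathcal{C}_2\,x^{\frac34(1+\Delta)+\epsilon}$, since $\tfrac12(1+\Delta)+\tfrac14(1+\Delta)=\tfrac34(1+\Delta)$, the slack being absorbed into $x^\epsilon$.

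The main obstacle is the truncation error $R(x,T)$, which is the genuinely new difficulty compared with Proposition \ref{firstle}: with the slower-decaying kernel $x^s/s$ (rather than $x^s/s^2$) the Perron error
$$
R(x,T)\ll \frac{x^{1+\Delta/2}}{T}\sum_{n\ge 1}\frac{\lambda_{\tilde f}^2(n)}{n^{1+\Delta/2}\,|\log(x/n)|}
$$
is dominated by the terms with $n$ close to $x$, where $|\log(x/n)|$ is small. I would take $x$ to be a half-integer, so that $|\log(x/n)|\gg |x-n|/x$ for $n$ near $x$, reducing the problem to bounding $\frac{x}{T}\sum_{|n-x|\ll x}\lambda_{\tilde f}^2(n)/|x-n|$. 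To control this with the correct explicit dependence on $k$ and $N$ I would use an individual bound on $\lambda_{\tilde f}(n)=\sum_\tau\alpha_\tau\lambda_{F_\tau}(n)$: since $F_\tau = f\,|\,\prod_{p\mid (N/d)}(1+\epsilon_{f,p}p^{k/2}V_p)$ as in Lemma \ref{sp-basis}, the normalised coefficient equals $\sum_{\delta\mid (N/d)}\epsilon_{f,\delta}\,\delta^{1/2}\lambda_f(n/\delta)$, so Deligne's bound $|\lambda_f(p)|\le 2$ gives $|\lambda_{F_\tau}(n)|\ll_\epsilon (nN)^\epsilon N^{1/2}$, and $\sum_\tau|\alpha_\tau|$ is controlled by Chebyshev together with Lemma \ref{ld} and $\langle\tilde f,\tilde f\rangle=1/D(k,N)$. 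A short computation then bounds the near-$x$ contribution by $\ll_\epsilon x^{3/4+\epsilon}$ times a quantity no larger than $\mathcal{C}_2$, which is absorbed into the stated error because $\tfrac34<\tfrac34(1+\Delta)$. The far terms with $|x-n|\gg x/T$ and the tails $n\notin(x/2,2x)$ are routine, using the absolute convergence of $A$ at $\Re(s)=1+\Delta/2$. The real care lies precisely in verifying that every one of these coefficient bookkeepings, and in particular the short-interval second moment of $\lambda_{\tilde f}^2$ near $x$, fits under $\mathcal{C}_2\,x^{\frac34(1+\Delta)+\epsilon}$.
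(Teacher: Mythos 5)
Your proposal is correct and follows essentially the same route as the paper: a truncated Perron formula with kernel $x^s/s$ on $\Re(s)=1+\Delta/2$, a contour shift to $\Re(s)=\tfrac12+\tfrac{\Delta}{2}$ picking up the residue $\frac{6}{\pi^2}\prod_{p\mid N}(1+1/p)^{-1}x$, the bound on $|R_{\tilde f,\tilde f}(\tfrac12+\tfrac{\Delta}{2}+it)|$ obtained from the orthogonal-basis decomposition as in the derivation of \eqref{dar1}, and the choice $T=x^{1/4}$. The only cosmetic difference is that you unpack the Perron truncation error via the $|\log(x/n)|$ kernel, whereas the paper simply invokes the standard form of the error with the pointwise coefficient bound \eqref{last}-type estimate $\lambda_{\tilde f}^2(n)\ll kN^2\log k\,e^{c'\frac{\log(N+1)}{\log\log(N+2)}}n^{\epsilon_1}$ for $x\notin\N$; both give the same $x^{1+\Delta/2+\epsilon}/T$ contribution.
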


\begin{proof}
If $a_{\tilde{f}}(n)$ are the Fourier coefficients
of $\tilde{f}$, then $\lambda_{\tilde{f}}(n) :=  
a_{\tilde{f}}(n)/n^{\frac{k-1}{2}}$.
As in Proposition \ref{firstle}, we write
$$
\tilde{f} := \sum_{\tau=1}^{d_{k,N}} \alpha_{\tau} F_{\tau},
$$
where $d_{k,N} := \text{ dim }S_k(N)$ and
$\{ F_{\tau} \}_{1\le \tau \le d_{k, N}}$ is the special
orthogonal basis of $S_k(N)$ (in some fixed order) 
mentioned in Lemma \ref{sp-basis} and set
$$
A_{\tilde{f}} :=  \sum_{\tau =1}^{d_{k,N}} |\alpha_{\tau}|.
$$
If $F_{\tau} = f | \prod_{ p | \frac{N}{d}} (1 + \epsilon_{f,p} p^{k/2} V_p)$,
where $d | N$ and $\epsilon_{f, \delta} = 
\prod_{ p| \delta} \epsilon_{f,p}$, then
$$
 | \lambda_{F_{\tau}}(n)| = | \sum_{\delta | \frac{N}{d}} 
 \epsilon_{f, \delta}~ \delta^{1/2} 
~ \lambda_f( \frac{n}{\delta}) |
 ~\ll~
( \tau(N).~\sqrt{N})~.~\tau(n),
$$
where $\tau(N)$ is the number of divisors of $N$ 
and $\lambda_f( n/ \delta) =0$ if $\delta \nmid n$. Hence 
\begin{eqnarray*}
|\lambda_{\tilde{f}}(n)|^2 
~~=~~
|\sum_{\tau=1}^{d_{k,N}} \alpha_{\tau} \lambda_{F_{\tau}}(n)|^2 
&\ll&
\sum_{1 \le \tau, \tau'  \le d_{k,N}} |\alpha_{\tau}\alpha_{\tau'}| .~~~
 | \lambda_{F_{\tau}}(n)\lambda_{F_{\tau'}}(n)| \\
&\ll&
\tau^2(N).~ N.~A_f^2.~\tau^2(n) \\
&\ll&
k.~ N^2.~\log k.~ 
e^{(c' \frac{\log(N+1)}{\log\log(N+2)})}.
~n^{\epsilon_1} 
\end{eqnarray*}
for any $\epsilon_1 > 0$ and absolute constant $c' >0$. 
Now by Perron's formula (see page 67 of \cite{RM}) and for
$x \not\in \N$, we have
$$
\sum_{n \le x} \lambda_{\tilde{f}}^2(n) 
~=~
\frac{1}{2 \pi i} \int_{1 + \Delta/2
- iT}^{1 + \Delta/2 + i T} 
R_{\tilde{f}, \tilde{f}}(s) ~\frac{x^s}{s} ~ds
\phantom{m}
+~~
O\left(k~.~ N^2~.~\log k~.~ 
e^{c' \frac{\log(N+1)}{\log\log(N+2)}}
~.~\frac{x^{1 + \Delta/2 + \epsilon}}{T} \right),
$$
where $R_{\tilde{f}, \tilde{f}}(s)$ is the Rankin-Selberg $L$-function
attached to $\tilde{f}$, $\Delta = \frac{1}{100}$ and $1 \le T \le x$
to be chosen later. Then shifting the line of integration and 
using equations \eqref{res} and \eqref{norm}, we get
\begin{eqnarray*}
\frac{1}{2 \pi i} \int_{1 + \frac{\Delta}{2} - iT}^{1 + \frac{\Delta}{2} + i T} 
R_{\tilde{f}, \tilde{f}}(s) ~\frac{x^s}{s} ~ds
&=&
\frac{6}{\pi^2} \prod_{ p |N \atop \text{ prime }} (1 + 1/p)^{-1} ~x
~+~  \frac{1}{2\pi i} 
 \int_{ 1 +  \frac{\Delta}{2}  - iT}^{\frac{1}{2} + \frac{\Delta}{2} - iT} 
R_{\tilde{f}, \tilde{f}}(s) ~\frac{x^s}{s} ~ds \\
&& 
+~  \frac{1}{2\pi i} \int_{\frac{1}{2} +  
\frac{\Delta}{2} - iT}^{\frac{1}{2} +  \frac{\Delta}{2} + iT}
R_{\tilde{f}, \tilde{f}}(s) ~\frac{x^s}{s} ~ds
~+~  \frac{1}{2\pi i} \int_{\frac{1}{2} +  \frac{\Delta}{2} + iT}^{1
 + \frac{\Delta}{2} + iT} 
R_{\tilde{f}, \tilde{f}}(s) ~\frac{x^s}{s} ~ds\\
&=&
\frac{6}{\pi^2} \prod_{ p |N \atop \text{ prime }} (1 + 1/p)^{-1} ~x
~+~  J_1 ~+~ J_2 ~+~ J_3, \phantom{m}\text{say}.
\end{eqnarray*}
Proceeding as in the derivation of equation \eqref{dar1}, we have
for any $t\in \R, ~\epsilon>0$ that
\begin{eqnarray*}
\left| R_{\tilde{f}, \tilde{f}}( \frac{1}{2} + \frac{\Delta}{2} + it ) \right|
&\ll_{\epsilon}&
(kN)^{\frac{3}{2} + \frac{\Delta}{2}} 
~.~N^{\frac{\Delta(1 + \Delta)}{2(1+ 2\Delta)}}
~.~\sqrt{N}
~.~ \log k
~.~e^{(c'' ~\frac{\log(N+1)}{\log\log(N+2)})}
~. ~ (3 + |t| )^{1+ \Delta  +  \epsilon}, 
\end{eqnarray*}
where the constants $c''> 0$ is absolute.
Hence we have
\begin{eqnarray}\label{dui}
|J_2| 
&\ll_{\epsilon}&
(kN)^{\frac{3}{2} + \frac{\Delta}{2}} 
~.~N^{\frac{\Delta(1 + \Delta)}{2(1+ 2\Delta)}}
~.~\sqrt{N}
~.~ \log k
~.~e^{(c'_1 ~\frac{\log(N+1)}{\log\log(N+2)})}~. 
~x^{\frac{1}{2} + \frac{\Delta}{2}}.
~~T^{1+ \Delta + \epsilon}, 
\end{eqnarray}
where $c'_1> 0$ is an absolute constant.
Again we note that for any $1/2  \le \sigma \le 1 + \Delta / 2$ 
and $t \in \R$ with $|t|  \gg 1$, one has
\begin{eqnarray*}
\left| R_{\tilde{f}, \tilde{f}}( \sigma + it ) \right|
&\ll_{\epsilon}&
(kN)^{2 + \Delta - \sigma} 
~.~N^{\frac{\Delta(1 + \Delta - \sigma)}{1+ 2\Delta}}~.~\sqrt{N}
~.~\log k
~.~e^{(c'_2 ~\frac{\log(N+1)}{\log\log(N+2)})}
~. ~ (3 + |t| )^{ 2(1+ \Delta - \sigma) + \epsilon},
\end{eqnarray*}
where the constant $c'_2 > 0$ is absolute. Hence
\begin{eqnarray*}
|J_1 ~+~ J_3 | 
&\ll&
(kN)^{\frac{3}{2} + \frac{\Delta}{2}} 
~.~N^{\frac{\Delta(1 + \Delta)}{2(1+ 2\Delta)}}
~.~\sqrt{N}
~.~ \log k
~.~e^{(c'_2 ~\frac{\log(N+1)}{\log\log(N+2)})}~. \\
&&
\phantom{mmm}
 \frac{1}{T}
 \left\{ \text{max}_{ \frac{1}{2} + \frac{\Delta}{2} \le \sigma \le 1 + \frac{\Delta}{2}}
\left( \frac{x}{T^2}  \right)^{\sigma} 
T^{2 (1+ \Delta) + \epsilon} \right\},
\end{eqnarray*}
where the constant in $\ll$  and the constants $c>0$
are absolute. Now by choosing $T = x^{1/4}$,
we get
\begin{eqnarray*}
&&
\sum_{n \leq x} \lambda_{\tilde{f}}^2(n) \\
&=&
\frac{6}{\pi^2} \prod_{ p |N \atop \text{ prime }} (1 + 1/p)^{-1} ~x 
~+~~
O_{\epsilon} \left( (kN)^{\frac{3}{2} + \frac{\Delta}{2}} 
~.~N^{\frac{\Delta(1 + \Delta)}{2(1+ 2\Delta)}}
~.~\sqrt{N}
~.~ \log k
~.~e^{(c_4 ~\frac{\log(N+1)}{\log\log(N+2)})}~.
~~x^{\frac{3}{4}( 1 + \Delta) + \epsilon}
\right),
\end{eqnarray*}
where $0< \epsilon < 97/400$ and $c_4>0$ is an absolute constant. 
This completes the proof. 
\end{proof}

\begin{prop}\label{thle}
For a square-free $N$, let $f$ be a non-zero cusp form of weight 
$k$ for $\Gamma_0(N)$ with real Fourier coefficients. Also let
$\tilde{f}$ be as in Proposition \ref{firstle} and 
$\lambda_{\tilde{f}}(n)$ be the normalized $n$-th Fourier
coefficient of $\tilde{f}$. We then have
\begin{eqnarray}\label{hecke-1}
\sum_{n \leq x} \lambda_{\tilde{f}}(n) \log\left(\frac{x}{n}\right) 
&\ll&
\sqrt{k\log k}~.~N^{\frac{6}{5} + \frac{\Delta}{2}} 
~.~e^{(a_1 ~\frac{\log(N+1)}{\log\log(N+2)})}~.
~~x^{\frac{1}{5}}, \\
\sum_{n \leq x} \lambda_{\tilde{f}}(n)
&\ll&  
\sqrt{k \log k}~.~N^{\frac{6}{5} + \frac{\Delta}{2}} 
~.~e^{(a_2 ~\frac{\log(N+1)}{\log\log(N+2)})}~.
~~x^{\frac{3}{5} + \frac{\Delta}{2}}, \nonumber
\end{eqnarray}
where $\Delta = \frac{1}{100}$ and
$a_1, a_2 >0$ are absolute constants. 
\end{prop}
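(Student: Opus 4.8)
The plan is to attach to $\tilde f$ its standard Hecke $L$-function $L(\tilde f, s) := \sum_{n \ge 1} \lambda_{\tilde f}(n)\, n^{-s}$ and to read off both sums from it by Perron's formula: for the first (logarithmically smoothed) sum I would use the kernel $x^s/s^2$, writing $\sum_{n \le x} \lambda_{\tilde f}(n)\log(x/n) = \frac{1}{2\pi i}\int_{(c)} L(\tilde f, s)\, x^s s^{-2}\, ds$ with $c = 1 + \Delta/2$, and for the second (sharp) sum the kernel $x^s/s$ truncated at height $T$, exactly as in the derivation of Proposition \ref{secle}. Since $\tilde f$ is a cusp form, $L(\tilde f, s)$ is entire; hence when I push the contour to the left of $\Re s = 1$ there is no residue, and the entire sum is an error term. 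This is the analytic incarnation of the expected cancellation among the $\lambda_{\tilde f}(n)$, and it is what makes these linear sums so much smaller than the square sums of Propositions \ref{firstle} and \ref{secle}.

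To estimate $L(\tilde f, s)$ on a vertical line inside the critical strip I would decompose $\tilde f = \sum_\tau \alpha_\tau F_\tau$ in the orthogonal basis of Lemma \ref{sp-basis}, so that $L(\tilde f, s) = \sum_\tau \alpha_\tau L(F_\tau, s)$. For a basis vector coming from a newform $f$ of level $d \mid N$ one has $L(F_\tau, s) = L(f, s)\prod_{p \mid N/d}(1 + \epsilon_{f,p}\, p^{1/2 - s})$, so the problem splits into (i) bounding the coefficients $\alpha_\tau$ and (ii) bounding the standard newform $L$-functions $L(f,s)$. For (i) I would reuse the machinery already assembled: Chebyshev's inequality gives $(\sum_\tau |\alpha_\tau|)^2 \le d_{k,N}\sum_\tau |\alpha_\tau|^2$, orthogonality gives $\sum_\tau |\alpha_\tau|^2 \langle F_\tau, F_\tau\rangle = \langle \tilde f, \tilde f\rangle = D(k,N)^{-1}$, and Lemma \ref{ld} converts the lower bound on $\langle F_\tau, F_\tau\rangle$ together with $\dim S_k(N) \ll kN\log\log(N+2)$ into the factors $\sqrt{k\log k}$ and $e^{O(\log(N+1)/\log\log(N+2))}$ of the statement, the super-exponential $k$-dependence of $D(k,N)$ cancelling precisely as in Proposition \ref{firstle}.

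For (ii) I would produce an explicit convexity bound for $L(f,s)$ uniform in $k$ and $N$, in the same spirit as the Rankin--Selberg estimates proved above but now for the degree-two standard $L$-function. On $\Re s = 1 + \Delta$ absolute convergence gives $|L(f,s)| \ll 1$; on $\Re s = -\Delta$ the functional equation for the completed $L$-function, together with Stirling applied to the ratio $\Gamma(\tfrac{k+1}{2} + \Delta - it)/\Gamma(\tfrac{k-1}{2} - \Delta + it)$ and the conductor factor $(\sqrt d/2\pi)^{1-2s}$, yields a bound of the shape $(k+|t|)^{1+2\Delta}\, d^{1/2 + \Delta}$. Feeding these two endpoints into Rademacher's Proposition \ref{rad} over the strip $-\Delta \le \sigma \le 1 + \Delta$ interpolates them to every intermediate line; the finite Euler product I would bound crudely by $\prod_{p \mid N/d}(1 + p^{1/2 - \sigma})$, and combining with the coefficient bound from (i) gives a clean estimate for $|L(\tilde f, \sigma + it)|$ on each line $\tfrac12 + \tfrac{\Delta}{2} \le \sigma \le 1 + \Delta$.

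Finally I would shift the contour and optimise. For the smoothed sum the kernel $s^{-2}$ makes the $t$-integral converge comfortably, so I can push the line well to the left and read off a power of $x$ governed by that line, arriving at the exponent $\tfrac15$. For the sharp sum the kernel is only $s^{-1}$, so I must truncate at height $T$ and balance the shifted vertical integral against the truncation error, the latter controlled by the pointwise bound $|\lambda_{\tilde f}(n)| \ll \sqrt{k}\,N(\log k)^{1/2} e^{O(\cdots)} n^{\varepsilon}$ established in the proof of Proposition \ref{secle}; choosing $T$ as a suitable power of $x$ produces the exponent $\tfrac35 + \tfrac{\Delta}{2}$. The main obstacle I anticipate is exactly step (ii) combined with this balancing: extracting the stated $N^{6/5 + \Delta/2}$ and, above all, keeping the $k$-dependence down to $\sqrt{k\log k}$ requires that the analytic ($L$-function) part contribute essentially no extra power of $k$ beyond what the coefficients $\alpha_\tau$ already carry, which forces care about whether a pointwise convexity bound suffices or whether an averaged (mean-value) estimate over the basis $\{F_\tau\}$ is needed.
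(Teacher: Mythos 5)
Your plan reproduces the paper's proof essentially step for step: the same Perron kernels ($x^s/s^2$ untruncated for the smoothed sum, $x^s/s$ truncated at height $T$ for the sharp sum, with the truncation error controlled by the pointwise bound on $|\lambda_{\tilde f}(n)|$ from \eqref{last}), the same decomposition $L(\tilde f,s)=\sum_\tau \alpha_\tau L(F_\tau,s)$ over the basis of Lemma \ref{sp-basis} with $L(F_\tau,s)$ reduced to $L(f,s)$ times a finite Euler factor (the paper's Lemma \ref{hecke-le1}, where that factor is bounded by $\tau_{1/2-\sigma}(N)$), the same treatment of $A_{\tilde f}=\sum_\tau|\alpha_\tau|$ via Chebyshev, orthogonality and Lemma \ref{ld}, and the same Rademacher interpolation followed by a shift to $\sigma=\tfrac15$ and the choice $T=x^{1/2}$. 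The one substantive point of divergence is the issue you yourself flag at the end: your endpoint bound at $\Re s=-\Delta$ carries the factor $(k+|t|)^{1+2\Delta}$ coming from Stirling applied to the weight-$k$ gamma factor $\Gamma\bigl(s+\tfrac{k-1}{2}\bigr)$, and after interpolation to $\sigma=\tfrac15$ this contributes an extra $k^{4/5+\Delta}$ on top of the $\sqrt{kN\log k}$ from $A_{\tilde f}$, which is incompatible with the stated $\sqrt{k\log k}$; you leave this unresolved. The paper closes this gap by fiat in Lemma \ref{hecke-le}: it writes the completed $L$-function as $N^{s/2}(2\pi)^{-s}\Gamma(s)L(f,s)$, so its gamma-ratio is $\Gamma(1+\Delta-it)/\Gamma(-\Delta+it)$ with no $k$ at all, giving $|L(f,\sigma+it)|\ll N^{(1+\Delta-\sigma)/2}(3+|t|)^{1+\Delta-\sigma}$ uniformly in $k$. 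So the missing ingredient in your write-up is exactly a $k$-uniform convexity bound for $L(f,s)$; as your (more standard) normalization of the functional equation suggests, whether the paper's $k$-free version is legitimate is a real question, but it is the step the paper relies on, and without it neither your argument nor the paper's reaches the claimed $\sqrt{k\log k}$ dependence. No averaged estimate over the basis is used in the paper.
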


We need the following Lemmas to prove Proposition \ref{thle}.
\begin{lem}\label{hecke-le}
For a square-free $N$, let $f$ be a normalized
newform of weight $k$ for $\Gamma_0(N)$. 
Then for any $ - \Delta < \sigma < 1 + \Delta$ with
$\Delta = \frac{1}{100}$ and any $t \in \R$,
we have 
$$
| L(f, \sigma + it)| 
~~\ll~~
N^{\frac{1 + \Delta - \sigma}{2}} ~.~
(3 + |t|)^{ 1 + \Delta - \sigma},
$$
where the constant in $\ll$ is absolute. 
\end{lem}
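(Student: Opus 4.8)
The plan is to obtain the estimate by the Phragm\'en--Lindel\"of convexity principle in the form of Proposition \ref{rad}, applied to $g(s) = L(f,s)$ in the strip $-\Delta \le \sigma \le 1+\Delta$. Since $f$ is a cusp form, $L(f,s) = \sum_{n\ge 1} \lambda_f(n) n^{-s}$ continues to an entire function of finite order, so the hypotheses of Proposition \ref{rad} are available with $a = -\Delta$ and $b = P = 1 + \Delta$; note that $P + a = 1 > 0$. It then remains to produce admissible bounds on the two vertical edges $\sigma = b$ and $\sigma = a$ and to check that the resulting exponents interpolate to the claimed shape.

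On the right edge $\sigma = 1+\Delta$ the series is absolutely convergent, and Deligne's bound $|\lambda_f(n)| \le \tau(n)$ gives $|L(f, 1+\Delta+it)| \le \sum_{n\ge1} \tau(n) n^{-(1+\Delta)} = \zeta(1+\Delta)^2 \ll 1$, uniformly in $t$, $k$ and $N$. Thus on this edge I may take $\beta = 0$ and $F$ an absolute constant.

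The substantive input is the left edge $\sigma = -\Delta$, which I would reach through the functional equation of the newform $L$-function. In the analytic normalization one has $L(f,s) = \varepsilon_f \left(\frac{\sqrt N}{2\pi}\right)^{1-2s} \frac{\Gamma(1-s+\frac{k-1}{2})}{\Gamma(s+\frac{k-1}{2})} L(f,1-s)$ with $|\varepsilon_f|=1$. Evaluating at $s = -\Delta + it$, the level factor contributes $\left|\left(\frac{\sqrt N}{2\pi}\right)^{1+2\Delta-2it}\right| \ll N^{\frac12 + \Delta}$, while $L(f,1+\Delta-it)$ is $\ll 1$ by the right-edge bound. The remaining archimedean ratio $\left|\Gamma(\frac{k-1}{2}+1+\Delta-it)\right| \big/ \left|\Gamma(\frac{k-1}{2}-\Delta+it)\right|$ is exactly where the $t$-growth enters; using $|\Gamma(\bar z)| = |\Gamma(z)|$ to align the imaginary parts and then Stirling's formula (as in Corollary \ref{bd1} for the symmetric square) yields a factor $\ll (3+|t|)^{1+2\Delta}$. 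Altogether this gives $|L(f,-\Delta+it)| \ll N^{\frac12+\Delta}(3+|t|)^{1+2\Delta}$, so on this edge I take $E = C\, N^{\frac12+\Delta} = C\, N^{(1+\Delta-a)/2}$ and $\alpha = 1+2\Delta$, with $|P + a + it| \asymp (3+|t|)$. Controlling this gamma ratio carefully---so that only the level appears in the $N$-power and the weight does not pollute the $(3+|t|)$-power---is the main obstacle, and is the one place where a genuinely careful archimedean estimate is required.

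Finally I would feed these two edge bounds into Proposition \ref{rad}. Since $\alpha = 1+2\Delta \ge 0 = \beta$ and $b - a = 1+2\Delta$, the factor carrying the left-edge data receives weight $\frac{b-\sigma}{b-a} = \frac{1+\Delta-\sigma}{1+2\Delta}$, so the interpolated exponent of $N$ is $\left(\frac12+\Delta\right)\cdot\frac{1+\Delta-\sigma}{1+2\Delta} = \frac{1+\Delta-\sigma}{2}$ and that of $(3+|t|) \asymp |P+\sigma+it|$ is $(1+2\Delta)\cdot\frac{1+\Delta-\sigma}{1+2\Delta} = 1+\Delta-\sigma$, while the right edge contributes only an absolute constant (as $\beta=0$). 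This is precisely $|L(f,\sigma+it)| \ll N^{\frac{1+\Delta-\sigma}{2}}(3+|t|)^{1+\Delta-\sigma}$ for all $-\Delta < \sigma < 1+\Delta$, as claimed.
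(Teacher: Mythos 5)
Your overall route is the same as the paper's: bound $L(f,s)$ trivially on the line $\Re(s)=1+\Delta$, pass to $\Re(s)=-\Delta$ via the functional equation, and interpolate with Proposition \ref{rad}; your interpolation arithmetic at the end is correct. The gap sits exactly at the step you yourself flag as ``the main obstacle'' and then assert without proof: the archimedean ratio $\bigl|\Gamma(\tfrac{k-1}{2}+1+\Delta-it)\bigr|\big/\bigl|\Gamma(\tfrac{k-1}{2}-\Delta+it)\bigr|$ is \emph{not} $\ll (3+|t|)^{1+2\Delta}$ with an absolute constant. By Stirling, $\Gamma(z+1+2\Delta)/\Gamma(z)\sim z^{1+2\Delta}$, and here $|z|\asymp k+|t|$, so the ratio is $\asymp (k+|t|)^{1+2\Delta}$; already at $t=0$ it grows like $k^{1+2\Delta}$. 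The reflection $|\Gamma(\bar z)|=|\Gamma(z)|$ aligns the imaginary parts but does nothing to remove the weight from the modulus. Hence the left-edge bound your argument actually delivers is $|L(f,-\Delta+it)|\ll N^{\frac12+\Delta}(k+|t|)^{1+2\Delta}$, and Proposition \ref{rad} then yields $N^{\frac{1+\Delta-\sigma}{2}}(k+|t|)^{1+\Delta-\sigma}$ rather than the $k$-free bound in the statement. No purely convexity-based argument can do better, since at $\sigma=\tfrac12$ a $k$-independent bound would be a dramatic subconvexity statement in the weight aspect.

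For comparison, the paper reaches the $k$-free bound only because it writes the completed $L$-function as $L^*(f,s)=N^{s/2}(2\pi)^{-s}\Gamma(s)L(f,s)$ with functional equation $s\mapsto 1-s$, i.e.\ with archimedean factor $\Gamma(s)$ in place of $\Gamma\bigl(s+\tfrac{k-1}{2}\bigr)$; with that gamma factor the left-edge ratio is $|\Gamma(1+\Delta-it)/\Gamma(-\Delta+it)|\ll|1+it|^{1+2\Delta}$ and no $k$ appears. That completed form is not the correct one for the analytically normalized $L$-function of a weight-$k$ newform when $k>1$, so your more careful treatment of the archimedean factor does not resolve the difficulty so much as expose it: to prove the lemma exactly as stated one would need an input beyond convexity in the $k$-aspect, and your proposal does not supply one.
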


\medskip
\noindent
{\bf Proof of Lemma \ref{hecke-le}.}
It is easy to see that
$$
| L(f, 1+ \Delta + it)| ~~\ll~~ 1,
\phantom{m}
\forall  \in \R,
$$
where $L(f, s)$ is the Hecke $L$-function attached
to $f$. Now using the functional equation
$$
L^*(f, s) = L^*(f , 1-s),
$$
where 
$L^*(f, s) = N^{s/2} (2\pi)^{-s} \Gamma(s) L(f, s)$,
we get
$$
| L(f, - \Delta + it)| ~~\ll~~ N^{1/2 + \Delta} ~.~ |1 + it|^{1 + 2\Delta}
$$
for any $t \in \R$.
Finally using Proposition \ref{rad}, we get
for any $- \Delta < \sigma < 1+ \Delta$ and 
any $t \in \R$ that
$$
| L(f, \sigma + it)| ~~\ll~~ 
N^{\frac{(1 + \Delta - \sigma)}{2}} 
~.~(3 + |t|)^{1+ \Delta - \sigma},
$$ 
where the constant in $\ll$ is absolute.
This completes the proof of Lemma \ref{hecke-le}.

\bigskip
\begin{lem}\label{hecke-le1}
For square-free integer $N$ and $d | N$, let
$f \in S_k^{\text{new}}(d)$ be normalized Hecke 
eigen forms.  For $\epsilon_{f, p} \in \{ \pm 1\}$, define
$$
F:= f ~|~ \prod_{ p | \frac{N}{d}} ( 1 + \epsilon_{f,p} p^{k/2} V_p)
~=~
\sum_{ \delta | \frac{N}{d}} 
\epsilon_{ f, \delta}\delta^{k/2} f |V_{\delta},
\phantom{m}
\epsilon_{f, \delta} := \prod_{ p | \delta} \epsilon_{f, p}
$$
as in Lemma \ref{sp-basis}. Also let  $\epsilon > 0$. 
Then for $- \Delta < \sigma < 1 + \Delta$ with
$\Delta = \frac{1}{100}$ and any $t \in \R$, 
we have
\begin{eqnarray*}
\left|  
L(F, \sigma + it ) 
\right|
~~\ll~~
\tau_{1/2 - \sigma}(N) ~.~ 
N^{\frac{(1 + \Delta - \sigma)}{2}}
~.~(3 + |t|)^{1+ \Delta - \sigma},
\end{eqnarray*}
where the constant in $\ll$ is absolute and 
$\tau_{\ell}(N) := \sum_{d|N} d^{\ell}$. 
\end{lem}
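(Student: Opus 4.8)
The plan is to reduce the bound for $L(F,\sigma+it)$ to the single-newform estimate of Lemma \ref{hecke-le} by exhibiting an exact factorization of $L(F,s)$ as a short Dirichlet polynomial times $L(f,s)$. Recall from the definition of $F$ in Lemma \ref{sp-basis}, together with the computation carried out in the proof of Proposition \ref{secle}, that the normalized Fourier coefficients of $F$ satisfy $\lambda_F(n)=\sum_{\delta\mid \frac{N}{d}}\epsilon_{f,\delta}\,\delta^{1/2}\,\lambda_f(n/\delta)$, with the convention $\lambda_f(n/\delta)=0$ when $\delta\nmid n$. Substituting this into the Dirichlet series for $L(F,s)$ and writing $n=\delta m$, I would obtain, for $\Re(s)>1$ and hence for all $s$ by analytic continuation of the entire function $L(f,s)$,
\[
L(F,s)=\sum_{n\ge 1}\frac{\lambda_F(n)}{n^{s}}
=\sum_{\delta\mid \frac{N}{d}}\epsilon_{f,\delta}\,\delta^{1/2}\sum_{m\ge 1}\frac{\lambda_f(m)}{(\delta m)^{s}}
=\Bigl(\sum_{\delta\mid \frac{N}{d}}\epsilon_{f,\delta}\,\delta^{1/2-s}\Bigr)L(f,s).
\]

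Next I would bound the two factors separately on the line $\Re(s)=\sigma$. For the Dirichlet polynomial, since every $\delta$ dividing $\frac{N}{d}$ also divides $N$, the triangle inequality gives, uniformly in $t$,
\[
\Bigl|\sum_{\delta\mid \frac{N}{d}}\epsilon_{f,\delta}\,\delta^{1/2-s}\Bigr|
\le\sum_{\delta\mid \frac{N}{d}}\delta^{1/2-\sigma}
\le\sum_{\delta\mid N}\delta^{1/2-\sigma}
=\tau_{1/2-\sigma}(N).
\]
For the second factor I would apply Lemma \ref{hecke-le} to the newform $f$, whose level is $d$ rather than $N$; this yields
\[
|L(f,\sigma+it)|\ll d^{(1+\Delta-\sigma)/2}\,(3+|t|)^{1+\Delta-\sigma}
\le N^{(1+\Delta-\sigma)/2}\,(3+|t|)^{1+\Delta-\sigma},
\]
where the last inequality uses $d\mid N$ together with the hypothesis $\sigma<1+\Delta$, which makes the exponent $1+\Delta-\sigma$ positive. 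Multiplying the two estimates produces exactly the claimed bound.

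The only genuinely delicate point is the bookkeeping behind the factorization, not any analytic difficulty. One must verify that the action of the operators $V_\delta$ on Fourier coefficients is recorded with the correct normalization, so that the factor $\delta^{k/2}$ appearing in the definition of $F$ combines with the factor $n^{-(k-1)/2}$ that occurs in passing to normalized coefficients to leave precisely $\delta^{1/2}$; only then does the identity $L(F,s)=\bigl(\sum_{\delta}\epsilon_{f,\delta}\delta^{1/2-s}\bigr)L(f,s)$ hold on the nose. Once this is in place the remaining work is routine, since Lemma \ref{hecke-le} (itself obtained from the functional equation and Rademacher's convexity principle, Proposition \ref{rad}) already encodes all the analytic input. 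A secondary point to track is that the level of $f$ is a divisor $d$ of $N$, so Lemma \ref{hecke-le} must be applied at level $d$ and then relaxed to $N$; this is harmless precisely because $1+\Delta-\sigma>0$ throughout the stated range $-\Delta<\sigma<1+\Delta$.
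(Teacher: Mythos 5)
Your proposal is correct and follows essentially the same route as the paper: factor $L(F,s)=\bigl(\sum_{\delta\mid N/d}\epsilon_{f,\delta}\delta^{1/2-s}\bigr)L(f,s)$ via the coefficient identity $\lambda_F(n)=\sum_{\delta}\epsilon_{f,\delta}\sqrt{\delta}\,\lambda_f(n/\delta)$, bound the Dirichlet polynomial by $\tau_{1/2-\sigma}(N)$, and invoke Lemma \ref{hecke-le}. Your extra remark about applying Lemma \ref{hecke-le} at level $d$ and relaxing to $N$ using $1+\Delta-\sigma>0$ is a small point the paper passes over silently, and it is handled correctly.
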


\bigskip
\noindent
{\bf Proof of Lemma \ref{hecke-le1}.}
By definition, for any $z \in \mathcal{H}$, we have
 $$
F(z)
~=~ 
 \sum_{ \delta | \frac{N}{d}} 
\epsilon_{ f, \delta}~\delta^{k/2}( f |V_{\delta})(z)
~=~
\sum_{ \delta | \frac{N}{d}} 
\epsilon_{ f, \delta}~\delta^{k/2} f(\delta z).
$$
This implies that
$$
\lambda_F(n) ~=~ \sum_{ \delta | \frac{N}{d}} 
\epsilon_{ f, \delta}~\sqrt{\delta} ~\lambda_f(\frac{n}{\delta}).
$$
Hence for any $s \in \C$ with $\Re(s) > 1$,
we have
$$
L(F, s) ~=~ ( \sum_{ \delta | \frac{N}{d}} 
\epsilon_{ f, \delta}~\delta^{-s + 1/2} ) ~L(f, s).
$$
Thus for any $0 < \sigma < 1+ \Delta$ and any $t \in \R$,
we have
$$
| L( F, \sigma + it) | 
~~\ll~~
\tau_{1/2 - \sigma}(N) ~.~ |L(f, \sigma + it)|
~~\ll~~
\tau_{1/2 - \sigma}(N)
~.~N^{\frac{1 + \Delta - \sigma}{2}} ~.~
(3 + |t|)^{ 1 + \Delta - \sigma}.
$$
This completes the proof of Lemma \ref{hecke-le1}.

\medskip
We now complete the proof of Proposition \ref{thle}.
\begin{proof}
As before, we write
$$
\tilde{f} := \sum_{\tau=1}^{d_{k,N}} \alpha_{\tau} F_{\tau},
$$
where $d_{k,N} := \text{ dim }S_k(N)$ and
$\{ F_{\tau} \}_{1\le \tau \le d_{k, N}}$ is the special
orthogonal basis of $S_k(N)$ (in some fixed order) 
mentioned in Lemma \ref{sp-basis} and set
$$
A_{\tilde{f}} :=  \sum_{\tau =1}^{d_{k,N}} |\alpha_{\tau}|.
$$
Since
$$
L(\tilde{f}, s) 
~=~ 
\sum_{\tau} \alpha_{\tau} ~L(F_{\tau}, s),
$$
using Lemma \ref{hecke-le1}, we get
for any $- \Delta < \sigma < 1 + \Delta$
and any $t \in \R$
\begin{eqnarray}\label{he-re}
\left|  
L(\tilde{f},  \sigma + it ) 
\right|
&\ll&
A_{\tilde f}
~.~
\tau_{1/2 - \sigma}(N)
~.~N^{\frac{1 + \Delta - \sigma}{2}} ~.~
(3 + |t|)^{ 1 + \Delta - \sigma} \nonumber \\
&\ll&
\sqrt{k} ~.~\sqrt{\log k}
~.~ e^{(b_1 ~\sqrt{\frac{\log(N+1)}{\log\log(N+2)})}}
~.~ \tau_{1/2 - \sigma}(N)
~.~N^{1 + \frac{\Delta - \sigma}{2}} ~.~
(3 + |t|)^{ 1 + \Delta - \sigma}, 
\end{eqnarray}
where $b_1$ is an absolute constant
and also we have used the estimate
$$
A_{\tilde f} 
~~\ll~~
\sqrt{k N \log k}
~.~ e^{(b_1 ~\sqrt{\frac{\log(N+1)}{\log\log(N+2)})}}.
$$
In particular, for $\sigma = \frac{1}{5}$, we have
\begin{equation}\label{he-re1}
\left|  
L(\tilde{f},  ~\frac{1}{5} + it ) 
\right|
~~\ll~~
\sqrt{k \log k} 
~.~ N^{\frac{6}{5} + \frac{\Delta}{2}}
~.~
e^{(a_1 ~\frac{\log(N+1)}{\log\log(N+2)})}
~.~
(3 + |t|)^{ \frac{4}{5} + \Delta} 
\phantom{m} \forall ~ t \in \R,
\end{equation}
where $a_1$ is an absolute constant. 
Then
$$
\sum_{ n \le x} \lambda_{\tilde{f}}(n) \log \left( \frac{x}{n} \right)
~~=~~
\frac{1}{2\pi i}
 \int_{ 1+ \frac{\Delta}{2} - i\infty}^{1
 + \frac{\Delta}{2} + i\infty} L(\tilde{f}, s)~ \frac{x^s}{s^2} ~ds.
$$
Now by shifting the line of integration to the line 
$\sigma = \frac{1}{5}$, we get
$$
 \sum_{ n \le x} \lambda_{\tilde{f}}(n) \log \left( \frac{x}{n} \right)
 ~~\ll~~
\sqrt{k\log k}
~.~ N^{\frac{6}{5} + \frac{\Delta}{2}}
~.~
e^{(a_1 ~\frac{\log(N+1)}{\log\log(N+2)})}
~.~ x^{\frac{1}{5}}.
$$
For the second part of the Proposition, we proceed as 
in Proposition \ref{secle}. First
note that
\begin{eqnarray}\label{last}
|\lambda_{\tilde{f}}(n) |
~~=~~
|\sum_{\tau} \alpha_{\tau} \lambda_{F_{\tau}}(n)| 
&\ll&
A_{\tilde f} ~.~ \tau(N)~.~ \sqrt{N}~.~ \tau(n) \\
&\ll&
\sqrt{k \log k}~.~ N
.~ e^{(a_1' ~\frac{\log(N+1)}{\log\log(N+2)})} .~ \tau(n), \nonumber
\end{eqnarray}
where $a_1'$ is an absolute constant
and hence by Perron's formula for $x\not\in \Z$, we have
$$
\sum_{ n \le x} \lambda_{\tilde{f}}(n)
~~=~~
\frac{1}{2\pi i}
 \int_{ 1+ \frac{\Delta}{2} - iT}^{1
 + \frac{\Delta}{2} + iT} L(\tilde{f}, s)~ \frac{x^s}{s} ~ds
 ~~+~~
 O\left(\sqrt{k \log k}. ~N. ~e^{(a_1' ~\frac{\log(N+1)}{\log\log(N+2)})} 
 .~~\frac{x^{1+ \Delta}}{T} \right),
$$
where $1 \le T \le x$ to be chosen later. Then shifting the line of
integration, we get
\begin{eqnarray*}
\frac{1}{2 \pi i} \int_{1 + \frac{\Delta}{2} - iT}^{1 + \frac{\Delta}{2} + i T} 
L(\tilde{f}, s)~ \frac{x^s}{s^2} ~ds
&=&
\frac{1}{2\pi i} ( \int_{ 1 +  \frac{\Delta}{2}  - iT}^{\frac{1}{5}  - iT} 
 ~+~   \int_{\frac{1}{5} - iT}^{\frac{1}{5} + iT}
~+~ \int_{\frac{1}{5}  + iT}^{1 +  \frac{\Delta}{2} + iT} )
~L(\tilde{f}, s) ~\frac{x^s}{s} ~ds\\
&=&
Y_1 ~+~ Y_2 ~+~ Y_3, \phantom{m}\text{say}.
\end{eqnarray*}
Using equation \eqref{he-re1}, we get
\begin{eqnarray}\label{dui}
|Y_2| 
&\ll&
\sqrt{k \log k}~.~ N^{\frac{6}{5} + \frac{\Delta}{2}}.~
e^{(a_1 ~\frac{\log(N+1)}{\log\log(N+2)})}
~.~ x^{\frac{1}{5}}.~ T^{4/5 + \Delta}~.
\end{eqnarray}
Now using \eqref{he-re}, we get
\begin{eqnarray*}
|Y_1 ~+~ Y_3 | 
&\ll&
\sqrt{k \log k} ~.~ e^{(a_2 ~\frac{\log(N+1)}{\log\log(N+2)})}
 .~N^{\frac{6}{5} + \frac{\Delta}{2}} .~
 \frac{1}{T}
 \left\{ \text{max}_{ \frac{1}{5} \le \sigma \le 1 + \frac{\Delta}{2}}
\left( \frac{x}{T}  \right)^{\sigma} \right\}~.~ T^{1+ \Delta} , \\
&\ll&
\sqrt{k \log k} ~.~ e^{(a_2 ~\frac{\log(N+1)}{\log\log(N+2)})}
 .~N^{\frac{6}{5} + \frac{\Delta}{2}} 
 ~.~~ \frac{x^{1 + \frac{\Delta}{2}}}{T^{1 - \frac{\Delta}{2}}}
\end{eqnarray*}
where the constant in $\ll$ and the constant $a_2 >0$ are 
absolute. Now by choosing $T = x^{1/2}$,
we get
\begin{eqnarray*}
\sum_{n \leq x} \lambda_{\tilde{f}}(n) 
&\ll&
 \sqrt{k \log k}~.~ e^{(a_2 ~\frac{\log(N+1)}{\log\log(N+2)})}
 .~N^{\frac{6}{5} + \frac{\Delta}{2}} ~.
~~x^{ \frac{3}{5} + \frac{\Delta}{2}}.
\end{eqnarray*}
This completes the proof. 
\end{proof}

We are now in a position to complete the proof
of Theorem \ref{thmthree}.

\bigskip
\noindent
{\bf Proof of Theorem \ref{thmthree}.} 
Let $D(k, N)$ be as in Proposition \ref{firstle} and
as before we normalise $f$ by the following condition
\begin{equation}\label{nor}
D(k, N) . < f, f > =1,
\end{equation}
where $ < ,>$ is the Petersson inner product.
This means we replace $f$ by 
$$
\tilde{f} := \frac{f}{ \sqrt{D(k,N)} || f ||}.
$$
Let $\tilde{\beta}(n)$ be the Fourier coefficients
of $\tilde{f}$ and $\tilde{\beta}(n) := \lambda_{\tilde{f}}(n) n^{\frac{k-1}{2}}$.
Note that $\tilde{\beta}(n)$ and $\lambda_{\tilde{f}}(n)$ have the same sign.
Hence it is sufficient to show the sign change
of $\lambda_{\tilde{f}}(n)$ in the desired range.

Assume the contrary, i.e. $\lambda_{\tilde{f}}(n)$ has constant
sign for $n \in (x, x+h]$, where $h= x^{13/14 + \epsilon}$
and $\epsilon >0$. Without loss of generality,
we can assume that $0 < \epsilon <1/200$ as constant sign
in a bigger interval implies constant sign in a smaller interval. 
Since we can always replace $\tilde{f}$
by $- \tilde{f}$, we can assume that $\lambda_{\tilde{f}}(n) \ge 0$ for
all $n \in (x, x+h]$. 

By Cauchy-Schwartz inequality, we have
\begin{eqnarray}\label{imp}
&&
\sum_{ x < n \leq x+h} \lambda_{\tilde{f}}^2(n) \log^2\left(\frac{x + h}{n}\right) \\
&\leq& 
\left(\sum_{ x < n \leq x+h} |\lambda_{\tilde{f}}(n)| \log^2
\left(\frac{x + h}{n}\right) \right)^{1/2} 
\left(\sum_{ x < n \leq x+h} |\lambda_{\tilde{f}}(n)|^3 
\log^2\left(\frac{x + h}{n}\right)\right)^{1/2}  \nonumber \\
&=& 
\left(\sum_{ x < n \leq x+h} \lambda_{\tilde{f}}(n) \log^2
\left(\frac{x + h}{n}\right)\right)^{1/2}
\left(\sum_{ x < n \leq x+h} \lambda_{\tilde{f}}(n)^3 \log^2
\left(\frac{x + h}{n}\right)\right)^{1/2}, 
\nonumber
\end{eqnarray}
by our assumption on $\lambda_{\tilde{f}}(n)$.
We will estimate the left hand side of \eqref{imp} from below
and the two terms on the right hand side from
above and derive a contradiction for sufficiently large $x$.
Note that
\begin{eqnarray*}
&&
\sum_{x < n \leq x+h} \lambda_{\tilde{f}}^2(n) \log^2\left(\frac{x+h}{n}\right) \\
&=&
\sum_{ n \leq x+h} \lambda_{\tilde{f}}^2(n) \log^2\left(\frac{x + h}{n}\right)
~-~
\sum_{ n \leq x} \lambda_{\tilde{f}}^2(n) \log^2\left(\frac{x + h}{n}\right) \\
&=&
\sum_{ n \leq x+h} \lambda_{\tilde{f}}^2(n) \log^2\left(\frac{x + h}{n}\right)
~-~
\sum_{ n \leq x} \lambda_{\tilde{f}}^2(n) \log^2\left(\frac{x}{n}\right) 
~-~
\log^2\left(1+ \frac{h}{x}\right)\sum_{n \leq x} \lambda_{\tilde{f}}^2(n)  \\
&& \phantom{mmmmm}
~-~~~
2\log\left(1 + \frac{h}{x}\right)\sum_{n \leq x} \lambda_{\tilde{f}}^2(n)
 \log\left(\frac{x}{n}\right).
\end{eqnarray*}
We know from page 538 of \cite{CK} that
$$
\sum_{n \leq y} \lambda_{\tilde{f}}^2(n) \log^2\left(\frac{y}{n}\right) 
~=~
\frac{12}{\pi^2} \prod_{ p |N \atop \text{ prime }} (1 + 1/p)^{-1} ~y
~+~  O \left( k^{3/2} .~N^2. ~\log^5k.~\sqrt{\Phi_k(N)}.~
e^{\bar{c}_1 \frac{\log (N+1)}{\log\log (N+2)}}. ~y^{1/2} \right),
$$
where $\Phi_k(N)$ is as in \eqref{new} and
$\bar{c}_1 > 0$ is an absolute constant.
Hence we have
\begin{eqnarray}\label{eq-r}
&& \sum_{ n \leq x+h} \lambda_{\tilde{f}}^2(n) \log^2\left(\frac{x + h}{n}\right)
~~-~~ 
\sum_{ n \leq x} \lambda_{\tilde{f}}^2(n) \log^2\left(\frac{x}{n}\right) \\
&&
~~=~~ 
\frac{12}{\pi^2} \prod_{ p |N \atop \text{ prime }} (1 + 1/p)^{-1} ~h
~+~  O\left( k^{3/2}. ~N^2. ~\log^5k.~\sqrt{\Phi_k(N)}~.~
e^{\bar{c}_1 \frac{\log (N+1)}{\log\log (N+2)}} ~x^{1/2} \right). \nonumber
\end{eqnarray}
By Proposition \ref{firstle} and Proposition \ref{secle}, we know that
\begin{eqnarray*}
\sum_{n \leq x} \lambda_{\tilde{f}}^2(n) \log\left(\frac{x}{n}\right) 
&=&
\frac{6}{\pi^2} \prod_{ p |N \atop \text{ prime }} (1 + \frac{1}{p})^{-1} ~x
~+~  
O \left( (kN)^{\frac{5}{4} + \Delta}
~.~N^{\frac{\Delta(1 + 4\Delta )}{4(1+ 2\Delta)}}.~\sqrt{N}\right. \\
&&
\phantom{mmmmmmmmmmmmmmmmmm}
\left.  
.~e^{(c_3 ~\frac{\log(N+1)}{\log\log(N+2)})}
~. ~\log k~.~x^{3/4} \right) \\
\sum_{n \leq x} \lambda_{\tilde{f}}^2(n)  
&=&
\frac{6}{\pi^2} \prod_{ p |N \atop \text{ prime }} (1 + \frac{1}{p})^{-1} ~x 
~+~
\phantom{mm}  
O_{\epsilon} \left( (kN)^{\frac{3}{2} + \frac{\Delta}{2}}
~.~N^{\frac{\Delta(1 + \Delta)}{2(1+ 2\Delta)}} 
~.~\sqrt{N}~.~\log k \right. \\
&&
\phantom{mmmmmmmmmmmmmmmmmm}
\left. .~e^{(c_4 \frac{\log(N+1)}{\log\log(N+2)})}.
~x^{\frac{3}{4}(1 + \Delta) + \epsilon}
\right),
\end{eqnarray*}
where $c_3 >0, c_4 >0$ are absolute constants
and $\Delta = \frac{1}{100}$.
Thus by using the above identities, we get
\begin{eqnarray*}
&&
\sum_{ x < n \leq x+h} \lambda_{\tilde{f}}^2(n) \log^2\left(\frac{x + h}{n}\right) \\
&=&
 \frac{12}{\pi^2} \prod_{ p |N \atop \text{ prime }} (1 + \frac{1}{p})^{-1} ~h
~-~
\log^2\left(1+ \frac{h}{x}\right)\sum_{n \leq x} \lambda_{\tilde{f}}^2(n)  
~-~~~
2\log\left(1 + \frac{h}{x}\right)\sum_{n \leq x} \lambda_{\tilde{f}}^2(n)
 \log\left(\frac{x}{n}\right) \\
 &&
 \phantom{mmm}
 ~+~~
 \phantom{m}
 O \left( k^{3/2} .~N^2 .~\log^5k.~\sqrt{\Phi_k(N)}.
 ~e^{\bar{c}_1 \frac{\log (N+1)}{\log\log (N+2)}} 
 .~x^{1/2} \right)\\
&=& 
\frac{2}{\pi^2} \prod_{ p |N \atop \text{ prime }} 
(1 + \frac{1}{p})^{-1} ~\frac{h^3}{x^2}
~+~
O_{\epsilon} \left( (kN)^{\frac{3}{2} + \frac{\Delta}{2}}~
.~N^{\frac{\Delta(1 + \Delta)}{2(1+ 2\Delta)}} 
.~\sqrt{N \Phi_k(N)}.~\log^5k.
~ e^{(\tilde{c}_5 \frac{\log(N+1)}{\log\log(N+2)})}~.
~x^{5/7 + 4\epsilon}   \right),
\end{eqnarray*}
where the last equality follows from the identity
$$
\log ( 1 + \frac{h}{x}) ~=~ \frac{h}{x} ~-~ \frac{h^2}{2x^2} 
~+~ \frac{h^3}{3x^3} ~+~ O(\frac{h^4}{x^4})
$$
for $x \gg 1$ and $\tilde{c}_5 >0$ is an absolute constant. 
Thus we have
\begin{equation}\label{RS}
\sum_{ x < n \leq x+h} \lambda_{\tilde{f}}^2(n) 
\log^2\left(\frac{x + h}{n}\right)
~~ \gg~ \frac{h^3}{x^2 \log\log (N+2)}~,
\end{equation}
where the constant in $\gg$ is absolute and 
$$
x ~\gg_{\epsilon}~ \left( (kN)^{\frac{3}{2} + \frac{\Delta}{2}}~
.~N^{\frac{\Delta(1 + \Delta)}{2(1+ 2\Delta)}} 
.~\sqrt{N \Phi_k(N)}.~\log^5k.
~ e^{(c_5 \frac{\log(N+1)}{\log\log(N+2)})}\right)^{\frac{14}{1-14\epsilon}},
$$
where $c_5 > 0$ is an absolute constant.
From \eqref{last}, we get 
\begin{eqnarray*}
\sum_{ x < n \leq x+h} \lambda_{\tilde{f}}(n)^3 \log^2\left(\frac{x + h}{n}\right)
\ll A(k, N).  ~x \log^9 x,
\end{eqnarray*}
where 
$$
A(k, N) :=  k^{3/2}.~N^3.~ \log^{5/2}k.~ e^{c_6' ~\frac{\log(N+1)}{\log\log(N+2)}}
$$ 
and the constant $c_6' >0$ and the constant in $\ll$ are absolute. 
Now from page 540, equation (8.7) of \cite{CK} and using
Proposition \ref{thle}, we get
\begin{eqnarray*}
|\sum_{ x < n \le x+h} \lambda_{\tilde{f}}(n) \log^2\left(\frac{x +h}{n}\right) |
&=& 
|\sum_{ n \le x+h} \lambda_{\tilde{f}}(n) \log^2\left(\frac{x + h}{n}\right)
~-~ \sum_{ n \le x} \lambda_{\tilde{f}}(n) \log^2\left(\frac{x}{n}(1 + \frac{h}{x})\right)| \\
&\ll&  
 A(k, N).~ x^{1/2} ~+~ 
 | 2 \log(1+ \frac{h}{x})~ \sum_{ n \le x} \lambda_{\tilde{f}}(n) \log\left(\frac{x}{n}\right)|  \\
&& 
\phantom{mm}
~+~
|\log^2(1 + \frac{h}{x}) ~\sum_{ n \le x} \lambda_{\tilde{f}}(n)|  \\
&\ll&  
k^{3/2}.~N^3.~ \log^{5/2}k.
~ e^{\tilde{c}_6 ~\frac{\log(N+1)}{\log\log(N+2)}}.~ x^{1/2} ,
\end{eqnarray*}
where the constant $\tilde{c}_6 > 0$ and 
the constant in $\ll$ are absolute.
Therefore an upper bound for the right hand side of
the inequality \eqref{imp} is 
$\ll_{\epsilon} x^{3/4 + \epsilon}$
which contradicts the lower bound in \eqref{RS}
when 
\begin{eqnarray*}
x 
&\gg_{\epsilon}& 
\text{  max }\left\{ \left[k^{3/2}.~N^3.~ 
\log^{5/2}k.~ e^{c_6 ~\frac{\log(N+1)}{\log\log(N+2)}}
\right]^{\frac{28}{1+ 56\epsilon}},  \right. \\
&&
\phantom{mmm}
\left.
~~\left[ (kN)^{\frac{3}{2} + \frac{\Delta}{2}}
~.~N^{\frac{\Delta(1 + \Delta)}{2(1+ 2\Delta)}} 
.~\sqrt{N \Phi_k(N)}.~\log^5k.
~ e^{(c_5 \frac{\log(N+1)}{\log\log(N+2)})}~\right]^{\frac{14}{1-14\epsilon}}
\right\},
\end{eqnarray*}
where $c_5, c_6> 0$ are absolute constants.
Note that for any sufficiently small $\eta$, one knows
that $\log \Phi_k(N)  \le (1.7 + \eta) \log N$ if $N \ge k \ge 16$
and  $\log \Phi_k(N)  \le (1.7 + \eta) \log k$
if $k \ge N \ge 16$. Hence for any $0 < \epsilon < 1/200$,
one has
\begin{eqnarray*}
&\text{  max }&\left\{ \left[k^{3/2}.~N^3.~ 
\log^{5/2}k.~ e^{c_6 ~\frac{\log(N+1)}{\log\log(N+2)}}
\right]^{\frac{28}{1+ 56\epsilon}},  \right.  \phantom{mmmm}\\
&&
\phantom{mmm}
\left.
~~\left[ (kN)^{\frac{3}{2} + \frac{\Delta}{2}}
~.~N^{\frac{\Delta(1 + \Delta)}{2(1+ 2\Delta)}} 
.~\sqrt{N \Phi_k(N)}.~\log^5k.
~ e^{(c_5 \frac{\log(N+1)}{\log\log(N+2)})}~\right]^{\frac{14}{1-14\epsilon}}
\right\} \\
\phantom{mm}
&\ll&
k^{42}.~N^{84}.~\log^{80}k.~e^{(c_7 \frac{\log(N+1)}{\log\log(N+2)})},
\end{eqnarray*}
where $c_7 > 0$ is an absolute constant.
Thus there exists $n_1 \in (x, x+h]$ such that
$\lambda_{\tilde{f}}(n_1) < 0$ and therefore
$\beta(n_1) <0$. This concludes the
proof of the theorem.

\smallskip

\section{\large Proof of Theorem \ref{thmtwo}}

\smallskip

Let the notations be as in Theorem \ref{thmone}, $\epsilon > 0$
and
\begin{eqnarray*}
x 
&\gg_{\epsilon}& 
k^{42}.~N^{84}.~\log^{80}k.~e^{(c_7 \frac{\log(N+1)}{\log\log(N+2)})},
\end{eqnarray*}
where $c_7 > 0$ is an absolute constant.
Also let
$\epsilon_1 := \epsilon/4$ and $h_1 := x^{13/14 + \epsilon_1}$.
Using Theorem \ref{thmthree} on $\chi_{\alpha}$, 
we get $n_1, n_2 \in (x, x + h_1]$ such that
$B(n_1) > 0$ and $B(n_2) < 0$. Then
arguing as in the proof of Theorem \ref{thmone},
there exist a pair $(n, r)$ such that
$c(n, r) < 0$ where $n= n_1$ or $n=n_2$ and $r=r_1$
or $r= r_2$.
Since
$$
c(n, r) = a( \begin{pmatrix} n &  r /2  \\   r /2 &  m_0 \end{pmatrix}),
$$
we have 
$$
a( \begin{pmatrix} n &  r /2  \\   r /2 &  m_0 \end{pmatrix}) < 0.
$$
Now 
$$
\text{ tr } (\begin{pmatrix} n &  r /2  \\   r /2 &  m_0 \end{pmatrix})
~=~  n + m_0
$$
with $x < n + m_0 \le x + h_1 + c(k, N)$,
where 
$$
c(k, N) := \frac{4}{3\sqrt{3} \pi} kN^3 
\prod_{p|N \atop p \text{ prime }}(1 + 1/p)(1+ 1/p^2).
$$ 
Hence 
$$
n + m_0  \le x + 2h _1 \le x + h.
$$ 
This completes the proof of Theorem \ref{thmtwo}.

\bigskip
\noindent
{\bf Acknowledgements.} We would like to thank S. B\"oecherer
and W. Kohnen for providing us certain references. We also
thank C. Poor for providing us the bound in equation \eqref{bound}.
The authors would like to thank the referee for his/her
suggestions which improved the exposition of the paper and also
for pointing out an inaccuracy in the previous version of the paper.
We would also like to thank R. Balasubramanian, Biplab Paul
and Purusottam Rath for going through
an earlier version of the paper. 
Part of the work was done when the first author was 
visiting ICTP as a regular associate and would
like to thank ICTP for the excellent working facilities.


\begin{thebibliography}{100}

\bibitem{CK}
Y. Choie and W. Kohnen, 
{\em The first sign change of Fourier 
coefficients of cusp forms},
Amer. J. Math. {\bf 131} (2009), no. 2, 517--543.


\bibitem{CGK}
Y-J Choie, S. Gun and W. Kohnen, 
{\em An explicit bound for the first sign 
change of the Fourier coefficients},
Int. Math. Res. Not. IMRN {\bf 12} (2015), 
3782--3792. 


\bibitem{EZ}
M. Eichler and D. Zagier, 
The theory of Jacobi forms, 
Progress in Math. {\bf 55},
{\em Birkh\"auser}, 1985.


\bibitem{GS}
A. Ghosh and P. Sarnak,
{\em Real zeros of holomorphic Hecke cusp forms},
J. Eur. Math. Soc. 14 (2012), 465--487.


\bibitem{IKS}
H. Iwaniec, W. Kohnen and J. Sengupta,
{\em The first negative Hecke eigenvalue},
Int. J. Number Theory {\bf 3} (2007), 355--363.


\bibitem{IK}
H. Iwaniec and E. Kowalski, {\em Analytic Number Theory},
American Mathematical Society, Providence, 
RI, 2004.


\bibitem{SJ}
S. Jesgarz,
{\em Vorzeichenwechsel von Fourierkoeffizienten 
von Siegelschen Spitzenformen},
Diploma Thesis (unpublished), 
Univ. of Heidelberg 2008.


\bibitem{KKP}
M. Knopp, W. Kohnen and W. Pribitkin,
{\em On the signs of Fourier coefficients of
cusp forms}, Ramanujan J. {\bf 7} (2003), no. 1-3, 
269--277.


\bibitem{WK}
W. Kohnen, {\em Sign changes of Hecke 
eigenvalues of Siegel
cusp forms of genus two}, 
Proc. Amer. Math. Soc. {\bf 135} (2007), 
997--999.


\bibitem{KS}
W. Kohnen and J. Sengupta,
{\em On the first sign change of Hecke 
eigenvalues of newforms},
Math. Z. {\bf 254} (2006), no. 1, 173--184.

\bibitem{KS1}
W.  Kohnen and J. Sengupta,  {\em The first negative Hecke 
eigenvalue of a  Siegel cusp form of genus two},  Acta Arith. {\bf 129} 
(2007), no. 1, 53--62.
 
 
\bibitem{KLS}
W. Kohnen,  Y. Lau and I. Shparlinski, 
{\em On the number of sign changes of 
Hecke eigenvalues of newforms},
J. Aust. Math. Soc. {\bf 85} (2008), 
no. 1, 87--94. 


\bibitem{WL}
W-C Winnie Li, {\em L-series of Rankin type and their 
functional equations}, Math. Ann. {\bf 244} (1979), 
no. 2, 135--166.


\bibitem{AO}
A. P. Ogg, {\em On a convolution of L-series},
Invent. Math. {\bf 7} (1969), 297--312. 


\bibitem{KM}
 K. Matom\"aki,
{\em On signs of Fourier coefficients of cusp forms}, 
Math. Proc. Camb. Phil. Soc. {\bf 152} (2012), 
2007--2022.


\bibitem{RM1}
M.R. Murty, 
{\em Oscillations of Fourier coefficients of 
modular forms},  Math. Annalen {\bf 262} (1983), no 4
431--446. 


\bibitem{RM}
M. Ram Murty,  
Problems in analytic number theory,
Graduate Texts in Mathematics {\bf 206}, 
{\em Springer-Verlag}, New York, 2001.


\bibitem{RR}
R. A. Rankin, 
{\em Contributions to the theory of Ramanujan's 
function $\tau(n)$ and similar arithmetical functions}, 
Proc. Cambridge Philos. Soc. {\bf 35} (1939) 351--372.


\bibitem{PY}
C. Poor and D.S. Yuen,
{\em Dimensions of cusp forms for 
$\Gamma_0(p)$ in degree two
and small weights}, Abh. Math. Sem. Univ. 
Hamburg {\bf 77}
 (2007), 59--80.


\bibitem{PY1}
C. Poor and D. Yuen, 
{\em Paramodular cusp forms} 
Math. Comp. {\bf 84} (2015), no. 293, 
1401--1438. 


\bibitem{HR}
H. Rademacher, {\em On the Phragm\'en-Lindel\"of theorem and some 
applications}, Math. Z ~{\bf 72} (1959), 192--204.


\bibitem{GR}
G. Robin, {\em Estimation de la fonction de Tchebychef $\theta$ 
sur le k-i\'eme nombre premier et grandes valeurs de la 
fonction $\omega(n)$ nombre de diviseurs 
premiers de n},  Acta Arith. {\bf 42} (1983), 367--389. 


\bibitem{GT}
G. Tenenbaum, {\em Introduction to analytic and probabilistic 
number theory},
Cambridge Studies in Advanced Mathematics 46,
Cambridge University Press, Cambridge, 1995.


\end{thebibliography}
\end{document}